\newenvironment{acknowledgement}{%
	\par\addvspace{12pt}\small\itshape
}{%
	\par\addvspace{12pt}
}
\newtheorem{theorem}{Theorem}[section]
\newtheorem{lemma}[theorem]{Lemma}
\newtheorem{proposition}[theorem]{Proposition}
\newtheorem{corollary}[theorem]{Corollary}
\theoremstyle{definition}
\newtheorem{definition}[theorem]{Definition}
\newtheorem{example}[theorem]{Example}
\newtheorem{problem}[theorem]{Problem}
\theoremstyle{remark}
\newtheorem{remark}[theorem]{Remark}
\newcommand{\dom}{\mbox{\rm dom}}
\newcommand{\range}{\mbox{\rm range}}
\newcommand{\ZFC}{\mathsf{ZFC}}
\newcommand{\diam}{\mbox{\rm diam}}
\newcommand{\full}{\mbox{\rm full}}
\numberwithin{equation}{section}
\begin{document}
\title[an order analysis of hyperfinite Borel equivalence relations]{An order analysis of hyperfinite Borel equivalence relations}


\author{Su Gao}
\address{School of Mathematical Sciences and LPMC, Nankai University, Tianjin 300071, P.R. China}
\email{sgao@nankai.edu.cn}
\thanks{The authors acknowledge the partial support of their research by the National Natural Science Foundation of
China (NSFC) grants 12250710128 and 12271263.
}

\author{Ming Xiao}
\address{School of Mathematical Sciences and LPMC, Nankai University, Tianjin 300071, P.R. China}
\email{ming.xiao@nankai.edu.cn}

\subjclass[2020]{Primary 03E15; Secondary 54H05, 03D65}

\date{\today}

\begin{abstract} In this paper we first consider hyperfinite Borel equivalence relations with a pair of Borel $\mathbb{Z}$-orderings. We define a notion of compatibility between such pairs, and prove a dichotomy theorem which characterizes exactly when a pair of Borel $\mathbb{Z}$-orderings are compatible with each other. We show that, if a pair of Borel $\mathbb{Z}$-orderings are incompatible, then a canonical incompatible pair of Borel $\mathbb{Z}$-orderings of $E_0$ can be Borel embedded into the given pair. We then consider hyperfinite-over-finite equivalence relations, which are countable Borel equivalence relations admitting Borel $\mathbb{Z}^2$-orderings. We show that if a hyperfinite-over-hyperfinite equivalence relation $E$ admits a Borel $\mathbb{Z}^2$-ordering which is self-compatible, then $E$ is hyperfinite.
\end{abstract}

\maketitle

\section{Introduction}

This paper is a contribution to the study of hyperfinite Borel equivalence relations and, more generally, countable Borel equivalence relations which are conjectured to be hyperfinite. Hyperfinite Borel equivalence relations have been studied extensively by many researchers, first in the context of ergodic theory and operator algebras (see e.g. \cite{CFW} and \cite{E1}), and later in the context of descriptive set theory (see e.g. \cite{Weiss}, \cite{SlSt} and \cite{DJK}). Despite an extensive literature, some problems about hyperfinite Borel equivalence relations are stubbornly open. One of the most well-known open problems in this area is Weiss's question (\cite{Weiss}) of whether any orbit equivalence relation induced by a Borel action of a countable amenable group is hyperfinite.


By definition, an hyperfinite Borel equivalence relation is an increasing union of a sequence of Borel equivalence relations with finite equivalence classes. In \cite{SlSt}, an equivalent formulation of hyperfiniteness is given: they are precisely those Borel equivalence relations for which there exists a Borel assignment of a linear ordering on each equivalence class so that the order type is a suborder of $\mathbb{Z}$. 

These equivalent formulations of hyperfiniteness point to two different directions for generalizations. In \cite{DJK}, the class of Borel equivalence relations which are increasing unions of sequences of hyperfinite Borel equivalence relations are introduced. They have been extensively studied since (see also \cite{BJ07}, \cite{JKL2002}). This class of Borel equivalence relations are called {\em hyper-hyperfinite} Borel equivalence relations in some recent literature (e.g. \cite{FriShinVid}). We also adopt this terminology in this work. Whether hyper-hyperfinite Borel equivalence relations are hyperfinite or not is a major open problem in this field, called ``The Union Problem". Another class of Borel equivalence relations, which we call  {\em hyperfinite-over-hyperfinite}, are defined as those Borel equivalence relations allowing an Borel assignment of linear orderings on each equivalence class so that the order type is a suborder of the lexicographic order of $\mathbb{Z}^2$.


We do not know whether the two notions are equivalent to each other. The question whether any hyper-hyperfinite equivalence relation is hyperfinite is a major open problem in the area, known as the Union Problem (see \cite{DJK}). Kechris (\cite{JKL2002}) defined a notion of Fr{\'e}chet-amenable equivalence relations (this is obviously motivated by and related to Weiss's question) and proved that both hyper-hyperfinite equivalence relations and hyperfinite-over-hyperfinite equivalence relations are Fr{\'e}chet-amenable. We are going to give a more unified characterization of these two properties in Section \ref{prel} and address some related problems there.

What we study in this paper can be regarded as a small part of the more general study on structurable equivalence relations (see the recent paper by Chen and Kechris \cite{CK}). It is notable that Marks proved that any aperiodic countable Borel equivalence relation admits a Borel assignment of a linear ordering on each equivalence class so that the order type is exactly $\mathbb{Q}$ (see \cite[Theorems 1.11 and 8.17]{CK}).

In this paper we study hyperfinite-over-hyperfinite equivalence relations and prove that, under certain conditions, they are hyperfinite. The condition is on the Borel $\mathbb{Z}^2$-ordering of the equivalence relation and is called self-compatible (see Section \ref{hfovhfer} for the definition). Thus our main theorem is stated as follows.

\begin{theorem}\label{thm:main1} If $E$ is a hyperfinite-over-hyperfinite equivalence relation with a Borel $\mathbb{Z}^2$-ordering which is self-compatible, then $E$ is hyperfinite.
\end{theorem}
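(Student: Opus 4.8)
The plan is to show that a self-compatible Borel $\mathbb{Z}^2$-ordering of $E$ can be straightened into a Borel $\mathbb{Z}$-ordering, i.e.\ that the two ``coordinate'' layers of the lexicographic $\mathbb{Z}^2$-ordering can be amalgamated into a single $\mathbb{Z}$-ordering. Concretely, let $E$ carry a Borel assignment $x \mapsto <_x$ of a linear ordering on $[x]_E$ whose order type embeds into $\mathrm{lex}(\mathbb{Z}^2)$. This induces two equivalence sub-relations: the relation $F$ where $y\mathrel{F}z$ iff $y,z$ lie in the same ``column'' (same first $\mathbb{Z}$-coordinate), which is finite-to-one blocks inside each class but whose classes embed into $\mathbb{Z}$ (so $F$ is hyperfinite), and the quotient structure on columns, which is itself ordered like a suborder of $\mathbb{Z}$. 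The self-compatibility hypothesis — which, by the discussion in Section~\ref{hfovhfer}, compares the ``first-coordinate'' $\mathbb{Z}$-ordering read off from one column-block against that read off from an adjacent column-block — is exactly what rules out the obstruction that would otherwise force $E \supseteq E_0 \times$ (something non-hyperfinite). So the first step is to unpack self-compatibility into a statement about how the successor/predecessor columns align, and to record that it is a Borel condition preserved under the constructions below.

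Next I would run the standard ``finite-to-one on a hyperfinite base'' argument in reverse. Let $B \subseteq X$ be a Borel transversal-like set picking, in a Borel way, one representative from each $F$-class (a ``column selector''); this exists because $F$ is smooth on each $E$-class modulo the $\mathbb{Z}$-structure, or at worst one works with a Borel $\mathbb{Z}$-ordering of the column space directly. Restricting $<_x$ to $B\cap[x]_E$ gives a Borel $\mathbb{Z}$-ordering of $E\restriction B$, so $E\restriction B$ is hyperfinite. Now I need to glue the finite column-blocks back on without destroying hyperfiniteness: the key is that, by self-compatibility, there is a Borel, uniformly bounded matching between the elements of a column and the elements of its successor column that is order-respecting ``in the limit,'' which lets one reindex all of $[x]_E$ by $\mathbb{Z}$ itself. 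Here is where I would invoke the dichotomy/embedding results alluded to in the abstract for pairs of Borel $\mathbb{Z}$-orderings: compatibility of the pair of $\mathbb{Z}$-orderings (the one from $<_x$ restricted to a column versus the one inherited from the global first coordinate) is precisely what allows a common Borel $\mathbb{Z}$-refinement, and incompatibility would embed the canonical $E_0$-witness, contradicting self-compatibility.

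The main obstacle, as I see it, is the uniformity and Borelness of the gluing: producing a single Borel function that simultaneously, on every $E$-class, fuses the column-wise order and the between-column order into an order of type $\leq \mathbb{Z}$, rather than merely doing so on each class by a non-uniform choice. Self-compatibility should be the hypothesis that converts a class-by-class fusion into a Borel one — intuitively it says the ``defect'' between the two orderings is eventually constant in a way that can be measured by a Borel $\mathbb{Z}$-valued cocycle — but verifying that the resulting cocycle is a coboundary (so that the orderings genuinely align) will require the compatibility dichotomy from the first part of the paper, plus a Lusin–Novikov style argument to keep everything Borel and countable-to-one. I would expect the proof to conclude by exhibiting a Borel reduction of $E$ to $E_0$, or equivalently a Borel $\mathbb{Z}$-ordering of $E$, from which hyperfiniteness follows by the Slaman–Steinhorn characterization quoted in the introduction.

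Let me also flag a subtlety I would need to handle: $E$-classes come in different ``shapes'' (finite; one-sided infinite; $\mathbb{Z}$-like in one coordinate and finite in the other; genuinely $\mathbb{Z}^2$-like), and the argument above is cleanest on the generic $\mathbb{Z}^2$-like part. On a Borel-invariant basis I would split $X$ according to class type — each piece is Borel-invariant — dispose of the lower-complexity pieces directly (they are visibly hyperfinite), and run the cocycle argument on the top piece, finally reassembling by the closure of hyperfiniteness under countable increasing unions and Borel-invariant partitions.
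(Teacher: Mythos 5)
There is a genuine gap, and it sits at the very first concrete step of your construction. You assert that one can choose, in a Borel way, one representative from each $F$-class (your ``column selector'' $B$), justified by ``$F$ is smooth on each $E$-class modulo the $\mathbb{Z}$-structure.'' No such Borel transversal exists in general: the relation $F$ (finite $\prec$-intervals) is a hyperfinite but typically non-smooth equivalence relation, and the existence of a Borel transversal is exactly smoothness of $F$. Had such a $B$ existed, the theorem would be immediate \emph{without} any self-compatibility hypothesis: $\prec$ restricted to the transversal is a class-wise $\mathbb{Z}$-ordering, so $E\upharpoonright B$ is hyperfinite, and $E$ would be hyperfinite by \cite[Proposition 5.2(4)]{DJK} --- i.e.\ your argument, if it worked, would settle the full Hyperfinite-over-Hyperfinite Problem. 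The entire difficulty is that only complete sections of $F$, not transversals, are available; what self-compatibility buys (via its definition together with Proposition~\ref{prop:completesection}) is an $F$-monotonic, class-wise large Borel complete section $B\subseteq\dom(\gamma)$, which is a much weaker object than the one you posit.

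The second half of your plan also does not go through as stated: the ``uniformly bounded, order-respecting matching between a column and its successor'' and the claim that the resulting $\mathbb{Z}$-valued cocycle is a coboundary are asserted rather than derived, and the dichotomy theorem does not supply a ``common Borel $\mathbb{Z}$-refinement'' of a compatible pair --- indeed the paper's proof of this theorem never invokes the dichotomy; compatibility is used only through the monotonic complete section it provides. The actual argument is structurally different: after discarding the non-full part and the classes where $<\upharpoonright([x]_F\cap B)$ or $<'\upharpoonright([x]_F\cap B)$ is not of type $\mathbb{Z}$ (handled by Borel selectors), one studies the Borel map $\phi(x)=(x,(\gamma\circ\eta)(x),(\gamma\circ\eta)^2(x),\dots)$ into the tail equivalence relation $E_t$; the piece where $\phi$ is a reduction is hyperfinite by Theorem~\ref{thm:hyp}(iii); on the remaining piece one builds an injection $\alpha$ on a section $S$, treats the case where $\alpha$ is a bijection by exhibiting a Borel $\mathbb{Z}^2$-action and quoting Weiss's theorem, treats the general increasing case by a counting argument along the inverse map $\psi$, and finally reduces the order-reversing case to the increasing one by a doubling trick on $X_3\times\{0,1\}$. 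None of these steps is visible in, or recoverable from, your outline, so the proposal does not constitute a proof of the theorem.
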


More generally, the compatibility condition is between two Borel linear orderings on the equivalence classes. Before we prove our main theorem above, we give an analysis of hyperfinite Borel equivalence relations with two Borel $\mathbb{Z}$-orderings and characterize exactly when they are compatible with each other. We show that, if a pair of Borel $\mathbb{Z}$-orderings are incompatible, then there is a Borel embedding of a canonical pair of incompatible Borel $\mathbb{Z}$-orderings of $E_0$ into the given pair. Thus we obtain the following dichotomy theorem.

\begin{theorem}\label{thm:main2} There is a pair $(<_0,<_1)$ of Borel $\mathbb{Z}$-orderings of $E_0$ such that, for any hyperfinite Borel equivalence relation $E$ on a standard Borel space $X$ and a pair $(<, <')$ of Borel $\mathbb{Z}$-orderings of $E$, exactly one of the following holds:
\begin{enumerate}
\item[(I)] $<$ and $<'$ are compatible, or
\item[(II)] There is a Borel embedding $\theta: 2^\omega\to X$ witnessing $E_0\sqsubseteq_B E$ such that $\theta$ is order-preserving from $(<_0,<_1)$ to $(<,<')$.
\end{enumerate}
\end{theorem}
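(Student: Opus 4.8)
The plan is to prove this as a Glimm--Effros (Harrington--Kechris--Louveau) type dichotomy, running the standard $E_0$ machinery while carrying the two orderings along at every step.

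That (I) and (II) are mutually exclusive is the easy half. The canonical pair $(<_0,<_1)$ is incompatible on $E_0$ by construction, and compatibility is inherited under order-preserving Borel embeddings: if $\theta$ were as in (II) while $(<,<')$ were compatible, then the compatibility data for $(<,<')$, restricted to the equivalence relation induced on $\theta(2^\omega)$ (or on its $E$-saturation) and transported back along $\theta$, would witness compatibility of $(<_0,<_1)$, which is absurd. So at most one alternative holds, and the content of the theorem is the implication ``not (I) $\Rightarrow$ (II)''.

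Assume, then, that $<$ and $<'$ are not compatible. The first step is to turn this ``global'' failure into a concrete finitary obstruction. Classes that are finite or one-sided infinite are benign and may be set aside; incompatibility is a phenomenon of the genuinely $\mathbb{Z}$-ordered classes, and unpacking the definition of compatibility from the previous section, the failure of (I) should yield a $\Sigma^1_1$ set $Y \subseteq X$ --- which may be taken Borel after refining the topology --- such that $E \restriction Y$ is non-smooth and, moreover, every point of $Y$ ``sees arbitrarily large disagreement'': for each $n$ there are points $E$-equivalent to it delimiting an $<$-interval and an $<'$-interval that are interleaved exactly as an initial segment of the canonical pattern on $E_0$ is interleaved. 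Isolating precisely this configuration --- a scheme of finite order-configurations that plays, for the present problem, the role the graph $G_0$ plays in the classical argument, and that is simultaneously forced by incompatibility and realizable as a fragment of $(<_0,<_1)$ --- is the conceptual heart of the proof, and the place where the definition of compatibility does the real work.

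With such a $Y$ fixed, the second step is a Cantor-scheme (fusion) construction of $\theta$. Recursively assign to each $s \in 2^{<\omega}$ a nonempty ``small'' set $U_s \subseteq Y$ with $U_{s0},U_{s1} \subseteq U_s$ (where $s0,s1$ denote one-bit extensions), together with explicitly chosen points realizing, on a common $E$-class, the successor relations of both $<$ and $<'$ in the canonical pattern; nonemptiness at each stage is maintained by working inside the Gandy--Harrington topology, or equivalently by repeated appeals to the classical $E_0$-dichotomy along the way. The requirements threaded through the recursion are: (i) $\diam(U_s) \to 0$, so that $\theta(x) := \bigcap_n U_{x \restriction n}$ is a well-defined continuous injection $2^\omega \to X$; (ii) images of $E_0$-inequivalent points lie on $E$-separated pieces while images of $E_0$-equivalent points lie on a single $E$-class via the disagreement pattern, so that $x \mathrel{E_0} y \Leftrightarrow \theta(x) \mathrel{E} \theta(y)$; and (iii) at every successor step the chosen points make $\theta$ carry $<_0$ to $<$ and $<_1$ to $<'$, which is possible because the robust pattern supplied by the first step always provides suitable points with ``room'' to spare. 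A routine verification then shows $\theta$ is the desired Borel --- indeed continuous --- order-preserving embedding, establishing (II). The main obstacle lies here: unlike the classical dichotomy, where a single successor structure is interleaved, each extension of the scheme must simultaneously respect the $<$-pattern and the $<'$-pattern, keep enough unused room on each class for the recursion to continue, and hold the images of distinct classes apart, so the bookkeeping is substantially heavier --- and arranging it to match the canonical pair $(<_0,<_1)$ exactly, rather than merely some incompatible pair, is the delicate point.
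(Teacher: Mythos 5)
Your outline follows the same general route as the paper (a Harrington--Kechris--Louveau style argument via the Gandy--Harrington topology, carrying the two orders through a fusion on $2^{<\omega}$), but at exactly the two places you yourself flag as ``the conceptual heart'' and ``the delicate point,'' the proposal asserts rather than proves, and these are genuine gaps. Even the ``easy half'' is not done: a Borel $E$-monotonic complete section for $(<,<')$ need not meet $\theta(2^\omega)$ at all, so ``restricting the compatibility data to the image and transporting it back'' does not produce a witness for $(<_0,<_1)$. What is needed is the nontrivial refinement fact that \emph{inside every} Borel complete section one can find an $E$-monotonic complete section (and in fact a class-wise large one); the paper isolates this as an equivalence (Proposition~\ref{prop:completesection}) and its proof is not a formality --- it uses a class-wise distance function built from the section, a locally finite Borel graph, and a Borel maximal anticlique. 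Without some version of this, mutual exclusivity of (I) and (II) is not established.

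For ``not (I) $\Rightarrow$ (II)'' the real content is twofold, and neither piece appears in your sketch. First, failure of (I) is a global nonexistence statement (no Borel monotonic complete section); converting it into a pointwise ``bad set'' $Y$ requires machinery, not just ``should yield'': the paper takes $Y$ to be the set of points lying in no $\Delta^1_1$ monotonic set and uses the First Reflection Theorem to conclude that $Y$ contains no nonempty $\Sigma^1_1$ monotonic subset --- this is the property actually used later, not an ``arbitrarily large interleaved disagreement'' condition, which you neither define precisely nor derive. Second, the crux of the fusion is the claim that at each level one can find $x$ in one condition and $y$ in the other with $x$ entirely $<$-below $y$ while the relevant $<'$-extreme coordinates are ordered the canonical (alternating) way; your proposal simply says the ``robust pattern'' provides such points ``with room to spare.'' The paper proves this by contradiction: if no such pair exists, the $\Pi^1_1$-on-$\Sigma^1_1$ statement reflects to $\Delta^1_1$ supersets, which are then shrunk by minimizing a class-wise diameter and passing to a Borel maximal anticlique of a locally finite graph, yielding a nonempty $\Sigma^1_1$ $E$-monotonic subset of $Y$ --- contradicting the choice of $Y$. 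In addition, your requirement (ii) (that $E_0$-inequivalent points go to $E$-inequivalent points) is named but not secured; the paper needs the HKL facts that $E$ is dense and meager in $\bar{E}$ relative to $Y\times Y$, together with the level-structured forcings $\mathsf{P}_{2^n}$, projection maps, and a genericity lemma guaranteeing pairwise generic images. As it stands, the proposal is a plan that correctly identifies where the difficulties lie but does not overcome them.
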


The theorem is proved using Gandy--Harrington forcing. As usual, the technical theorem is an effective version of the main dichotomy theorem in which all objects are $\Delta^1_1$. Some part of our proof is motivated by results of Kanovei \cite{Kanovei1997}, who defined the partial order $<_0$ and considered Borel reductions from $E_0$ to some $E$ which is order-preserving from $<_0$ to some $<$.




The rest of the paper is organized as follows. In Section 2, we review some basic  concepts and facts. In Section 3 we define the notion of compatible pairs of Borel $\mathbb{Z}$-orderings, state the main dichotomy theorem again, and prove some basic facts. In Section 4 we prove an effective version of the main dichotomy theorem (the technical theorem). In Section 5 we turn to hyperfinite-over-hyperfinite equivalence relations, and formulate and prove Theorem~\ref{thm:main1}.
\begin{acknowledgement}
	Acknowledgement. We would like to express our gratitude to the anonymous referee for the detailed feedback and helpful suggestions.
\end{acknowledgement}

\section{Preliminaries}\label{prel}
The standard notions of descriptive set theory we use in this paper can be found in, e.g., \cite{Moschovakis}, \cite{Kechris1995} and \cite{GaoBook}.

A {\em Polish space} is a separable completely metrizable topological space. If $X$ is a Polish space then the collection of {\em Borel sets} on $X$ is the smallest $\sigma$-algebra of subsets of $X$ containing the open sets. A {\em standard Borel space} is a pair $(X, \mathcal{B})$ where $X$ is a set and $\mathcal{B}$ is a $\sigma$-algebra of subsets of $X$ such that $\mathcal{B}$ is the collection of all Borel sets for some Polish topology on $X$.

Let $X$ be a standard Borel space. An equivalence relation $E$ on $X$ is {\em Borel} if $E$ is a Borel subset of $X^2$. Borel partial orders are similarly defined. Given a subset $A\subseteq X$ and an equivalence relation $E$ on $X$, we denote by $[A]_E=\{x\colon \exists y\in A\,(xEy)\}$ the {\em $E$-saturation} of $A$. $A$ is {\em $E$-invariant} if $A=[A]_E$. When $A=\{x\}$ is a singleton, $[A]_E$ is an $E$-equivalence class ({\em $E$-class} for short), and we write $[x]_E$ for $[A]_E$. A subset $A\subseteq X$ is a {\em complete section} if it has nonempty intersection with every $E$-class. We say that $A$ is an {\em infinite complete section} if it has an infinite intersection with every $E$-class.

An equivalence relation $E$ on a standard Borel space $X$ is {\em finite} (or {\em countable}) if every $E$-class is finite (or countable, respectively). $E$ is {\em hyperfinite} if it is an increasing union of a sequence of finite Borel equivalence relations, i.e., $E=\bigcup_n E_n$, where each $E_n$ is a finite Borel equivalence relation, and $E_n\subseteq E_{n+1}$.

Following \cite{Kechris1991}, we define a {\em Borel structuring} of a countable Borel equivalence relation $E$ on a standard Borel space $X$ as follows. Let $\mathcal{L}=\{R_1,\dots, R_n\}$ be a finite relational language, with $k_i$ being the arity of $R_i$. Let $\mathcal{K}$ be a collection of countable $\mathcal{L}$-structures closed under isomorphism. An assignment $C\mapsto \mathcal{M}_C$, which for each $E$-class $C$ gives an $\mathcal{L}$-structure $\mathcal{M}_C=(C, R_1^C,\dots, R_n^C)$ with universe $C$, is a {\em Borel $\mathcal{K}$-structuring} of $E$ if $\mathcal{M}_C\in\mathcal{K}$ for each $E$-class $C$, and the relations
$$ R_i(x,y_1,\dots,y_{k_i})\iff y_1,\dots, y_{k_i}\in[x]_E\mbox{ and } R_i^{\mathcal{M}_{[x]_E}}(y_1,\dots,y_{k_i}) $$
are Borel subsets of $X^{k_i+1}$. In this paper we only consider two special cases, where $\mathcal{K}$ is either the collection of all suborders of $(\mathbb{Z},<)$ or the collection of all suborders of $(\mathbb{Z}^2, <_{\mbox{\scriptsize lex}})$, where $<_{\mbox{\scriptsize lex}}$ is the lexicographic order of $\mathbb{Z}^2$ (we will write only $\mathbb{Z}^2$ whenever the order we refer to is clear throughout this work). 
We call them {\em Borel $\mathbb{Z}$-orderings} and {\em Borel $\mathbb{Z}^2$-orderings} respectively.

If $\mathcal{K}$ is a collection of countable linear orders and $C\mapsto\mathcal{M}_C$ is a Borel $\mathcal{K}$-structuring of $E$, then one can define a (strict) partial order $<_X$ on $X$ by
$$ x<_Xy\iff x<_{[x]_E}y. $$
Obviously $<_X$ is Borel. This motivates an equivalent but somewhat more intuitive concept as follows.

\begin{definition}
Let $X$ be a set and let $<$ be a (strict) partial order on $X$.
$E_<$ is an equivalence relation defined as $xE_<y$ if and only if $x=y$ or there is a sequence $x=x_0,\dots,x_n=y$ such that $x_i$ and $x_{i+1}$ are comparable in $<$ for each $i<n$. In this case, we say that $<$ generates $E_<$.
\end{definition}

In other words, $E_<$ is the transitive closure of the comparability relation with respect to $<$.

\begin{definition}
Let $L$ be a countable linear order, $X$ be a set, $<$ be a partial order on $X$, and $E=E_{<}$. We say that  $<$ is {\em class-wise $L$} if $<\upharpoonright[x]_E$ is isomorphic to a suborder of $L$ for every $x\in X$.
\end{definition}

The following lemma without proof records the fact that the above two approaches are equivalent.

\begin{lemma} Let $X$ be a standard Borel space and let $E$ be a Borel equivalence relation on $X$. Then the following are equivalent:
\begin{enumerate}
\item[(i)] There is a Borel $\mathbb{Z}$-ordering ($\mathbb{Z}^2$-ordering) of $E$.
\item[(ii)] There is a Borel partial order $<$ on $X$ which is class-wise $\mathbb{Z}$ ($\mathbb{Z}^2$, respectively).
\end{enumerate}
\end{lemma}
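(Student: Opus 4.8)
The plan is to verify that the two translations sketched just before the statement are mutually inverse and preserve the relevant structure. In one direction we already know, from the paragraph preceding the lemma, that a Borel $\mathcal{K}$-structuring $C\mapsto\mathcal{M}_C$ gives a Borel partial order $<_X$; conversely, from a Borel partial order we will build a structuring by restricting it to each class. Both directions amount to unwinding definitions, and the only feature of $\mathbb{Z}$ (respectively of $\mathbb{Z}^2$ with the lexicographic order) that we will use is that it is a \emph{linear} order; so the two cases are identical and I describe only $\mathbb{Z}$.

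For (ii) $\Rightarrow$ (i): suppose $<$ is a Borel partial order on $X$ which is class-wise $\mathbb{Z}$; by definition this entails $E=E_<$. For each $E$-class $C$ I set $\mathcal{M}_C=(C,<\upharpoonright C)$. Since $<\upharpoonright C$ is isomorphic to a suborder of $(\mathbb{Z},<)$ and the collection $\mathcal{K}$ of all such suborders is closed under isomorphism, we have $\mathcal{M}_C\in\mathcal{K}$; and the universe of $\mathcal{M}_C$ is $C$, as a structuring requires. The associated relation
$$ R(x,y_1,y_2)\iff y_1,y_2\in[x]_E\ \text{and}\ y_1<y_2 $$
is Borel because $E$ and $<$ are Borel, so $C\mapsto\mathcal{M}_C$ is a Borel $\mathbb{Z}$-ordering of $E$.

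For (i) $\Rightarrow$ (ii): let $C\mapsto\mathcal{M}_C=(C,<_C)$ be a Borel $\mathbb{Z}$-ordering of $E$ and let $<_X$ be the associated Borel partial order, $x<_Xy\iff x<_{[x]_E}y$. I will check three things. First, $<_X$ is a strict partial order: irreflexivity is immediate since each $<_C$ is strict, and transitivity holds because if $x<_Xy$ and $y<_Xz$ then $x,y,z$ share a single $E$-class $C$ (a structuring relates only members of the same class), whence $x<_Cy<_Cz$ and, $<_C$ being linear, $x<_Cz$. Second, $E_{<_X}=E$: comparability in $<_X$ forces $E$-equivalence, so $E_{<_X}\subseteq E$, while inside any $E$-class $C$ the order $<_C$ is linear, so any two elements of $C$ are comparable and $C$ lies in one $E_{<_X}$-class, giving $E\subseteq E_{<_X}$. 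Third, $<_X\upharpoonright[x]_E=\,<_{[x]_E}$ is a suborder of $\mathbb{Z}$, so $<_X$ is class-wise $\mathbb{Z}$.

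There is no genuine obstacle here; the lemma is a bookkeeping statement. The single point to be careful about is that a structuring demands $\mathcal{M}_C$ to have universe exactly $C$ and to belong to $\mathcal{K}$, whereas ``class-wise $\mathbb{Z}$'' only asks $<\upharpoonright C$ to be \emph{isomorphic} to a suborder of $\mathbb{Z}$; this discrepancy is exactly what closure of $\mathcal{K}$ under isomorphism absorbs. The $\mathbb{Z}^2$ case is word-for-word the same, using that the lexicographic order on $\mathbb{Z}^2$ is a linear order.
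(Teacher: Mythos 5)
Your proof is correct; the paper states this lemma without proof precisely because it is the routine bookkeeping you carry out, and your two directions are exactly the intended translation (the (i)$\Rightarrow$(ii) half being the $<_X$ construction already sketched in the paragraph preceding the lemma). Your remark that ``suborders of $(\mathbb{Z},<)$'' must be read as the isomorphism closure, and that only linearity of $\mathbb{Z}$ and $\mathbb{Z}^2_{\mathrm{lex}}$ is used, correctly identifies the only points needing care.
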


In the rest of the paper we work with Borel class-wise $\mathbb{Z}$-orderings and Borel class-wise $\mathbb{Z}^2$-orderings.

\begin{definition} Let $X$ be a Borel equivalence relation and let $E$ be a Borel equivalence relation on $X$.
\begin{enumerate}
\item $E$ is {\em hyper-hyperfinite} if $E$ is the increasing union of a sequence of hyperfinite Borel equivalence relations, i.e., $E=\bigcup_n E_n$, where each $E_n$ is a hyperfinite Borel equivalence relation, and $E_n\subseteq E_{n+1}$.
\item $E$ is {\em hyperfinite-over-hyperfinite} if $E$ admits a Borel $\mathbb{Z}^2$-ordering, or equivalently, there is a Borel class-wise $\mathbb{Z}^2$-ordering on $X$ for $E$.
\end{enumerate}
\end{definition}

The following are some examples of Borel equivalence relations relevant to our study in this paper.
\begin{itemize}
		\item[(a)] $E_0$ is the equivalence relation defined on $2^{\omega}$ by $$xE_0y \iff \exists n<\omega\ \forall m>n\ x(m)=y(m). $$
		\item[(b)] $E_t$ is the equivalence relation defined on $2^{\omega}$ by $$xE_ty \iff \exists n,m<\omega\ \forall k\ x(n+k)=y(m+k).$$
\item[(c)] If $X$ is an uncountable standard Borel space, then $E_t(X)$ is the equivalence relation defined on $X^\omega$ by
    $$ xE_t(X)y\iff \exists n,m<\omega \forall k\ x(n+k)=y(m+k). $$
		\item[(d)] $E_{\mathcal{S}}$ is the equivalence relation defined on $2^{\mathbb{Z}}$ by
$$xE_{\mathcal{S}}y \iff \exists n\in\mathbb{Z}\ \forall k\in\mathbb{Z} \ x(n+k)=y(k). $$

\item[(e)] If $E$ is an equivalence relation over $X$, then $E^{\omega}$ is defined on $X^{\omega}$ by
$$xE^{\omega}y \iff \forall n\ x(n)Ey(n). $$	
	\end{itemize}

The equivalence relations $E_0, E_t, E_{\mathcal{S}}$ are all hyperfinite. Hyperfiniteness can also be characterized in the language of Borel reducibility, as follows.

For Borel equivalence relations $E$ and $F$ on standard Borel spaces $X$ and $Y$ respectively, we say that $E$ is {\em Borel reducible to} $F$, and write $E\leq_B F$, if there is a Borel map $f:X\to Y$ such that
$$ x_1Ex_2\iff f(x_1)Ff(x_2) $$
for all $x_1,x_2\in X$. $f$ is called a {\em Borel reduction} from $E$ to $F$. Moreover, if $f$ can be taken to be injective, then we say that $E$ is {\em Borel embeddable into} $F$, and write $E\sqsubseteq_B F$.

The following are some basic but nontrivial results about hyperfiniteness.
\begin{theorem}\label{thm:hyp} Let $E$ be a Borel equivalence relation on a standard Borel space. Then the following hold:
\begin{enumerate}
	\item[(i)] {\rm (Dougherty--Jackson--Kechris \cite{DJK})} $E$ is hyperfinite if and only if $E\leq_B E_0$.
	\item[(ii)] {\em (Hjorth--Kechris \cite{HK97})} If $E$ is countable, then $E$ is hyperfinite if and only if $E\leq_B E_0^{\omega}$.
\item[(iii)] {\em (Kechris--Louveau \cite{KL}, Dougherty--Jackson--Kechris \cite{DJK})} If $E$ is countable, then for any standard Borel space $X$, $E$ is hyperfinite if and only if $E\leq_B E_t(X)$.
\end{enumerate}
\end{theorem}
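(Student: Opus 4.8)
The statement collects three classical equivalences, so the plan is to split each into an easy implication and a deep one, sketch the easy halves, and identify the theorems that carry the rest. Throughout I take $E$ countable --- a hyperfinite relation has all classes countable (each is a countable increasing union of finite sets), so part~(i) loses nothing by this, while (ii) and (iii) assume it.

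For the forward implications the one substantial ingredient is that every hyperfinite $E$ Borel embeds into $E_0$ (the Dougherty--Jackson--Kechris embedding theorem). I would prove it along the route indicated in the introduction: by \cite{SlSt} the relation $E$ carries a Borel assignment of a $\mathbb{Z}$-ordering to its classes; from it one extracts a vanishing sequence of Borel complete sections that is suitably sparse in the $\mathbb{Z}$-ordering, and codes each point $x$ by the sequence whose $n$-th entry records, in a uniformly bounded way, how $x$ lies between successive marker levels around it, with the membership bits of $x$ in a fixed countable separating family of Borel sets interleaved at sparse coordinates; eventual agreement of codes then matches $E$, since the marker data stabilises along each $E$-class and the separating family sends distinct $E$-classes to distinct $E_0$-classes. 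Granting $E\sqsubseteq_B E_0$, the forward direction of (ii) follows by postcomposing with the embedding $u\mapsto(u,u,u,\dots)$ of $E_0$ into $E_0^\omega$, and that of (iii) by postcomposing with $E_0\le_B E_t\le_B E_t(X)$, where $E_0\le_B E_t$ is classical (e.g.\ $E_t$ is non-smooth, so $E_0\le_B E_t$ by the Glimm--Effros dichotomy) and $E_t\le_B E_t(X)$ is the coordinatewise image of any Borel injection $2\hookrightarrow X$.

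The converse implications are the real content. Part~(i), $\Leftarrow$, is the downward closure of hyperfiniteness under $\le_B$, also from \cite{DJK}, and it is not soft: given $E\le_B E_0$ via $f$, pulling back the finite filtration $E_0=\bigcup_n E_0^{(n)}$ by agreement past coordinate $n$ presents $E$ only as the increasing union of the relations $\{(x,y)\colon f(x)\,E_0^{(n)}\,f(y)\}$, each of which is smooth and (being $\subseteq E$) countable, hence hyperfinite --- so this argument yields only that $E$ is hyper-hyperfinite, and passing to genuine hyperfiniteness needs the structural work of \cite{DJK} (in effect, manufacturing a $\mathbb{Z}$-ordering of $E$ itself). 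Part~(ii), $\Leftarrow$, is the Hjorth--Kechris theorem \cite{HK97} that a countable Borel equivalence relation reducible to $E_0^\omega$ is already reducible to $E_0$, whereupon (i) applies. Part~(iii), $\Leftarrow$, uses that $E_t(X)$ is hypersmooth (\cite{KL}): any $E\le_B E_t(X)$ is then hypersmooth (pull back the smooth filtration), and a countable hypersmooth Borel equivalence relation is hyperfinite by \cite{KL} and \cite{DJK}.

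The main obstacle is the one common to all three converses: bridging a ``tower'' description --- an increasing union of hyperfinite, or of hypersmooth, pieces --- to honest hyperfiniteness for these particular targets. No soft permanence or reduction argument suffices; this is the same difficulty that makes the Union Problem hard and that Theorem~\ref{thm:main1} of this paper confronts. Accordingly, in the write-up I would give the forward directions in detail and cite \cite{DJK}, \cite{HK97}, \cite{KL} for the three converses.
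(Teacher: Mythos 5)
Your proposal is correct and takes essentially the same approach as the paper: the paper states this theorem purely as background, giving no proof and simply citing Dougherty--Jackson--Kechris, Hjorth--Kechris and Kechris--Louveau for exactly the implications you defer to those sources. The easy forward reductions you sketch (composing $E\le_B E_0$ with the diagonal embedding of $E_0$ into $E_0^{\omega}$, respectively with $E_0\le_B E_t\le_B E_t(X)$) are the standard ones, so nothing in your write-up conflicts with the paper's treatment.
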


We construct more Borel equivalence relations in the following.

\begin{definition}
	Let $E$ be a Borel equivalence relation on a standard Borel space $X$.
\begin{enumerate}

		\item[(f)] $E_0(E)$ is the equivalence relation on $X^{\omega}$ defined by $$E_0(E)=\left\{\,((x_n)_{n<\omega},(y_n)_{n<\omega})\colon \exists N<\omega\ \forall n>N\ (x_nEy_n)\,\right\}. $$
		
		\item[(g)] $E_{\mathcal{S}}(E)$ is the equivalence relation on $X^{\mathbb{Z}}$ defined by $$E_{\mathcal{S}}(E)=\left\{\,((x_n)_{n\in\mathbb{Z}},(y_n)_{n\in\mathbb{Z}})\colon \exists n\in\mathbb{Z}\ \forall k\in\mathbb{Z}\  (x_{n+k}Ey_{k})\,\right\}. $$
	\end{enumerate}
\end{definition}




It turns out that hyper-hyperfiniteness and hyperfinite-over-hyperfiniteness can be characterized by Borel reducibility to $E_0(E_0)$ and $E_{\mathcal{S}}(E_0)$, respectively, similarly to the above mentioned result of Hjorth--Kechris. Both of them are not essentially countable (this follows from Proposition \ref{prop:E1} and \ref{prop:E3}) so this characterization is not trivial.

\begin{proposition}\label{hhf}
	A countable Borel equivalence relation $E$ on a standard Borel space $X$ is hyper-hyperfinite if and only if $E\leq_B E_0(E_0)$.
\end{proposition}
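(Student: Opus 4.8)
The plan is to prove the two directions separately, using the characterizations of hyperfiniteness from Theorem~\ref{thm:hyp} as the only nontrivial inputs.

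For the direction ``hyper-hyperfinite $\Rightarrow$ reducible'', suppose $E=\bigcup_n E_n$ with each $E_n$ a hyperfinite Borel equivalence relation and $E_n\subseteq E_{n+1}$. By Theorem~\ref{thm:hyp}(i), $E_n\leq_B E_0$ for every $n$; fix Borel reductions $g_n\colon X\to 2^\omega$ witnessing this, and set $f(x)=(g_n(x))_{n<\omega}\in(2^\omega)^\omega$. Then $f$ is Borel, and since the sequence $(E_n)$ is increasing, $xEy$ holds if and only if $xE_ny$ for all sufficiently large $n$, which says exactly that $g_n(x)\mathrel{E_0}g_n(y)$ for all sufficiently large $n$, i.e.\ $f(x)\mathrel{E_0(E_0)}f(y)$. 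Hence $f$ witnesses $E\leq_B E_0(E_0)$. (This direction does not use countability of $E$.)

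For the direction ``reducible $\Rightarrow$ hyper-hyperfinite'', suppose $f\colon X\to(2^\omega)^\omega$ is a Borel reduction from $E$ to $E_0(E_0)$. For each $n$ define
$$ F_n=\{(x,y)\in X^2\colon f(x)(m)\mathrel{E_0}f(y)(m)\text{ for all }m\geq n\}. $$
Each $F_n$ is Borel, being a countable intersection of Borel sets; it is visibly an equivalence relation with $F_n\subseteq F_{n+1}$; and unwinding the definition of $E_0(E_0)$ gives $\bigcup_n F_n=E$. Since $F_n\subseteq E$ and $E$ is countable, each $F_n$ is a countable Borel equivalence relation, and the map $x\mapsto(f(x)(m))_{m\geq n}$ (reindexing $\{m\colon m\geq n\}$ as $\omega$) is a Borel reduction from $F_n$ to $E_0^\omega$. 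By Theorem~\ref{thm:hyp}(ii), $F_n$ is hyperfinite. Thus $E=\bigcup_n F_n$ exhibits $E$ as hyper-hyperfinite.

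The only step that is not bookkeeping is the appeal to the Hjorth--Kechris characterization (Theorem~\ref{thm:hyp}(ii)) to pass from ``$F_n$ is a countable Borel equivalence relation Borel reducible to $E_0^\omega$'' to ``$F_n$ is hyperfinite''; this is precisely where countability of $E$ enters, and it is the part I expect to be the crux. Everything else — Borelness of $F_n$, the telescoping identity $\bigcup_n F_n=E$, and checking that the coordinate maps are reductions — is routine.
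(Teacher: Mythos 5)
Your proof is correct and follows essentially the same route as the paper: the forward direction composes reductions $E_n\leq_B E_0$ coordinatewise, and the converse defines the increasing relations $F_n$ of ``agreement from coordinate $n$ on'' and applies the Hjorth--Kechris characterization via the tail reduction to $E_0^\omega$, using countability of $E$ exactly where you say. The only (harmless) difference is that you pull $F_n$ back through the reduction $f$ instead of assuming, as the paper does, that $X\subseteq(2^\omega)^\omega$ with $E=E_0(E_0)\upharpoonright X$; this sidesteps that ``without loss of generality'' step and makes the appeal to Theorem~\ref{thm:hyp}(ii) fully explicit.
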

\begin{proof}
	Suppose first $E\leq_B E_0(E_0)$ and $E$ is coutable. Without loss of generality, we may assume that $X\subseteq (2^{\omega})^{\omega}$, $E=E_0(E_0)\upharpoonright X$. By the Hjorth--Kechris Theorem (Theorem~\ref{thm:hyp} (ii)), we have
$$ F_n=\{(x,y)\in X^2\colon \forall k>n\ (x_kE_0y_k)\} $$
is hyperfinite on $X$. Then note that $F_n\subseteq F_{n+1}$ and $E=\bigcup F_n$.
	
	Conversely, suppose $E=\bigcup F_n$ is an increasing union of hyperfinite Borel equivalence relations $F_n$. Let $\phi_n$ be a Borel reduction from $F_n$ to $E_0$. Then $(\phi_n(x))_{n<\omega}$ reduces $E$ to $E_0(E_0)$.
\end{proof}

Given a hyperfinite-over-hyperfinite Borel equivalence relation $E$ over $X$ and a $\mathbb{Z}^2$-ordering $<$ witnessing it, it is natural to consider a refinement $F$ defined as $xFy$ if and only if the $<$-interval $[x,y]$ is finite. Then $<$ naturally induces an ordering for $F$-classes in each $E$-class. For further discussion, we need the following concept:

\begin{definition}
	Fix $E$, $<$ and $F$ as above. $x$ is {\em full} if the order type of $F$-classes in $[x]_E$ under $<$ is precisely $\mathbb{Z}$. We call the set of all full points the {\em full part} of $X$ and denote it as $\full(X)$. Its complement is called the {\em non-full part}.  
\end{definition}

Clearly both the full part and the non-full part of $E$ are $E$-invariant. Moreover, the non-full part is comparably trivial:

\begin{proposition}\label{nonfull}
		Given $E$, $<$ and $F$ as above, $E$ restricted to the non-full part is hyperfinite.
\end{proposition}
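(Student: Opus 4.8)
The plan is as follows. Since every $E$-class is, via $<$, order-isomorphic to a suborder of the lexicographically ordered $\mathbb{Z}^2$, it is countable; thus $E$ is a countable Borel equivalence relation and we may use the Lusin--Novikov and Feldman--Moore theorems freely. I would first show that $F$ is a hyperfinite Borel equivalence relation with $F\subseteq E$, and that the order which $<$ induces on the $F$-classes lying inside a single $E$-class is always isomorphic to a suborder of $\mathbb{Z}$; this forces it, on the non-full part, to have a least or a greatest element. I would then use the $<$-extreme $F$-class as a Borel complete section in order to reduce $E$, restricted to the non-full part, to $F$, which is hyperfinite.

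The relation $xFy$ is Borel --- this is where countability of $E$ enters: writing $E=\bigcup_n\mathrm{graph}(g_n)$ by Lusin--Novikov, the $<$-interval between $x$ and $y$ is finite iff for some $m$ it is contained in $\{g_0(x),\dots,g_m(x)\}$, which is a Borel condition. Each $F$-class is convex in its $E$-class under $<$ (a subinterval of a finite interval is finite), so $<\upharpoonright[x]_F$ is a countable linear order all of whose intervals are finite, hence order-isomorphic to a suborder of $\mathbb{Z}$; moreover the ternary relation ``$y,z\in[x]_F$ and $y<z$'' is Borel, so $F$ admits a Borel $\mathbb{Z}$-ordering and is therefore hyperfinite by the Slaman--Steinhorn characterisation recalled in the introduction. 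For the structure of $C/F$, fix an $E$-class $C$ together with an order isomorphism of $(C,<)$ onto a suborder of lexicographic $\mathbb{Z}^2$. Two points of $C$ with the same first coordinate lie in one $F$-class, since the $<$-interval between them is contained in a single column $\{a\}\times\mathbb{Z}$ and hence is finite; thus every $F$-class is a union of whole columns, and being convex it is a union of \emph{consecutive} columns. The set of first coordinates occurring in $C$ is a subset of $\mathbb{Z}$, so only finitely many of them lie strictly between any two; hence between any two $F$-classes of $C$ there are only finitely many, i.e.\ the order induced by $<$ on $C/F$ is a countable, locally finite linear order, so it is isomorphic to a finite order, to $\omega$, to $\omega^{*}$, or to $\mathbb{Z}$. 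Since $x\in\full(X)$ precisely when $C/F\cong\mathbb{Z}$, on the non-full part $C/F$ is of one of the first three types and therefore has a least or a greatest element.

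Now let $N_{\min}$ (resp.\ $N_{\max}$) be the set of those $x$ for which $[x]_E/F$ has a $<$-least (resp.\ $<$-greatest) $F$-class; by countability of $E$ these are Borel, they are $E$-invariant, and by the previous paragraph their union equals the non-full part. Put $A_{\min}=\{x: [x]_F$ is the $<$-least $F$-class of $[x]_E\}$; noting that for $z\in[x]_E$ with $\lnot(zFx)$ the points $x,z$ are $<$-comparable and $x<z$ iff $[x]_F<[z]_F$, one checks that $A_{\min}$ is Borel and meets each $E$-class contained in $N_{\min}$ in exactly its least $F$-class. Points of $A_{\min}$ in the same $E$-class then lie in the same ($=$ least) $F$-class, so $E\upharpoonright A_{\min}=F\upharpoonright A_{\min}$, which is hyperfinite. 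Using a Lusin--Novikov enumeration one builds a Borel map $f\colon N_{\min}\to A_{\min}$ with $f(x)\mathrel E x$ (pick the first $g_n(x)$ landing in $A_{\min}$), whence $E\upharpoonright N_{\min}\le_B E\upharpoonright A_{\min}\le_B E_0$, so $E\upharpoonright N_{\min}$ is hyperfinite by Theorem~\ref{thm:hyp}(i). Symmetrically $E\upharpoonright N_{\max}$ is hyperfinite, and since a finite union of $E$-invariant Borel pieces on which $E$ is hyperfinite is itself hyperfinite, $E$ restricted to the non-full part is hyperfinite.

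The one step carrying genuine content is the structural observation that $C/F$ is a suborder of $\mathbb{Z}$, so that ``$C/F\not\cong\mathbb{Z}$'' actually forces a least or greatest $F$-class; everything else --- Borelness of $F$, $N_{\min}$, $N_{\max}$ and $A_{\min}$, hyperfiniteness of $F$, and the complete-section reduction --- is routine once $E$ is known to be a countable Borel equivalence relation.
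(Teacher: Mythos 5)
Your proof is correct and follows essentially the same route as the paper's: on the non-full part the induced order on $F$-classes is a suborder of $\mathbb{Z}$, so the extremal $F$-classes form a Borel complete section on which the relation is hyperfinite, and hyperfiniteness then passes to the whole non-full part. The differences are only in packaging --- you split into the ``least'' and ``greatest'' $E$-invariant pieces and build an explicit reduction to $F$ where the paper takes the union of both extremal classes at once and invokes the complete-section fact --- plus one small slip: the $\mathbb{Z}$-ordering characterization of hyperfiniteness is due to Slaman--Steel, not ``Slaman--Steinhorn''.
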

\begin{proof}
	Let $x\in\full(X)^c$ be in the non-full part. Then the set of $F$-classes in $[x]_E$ has an $<$-extremal element, i.e., either a maximum or a minimum. Let $S=\{y: yFz$ for every $z<y$ or $yFz$ for every $y<z\}$ be the union of such extremal classes. Clearly $S$ is Borel. $<$ becomes a $\mathbb{Z}$-ordering when restricted to $S$, thus making $E\upharpoonright S$ hyperfinite. 
	
	Now $S$ is a Borel complete section of the non-full part, thus $E$ is hyperfinite when restricted to the non-full part.
\end{proof}

\begin{proposition}\label{hoh}
	A countable Borel equivalence relation $E$ on a standard Borel space $X$ is hyperfinite-over-hyperfinite if and only if $E\leq_B E_{\mathcal{S}}(E_0)$.
\end{proposition}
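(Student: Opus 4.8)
The plan is to prove the two directions separately, using the Hjorth--Kechris characterization of hyperfiniteness for countable equivalence relations (Theorem~\ref{thm:hyp}(ii)) as the main engine, together with the full/non-full decomposition introduced above and, in one place, Lusin--Novikov uniformization.

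\emph{From hyperfinite-over-hyperfiniteness to $E\leq_B E_{\mathcal{S}}(E_0)$.} Fix a Borel class-wise $\mathbb{Z}^2$-ordering $<$ of $E$ and let $F$ be the associated refinement, so that $<$ restricted to any $F$-class is a linear order all of whose intervals are finite; hence $<\upharpoonright[x]_F$ is a suborder of $\mathbb{Z}$, $F$ is hyperfinite, and by Theorem~\ref{thm:hyp}(i) we may fix a Borel $G\colon X\to 2^\omega$ reducing $F$ to $E_0$. By Proposition~\ref{nonfull}, $E$ is hyperfinite on the non-full part, hence $\leq_B E_0\leq_B E_{\mathcal{S}}(E_0)$, the last reduction sending a point to the constant $\mathbb{Z}$-sequence. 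On $\full(X)$ the $F$-classes inside a given $E$-class are $<$-ordered like $\mathbb{Z}$; for $x\in\full(X)$ and $n\in\mathbb{Z}$ write $C_{x,n}$ for the $F$-class lying $n$ steps from $[x]_F$ in that ordering, and note that the relation $\{(x,n,z)\colon x\in\full(X),\ z\in C_{x,n}\}$ is Borel with countable vertical sections (all quantifiers needed to define it range over $[x]_E$, which is countable). By Lusin--Novikov uniformization there is a Borel $\Phi\colon\full(X)\times\mathbb{Z}\to X$ with $\Phi(x,n)\in C_{x,n}$; put $h(x)=(G(\Phi(x,n)))_{n\in\mathbb{Z}}$. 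If $xEy$ and $[x]_F=C_{y,m}$, then $C_{x,n}=C_{y,n+m}$ for all $n$, so $G(\Phi(x,n))\mathrel{E_0}G(\Phi(y,n+m))$ and hence $h(x)\mathrel{E_{\mathcal{S}}(E_0)}h(y)$; conversely, if $h(x)\mathrel{E_{\mathcal{S}}(E_0)}h(y)$, then some $G(\Phi(x,k))\mathrel{E_0}G(\Phi(y,0))$, so $\Phi(x,k)\mathrel{F}\Phi(y,0)$, whence $xEy$. Thus $h$ reduces $E\upharpoonright\full(X)$ to $E_{\mathcal{S}}(E_0)$. Finally $E_{\mathcal{S}}(E_0)\oplus E_{\mathcal{S}}(E_0)\leq_B E_{\mathcal{S}}(E_0)$, via the two maps that interleave each coordinate of a $\mathbb{Z}$-sequence with a constant bit $0$, respectively $1$ (a constant bit survives both $E_0$ in each coordinate and the shift, so the two ranges are $E_{\mathcal{S}}(E_0)$-inequivalent); combining the reductions on $\full(X)$ and on the non-full part through these two maps yields $E\leq_B E_{\mathcal{S}}(E_0)$.

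\emph{From $E\leq_B E_{\mathcal{S}}(E_0)$ to hyperfinite-over-hyperfiniteness.} Let $f\colon X\to(2^\omega)^{\mathbb{Z}}$ be a Borel reduction of $E$ to $R:=E_{\mathcal{S}}(E_0)$, and let $E'$ be the pullback of coordinatewise $E_0$: $xE'y$ iff $f(x)_n\mathrel{E_0}f(y)_n$ for all $n\in\mathbb{Z}$. Then $E'\subseteq E$ automatically, $E'$ is a countable Borel equivalence relation, and $f$ witnesses $E'\leq_B E_0^\omega$, so $E'$ is hyperfinite by Theorem~\ref{thm:hyp}(ii); fix a Borel class-wise $\mathbb{Z}$-ordering $<'$ of $E'$. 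The shift $\sigma$ on $(2^\omega)^{\mathbb{Z}}$ preserves coordinatewise $E_0$, so for each $u$ the set $P(u)=\{m\colon\sigma^m u\text{ is coordinatewise }E_0\text{-equivalent to }u\}$ is a subgroup of $\mathbb{Z}$ depending only on the $R$-class of $u$; consequently $X=X_0\sqcup X_{\geq1}$, where $X_0=\{x\colon P(f(x))=\{0\}\}$, is a partition into Borel $E$-invariant sets. On $X_0$, for each pair $xEy$ there is a unique $\delta(x,y)\in\mathbb{Z}$ with $\sigma^{\delta(x,y)}f(x)$ coordinatewise $E_0$-equivalent to $f(y)$; one checks $\delta(x,z)=\delta(x,y)+\delta(y,z)$ and $\delta(x,y)=\delta(x,y')$ iff $yE'y'$, so the relation $x\prec y\iff xEy$ and $\delta(x,y)>0$ is a Borel partial order generating $E\upharpoonright X_0$ that orders the $E'$-classes of each $E$-class as a suborder of $\mathbb{Z}$; combining $\prec$ lexicographically with $<'$ (the $E'$-classes ordered by $\prec$, each ordered internally by $<'$) gives a Borel class-wise order on $X_0$ that is a suborder of $\mathbb{Z}^2$ on each $E$-class, so $E\upharpoonright X_0$ is hyperfinite-over-hyperfinite. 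On $X_{\geq1}$, every $E$-class contains only finitely many $E'$-classes, so $E\upharpoonright X_{\geq1}$ is a finite-index extension of the hyperfinite $E'\upharpoonright X_{\geq1}$ and is therefore hyperfinite (a standard closure property of hyperfinite Borel equivalence relations), hence in particular hyperfinite-over-hyperfinite. Since a hyperfinite-over-hyperfinite structure on each piece of an $E$-invariant Borel partition of $X$ assembles into one on $X$ (no $E$-class straddles the partition), the proof is complete.

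\emph{The main difficulty.} In the first direction the delicate point is that there is no Borel transversal for the $F$-classes, so the coding map must select representatives by Lusin--Novikov uniformization rather than canonically, and one must then verify both that the resulting $h$ is a reduction and that it inverts correctly. In the second direction the genuinely subtle case is $X_{\geq1}$: there the shift induces only a cyclic ordering of the (finitely many) $E'$-classes inside an $E$-class, which cannot be Borel-linearized directly, which is why that part is isolated and handled by invoking that finite-index extensions of hyperfinite equivalence relations are hyperfinite. All measurability checks (Borelness of the ordering of $F$-classes within an $E$-class, of $P(f(x))$, and of $\delta$) are routine, since every quantifier involved ranges over a countable set.
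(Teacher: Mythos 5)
Your proof is correct, and in both directions it follows the same overall strategy as the paper's, with a couple of local differences worth recording. In the direction from a class-wise $\mathbb{Z}^2$-ordering to $E\leq_B E_{\mathcal{S}}(E_0)$, the paper likewise discards the non-full part via Proposition~\ref{nonfull} and codes each full point by a $\mathbb{Z}$-indexed sequence of representatives of the successive $F$-classes, composed with a reduction of $F$ to $E_0$; the difference is in how the representatives are selected. The paper uniformizes the ``just below'' relation to obtain the partial injection $\gamma$ and then uses closest-point maps $\phi,\psi$ (machinery it keeps on purpose, since $\gamma$ is reused in Section~\ref{hfovhfer}), whereas you uniformize the relation ``$z$ lies in the $F$-class $n$ steps from $[x]_F$'' directly by Lusin--Novikov, which is a legitimate and somewhat shorter route; you also make explicit the gluing of the full and non-full parts via the interleaving argument giving $E_{\mathcal{S}}(E_0)\oplus E_{\mathcal{S}}(E_0)\leq_B E_{\mathcal{S}}(E_0)$, a point the paper leaves implicit. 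In the converse direction your argument is essentially the paper's: the pullback $E'$ of coordinatewise $E_0$ is hyperfinite by Hjorth--Kechris, the aperiodic part is ordered lexicographically by the shift displacement $\delta$ together with a class-wise $\mathbb{Z}$-order of $E'$ (this is exactly the paper's $<^*$ combined with $<_*$), and the periodic part is handled separately; your appeal to the finite-index closure property (\cite[Proposition 1.3(vii)]{DJK}) there is clean and is the same fact the paper invokes later, and working through the reduction $f$ rather than assuming $E=E_{\mathcal{S}}(E_0)\upharpoonright X$ sidesteps the paper's ``without loss of generality''. One phrasing slip, with no effect on the argument: on $X_0$ the order $\prec$ alone need not generate $E\upharpoonright X_0$ (an $E$-class could consist of a single $E'$-class), but the lexicographic combination with $<'$, which is what you actually use, does generate it.
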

\begin{proof}
	Suppose first $E\leq_B E_{\mathcal{S}}(E_0)$ and $E$ is coutable. We construct a Borel partial order $<$ which generates $E$ and is class-wise $\mathbb{Z}^2$. Without loss of generality, we can assume that $X\subseteq (2^{\omega})^{\mathbb{Z}}$, $E=E_{\mathcal{S}}(E_0)\upharpoonright X$. Consider the Borel $E$-invariant set
$$ A=\{x\in X\colon \exists n\neq 0\ \forall k\in\mathbb{Z}\ (x_{n+k}E_0x_k)\}. $$  Then for each $x\in A$, $[x]_E$ can be characterized by finitely many $E_0$-classes. Hence $E\upharpoonright A$ is hyperfinite, and we can define a Borel partial order $<$ on $A$ which is class-wise $\mathbb{Z}$; in particular it is class-wise $\mathbb{Z}^2$.

It remains to define a Borel class-wise $\mathbb{Z}^2$-ordering on $X\setminus A$ for $E$. Let $F$ be the equivalence relation on $X$ defined by
$$xFy \iff \forall k\in\mathbb{Z}\ (x_kE_0y_k). $$
Clearly $F$ is Borel and $F\subseteq E$, hence $F$ is also countable. By the Hjorth--Kechris Theorem (Theorem~\ref{thm:hyp} (ii)), $F$ is hyperfinite, and we can define Borel partial order $<_*$ on $X$ which is class-wise $\mathbb{Z}$ and generates $F$. Now if $x,y\in X\setminus A$ are from the same $E$-class but different $F$-classes, put
$$x<^*y \iff \exists n>0\ \forall k\in\mathbb{Z}\ (x_{n+k}E_0y_{k}). $$
Clearly $<^*$ is a well-defined Borel partial order on $X\setminus A$ that is $F$-invariant, i.e., if $xFx'$, $yFy'$ and $x<^*y$, then $x'<^*y'$. Also clear is that $<^*$ linearly orders $F$-classes inside a single $E$-class into a $\mathbb{Z}$-order. Now define a Borel partial order $<$ on $X\setminus A$ by
 $$ x<y\iff (xFy \mbox{ and } x<_*y) \mbox{ or } (xEy \mbox{ and } \neg xFy \mbox{ and } x<^*y). $$
 Then $<$ is class-wise $\mathbb{Z}^2$ and generates $E$.

	Conversely, fix a Borel partial order $<$ witnessing that $E$ is hyperfinite-over-hyperfinite. For $xEy$ and $x<y$, define $xFy$ iff $yFx$ iff there is a finite sequence $x=x_0<x_1<\dots<x_k=y$ which is maximal of this form. Then $F$ is a Borel equivalence relation, and each $F$ class is order-embeddable into $\mathbb{Z}$, thus $F$ is hyperfinite. Notice that $<$ orders $F$-classes in a single $E$-class into an order which is order-embeddable into $\mathbb{Z}$. By Proposition \ref{nonfull}, we can focus only on full$(X)$.

	For $x<y$ that are not $F$-equivalent, if for any $z$ such that $x<z<y$ either $xFz$ or $zFy$, we say that $x$ is {\em just below} $y$, or $y$ is {\em just above} $x$; we denote this relation as $B(x,y)$. Clearly $B$ is $F$-invariant. Using countable uniformization on $B$ twice, we obtain a Borel partial injection $\gamma$ such that both $\dom(\gamma)$ and $\range(\gamma)$ are $F$-complete sections, and $B(\gamma(x),x)$ for $x\in \dom(\gamma)$. Now for every $x\in X$ let $p(x)\in \dom(\gamma)\cap [x]_F$ be the closest element to $x$ in the $<$-order (which is a $\mathbb{Z}$-order on $[x]_F$), if a unique such element exists; otherwise there are two such elements, and noting that they are ordered by $<$, so we can let $p(x)$ be the smaller one in the $<$-order. Let $\phi(x)=\gamma(p(x))$. Similarly, define $q(x)\in\range(\gamma)\cap [x]_F$ to be a closest element to $x$ in the $<$-order, and let $\psi(x)=\gamma^{-1}(q(x))$. Then the map $x\mapsto (\dots,\psi^{2}(x),\psi(x),x,\phi(x),\phi^2(x),\dots)$ reduces $E$ to $E_{\mathcal{S}}(F)$. Lastly, $F$ is generated by a $\mathbb{Z}$-ordering, thus hyperfinite. Fixing any reduction witnessing $F\leq_B E_0$, the composition reduces $E$ to $E_{\mathcal{S}}(E_0)$.
\end{proof}	

\begin{remark}
		The function $\gamma$ in above proof is going to be used repeatedly in Section \ref{hfovhfer}.
\end{remark}

As we stated in the introduction, the following problems are open.

\begin{problem}[The Union Problem]
	Is every hyper-hyperfinite equivalence relation hyperfinite?
\end{problem}


\begin{problem}[The Hyperfinite-over-Hyperfinite Problem]
	Is every hyperfinite-over-hyperfinite equivalence relation hyperfinite?
\end{problem}

The Union Problem is stated in \cite{DJK} and better known. The Hyperfinite-over-Hyperfinite Problem has been in the folklore, as a special case of Weiss's question. One may reformulate these two problems in the manner of Theorem \ref{thm:hyp}, asking what the exact complexity of those countable Borel equivalence relations reducible to $E_0(E_0)$ or $E_{\mathcal{S}}(E_0)$ is. This leads to a natural question: what are the relationship of $E_0(E_0)$ and $E_{\mathcal{S}}(E_0)$ with $E_1$ and $E_0^{\omega}$? The following propositions give a brief study of their positions in the Borel reducibility hierarchy.

\begin{proposition}\label{prop:E1}
	$E_1\leq_B E_0(E_0).$
\end{proposition}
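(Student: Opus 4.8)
The plan is to exhibit an explicit Borel reduction from $E_1$ to $E_0(E_0)$. Recall that $E_1$ is the equivalence relation on $(2^\omega)^\omega$ given by $x \mathrel{E_1} y \iff \exists N\ \forall n \geq N\ x_n = y_n$, while $E_0(E_0)$ is defined on $(2^\omega)^\omega$ by $x \mathrel{E_0(E_0)} y \iff \exists N\ \forall n > N\ (x_n \mathrel{E_0} y_n)$. So $E_0(E_0)$ differs from $E_1$ only in that on the eventually-agreeing tail we ask for $E_0$-equivalence rather than literal equality. Since literal equality refines $E_0$, the identity map $(2^\omega)^\omega \to (2^\omega)^\omega$ already satisfies the forward implication $x \mathrel{E_1} y \Rightarrow x \mathrel{E_0(E_0)} y$; the issue is the reverse implication, which fails for the identity (two sequences can be $E_0$-equivalent coordinatewise on a tail without being equal on a tail).

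First I would fix, using the countability of $E_0$-classes, a Borel selector $s\colon 2^\omega \to 2^\omega$ for $E_0$, i.e. a Borel map with $s(a) \mathrel{E_0} a$ and $a \mathrel{E_0} b \Rightarrow s(a) = s(b)$. Then I define $f\colon (2^\omega)^\omega \to (2^\omega)^\omega$ coordinatewise by $f(x)_n = s(x_n)$. This $f$ is Borel. For the verification: if $x \mathrel{E_1} y$, then $x_n = y_n$ for all $n \geq N$, hence $f(x)_n = s(x_n) = s(y_n) = f(y)_n$ for all $n \geq N$, so in particular $f(x)_n \mathrel{E_0} f(y)_n$ for all $n > N$ and $f(x) \mathrel{E_0(E_0)} f(y)$. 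Conversely, suppose $f(x) \mathrel{E_0(E_0)} f(y)$, so there is $N$ with $s(x_n) \mathrel{E_0} s(y_n)$ for all $n > N$; since $s$ is a selector, $s(x_n)$ and $s(y_n)$ are the canonical representatives of their $E_0$-classes, so $s(x_n) \mathrel{E_0} s(y_n)$ forces $s(x_n) = s(y_n)$, i.e. $f(x)_n = f(y)_n$ for all $n > N$, which gives $f(x) \mathrel{E_1} f(y)$. But this is a reduction of $E_1$ to $E_1 \upharpoonright \range(f)$, not yet of $E_1$ to $E_0(E_0)$ on the original space — I need $x \mathrel{E_1} y$ on the domain side, not $f(x) \mathrel{E_1} f(y)$.

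To fix this I would instead pad: define $f\colon (2^\omega)^\omega \to (2^\omega)^\omega$ by interleaving each original coordinate with its selector value, e.g. $f(x)_{2n} = x_n$ and $f(x)_{2n+1} = s(x_n)$, or more simply use the map $f(x)_n = (x_n, s(x_n))$ under a Borel identification of $2^\omega \times 2^\omega$ with $2^\omega$ that respects $E_0$ in each factor. Then $f(x) \mathrel{E_0(E_0)} f(y)$ iff for large $n$ we have $x_n \mathrel{E_0} y_n$ \emph{and} $s(x_n) \mathrel{E_0} s(y_n)$; the first conjunct already gives $s(x_n) = s(y_n)$, and then the second is automatic, but more importantly $x_n \mathrel{E_0} y_n$ together with $s(x_n) = s(y_n)$ still does not yield $x_n = y_n$. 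The cleanest route, and the one I would actually carry out, is: use the selector to define $f(x)_n = s(x_n)$ as above but observe that this already reduces $E_1$ to $E_0(E_0)$ directly once one checks the forward direction via equality (done above) and the backward direction, which I claim works as stated because $f(x) \mathrel{E_0(E_0)} f(y) \Rightarrow f(x)_n = f(y)_n$ eventually $\Rightarrow f(x) \mathrel{E_1} f(y)$, and then one needs $f(x) \mathrel{E_1} f(y) \Rightarrow x \mathrel{E_1} y$, which fails in general — so this does not work either.

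The genuinely correct construction, which I would present, keeps \emph{both} pieces of information and breaks the $E_0$-classes apart by recording not the selector value but the ``distance'' data. Concretely: for $a \in 2^\omega$ let $c(a) \in 2^\omega$ be the unique element with $c(a) \mathrel{E_0} a$ that encodes the finite string $a \upharpoonright m$ where $m$ is least with $a(k) = s(a)(k)$ for all $k \geq m$ — this is Borel, $c(a) \mathrel{E_0} a$, and crucially $a \mathrel{E_0} b$ with $a \neq b$ in the sense of differing on some initial segment still gives $c(a) = c(b)$ only when $a = b$. I would set $f(x)_n = c(x_n)$. Then $f$ is Borel; $x \mathrel{E_1} y \Rightarrow f(x)_n = c(x_n) = c(y_n) = f(y)_n$ eventually $\Rightarrow f(x) \mathrel{E_0(E_0)} f(y)$; and $f(x) \mathrel{E_0(E_0)} f(y) \Rightarrow c(x_n) \mathrel{E_0} c(y_n)$ eventually, hence (as $c(x_n), c(y_n)$ lie in the same $E_0$-class and encode the full data of $x_n$ resp. $y_n$) $x_n = y_n$ eventually, i.e. $x \mathrel{E_1} y$. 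The main obstacle — and the only subtle point — is arranging the Borel map $a \mapsto c(a)$ so that it is genuinely injective on each $E_0$-class yet still lands inside that class; this is where one must be careful, and it is handled by a standard coding argument using the fact that each $E_0$-class is countable and carries a natural $\mathbb{Z}$-like structure, so an element can be Borel-coded by its position together with a representative.
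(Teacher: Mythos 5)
Your proposal does not work, and the failure is not a technicality but the central point. First, the Borel selector $s\colon 2^\omega\to 2^\omega$ for $E_0$ that you ``fix'' at the outset does not exist: a countable Borel equivalence relation admits a Borel selector if and only if it is smooth, and $E_0$ is the canonical non-smooth example. Countability of the classes gives countable-to-one uniformizations, not a selector. Every subsequent construction in your write-up (the padding map, and the final map $a\mapsto c(a)$, whose definition explicitly refers to $s(a)$) inherits this nonexistent object. Second, even granting some coding device, your final requirement is internally contradictory: you insist that $c(a)\mathrel{E_0} a$ for every $a$, and simultaneously that $c(x_n)\mathrel{E_0} c(y_n)$ should force $x_n=y_n$. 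But if $c(a)\mathrel{E_0} a$ and $c(b)\mathrel{E_0} b$, then any pair $a\neq b$ with $a\mathrel{E_0} b$ already satisfies $c(a)\mathrel{E_0} c(b)$, so the backward implication fails on every nontrivial $E_0$-class. Injectivity of $c$ on each class cannot help, because the hypothesis you get from $E_0(E_0)$ is only $E_0$-equivalence of the images, and the $E_0$-class of $c(a)$ is, by your own constraint, just the class of $a$, which does not determine $a$.

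The missing idea is to drop the demand that the coding map land in the same $E_0$-class. What is needed is a Borel map $\phi\colon 2^\omega\to 2^\omega$ with $a=b\iff \phi(a)\mathrel{E_0}\phi(b)$, i.e.\ a Borel reduction of equality to $E_0$; such a $\phi$ exists because $=$ is smooth (for instance, repeat each bit of $a$ in a pattern so that each coordinate $a(n)$ recurs infinitely often in $\phi(a)$; then distinct reals have images differing infinitely often, hence $E_0$-inequivalent). This is exactly the paper's proof: set $f(x)=(\phi(x_n))_{n<\omega}$, and then $x\mathrel{E_1}y$ iff eventually $x_n=y_n$ iff eventually $\phi(x_n)\mathrel{E_0}\phi(y_n)$ iff $f(x)\mathrel{E_0(E_0)}f(y)$. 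Note that such a $\phi$ necessarily moves points out of their $E_0$-classes, which is precisely the freedom your construction forbade itself.
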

\begin{proof}
Fix any Borel $\phi$ that reduces $=$ to $E_0$. Then the map $f:(2^{\omega})^{\omega}\to(2^{\omega})^{\omega}$ given by $x\mapsto (\phi(x_n))_{n<\omega}$ reduces $E_1$ to $E_0(E_0)$. 
\end{proof}

\begin{proposition}
	$E_1\nleq_B E_{\mathcal{S}}(E_0)$.	
\end{proposition}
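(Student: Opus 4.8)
The plan is to show that $E_1 \nleq_B E_{\mathcal{S}}(E_0)$ by exploiting the classical Kechris–Louveau theorem that $E_1$ is not Borel reducible to any countable Borel equivalence relation, or more precisely, that $E_1$ is not reducible to any equivalence relation induced by a Borel action of a Polish group (equivalently, not reducible to $E_0^\omega$, and indeed not reducible to any "essentially countable" equivalence relation or more). Here, however, $E_{\mathcal{S}}(E_0)$ is itself not essentially countable (this is noted just before the proposition), so I cannot cite Kechris–Louveau directly. Instead I would use the structural description of $E_{\mathcal{S}}(E_0)$: a point $x \in (2^\omega)^{\mathbb{Z}}$ carries a $\mathbb{Z}$-indexed sequence of $E_0$-classes, and $E_{\mathcal{S}}(E_0)$ identifies two such sequences when one is a shift of the other (coordinatewise mod $E_0$). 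So $E_{\mathcal{S}}(E_0)$ is a "reduct" built from the hyperfinite $F$ (coordinatewise $E_0$) by a shift action of $\mathbb{Z}$. The key point is that the shift is by the \emph{countable} group $\mathbb{Z}$, whereas the "hyperfinite-over" layer $F$ is itself essentially hyperfinite (hence essentially countable). This suggests $E_{\mathcal{S}}(E_0) \leq_B E_0^\omega \times (\text{something countable})$, or at least that $E_{\mathcal{S}}(E_0)$ shares with $E_0^\omega$ the feature that obstructs $E_1$.

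**First** I would reduce to the known fact: $E_1 \nleq_B E_{\mathcal{S}}(F)$ for any hyperfinite $F$ would follow if $E_{\mathcal{S}}(F) \leq_B E_{\mathcal{S}}(E_0) \leq_B$ something to which $E_1$ does not reduce. The cleanest route is to observe that $E_{\mathcal{S}}(E_0)$ is Borel reducible to an equivalence relation of the form $E_0^\omega$ composed with a countable piece, and that the Kechris–Louveau rigidity theorem for $E_1$ actually rules out reducibility of $E_1$ into any countable-to-one Borel "extension" of an essentially hyperfinite relation. Concretely: given a Borel reduction $g : (2^\omega)^\omega \to (2^\omega)^{\mathbb{Z}}$ of $E_1$ to $E_{\mathcal{S}}(E_0)$, I would post-compose with the map sending $y \in (2^\omega)^{\mathbb{Z}}$ to its $F$-class data, i.e., to the point $\bar y \in (2^\omega/E_0)^{\mathbb{Z}}$, realized Borel-ly via a reduction of $E_0$ to $E_0$ and a selector; the shift equivalence then becomes a genuine $\mathbb{Z}$-action on a standard Borel space, i.e., a countable Borel equivalence relation $\tilde E$. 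The composite $E_1 \to \tilde E$ is "countable-to-one modulo $F^\omega$", and since $F^\omega$ is essentially hyperfinite (being a countable product of hyperfinite relations, $\leq_B E_0^\omega$), a pigeonhole/absoluteness argument pulls back the classical non-reducibility of $E_1$ to countable Borel equivalence relations.

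**The hard part** will be making the last step rigorous: we do not have $E_1 \nleq_B E_0^\omega$ combining cleanly with "countable on top", because an arbitrary Borel equivalence relation sitting between $E_0^\omega$-many classes and finitely-many-shifts need not itself be below $E_0^\omega$. I expect the actual proof uses the Kechris–Louveau theorem in its sharper form — $E_1$ is not reducible to any increasing union, or to $E_0^\omega$ even after a "countable blow-up" — together with the specific \emph{generic ergodicity / turbulence-free} structure of $E_{\mathcal{S}}(E_0)$. Alternatively, and perhaps more likely in this paper's idiom, the proof invokes the characterization from Proposition~\ref{hoh}: if $E_1 \leq_B E_{\mathcal{S}}(E_0)$ then, after restricting to an invariant Borel set where the reduction lands in the full part (handling the non-full part by Proposition~\ref{nonfull}, which only yields hyperfinite pieces, while $E_1$ is not hyperfinite so cannot land entirely there), one gets from the $\mathbb{Z}^2$-ordering structure a Borel $\mathbb{Z}$-valued cocycle over $E_1$, and such a cocycle must be a coboundary on a large set by the rigidity of $E_1$ (the $E_1$-classes cannot be Borel-linearly-ordered in a $\mathbb{Z}$-like way because $E_1$ is not hyperfinite); pushing this through collapses the $\mathbb{Z}^2$ structure to a $\mathbb{Z}$ structure, giving $E_1 \leq_B E_0$, contradicting $E_1 \nleq_B E_0$. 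I would pursue this second approach: (i) use Proposition~\ref{nonfull} to discard the non-full part; (ii) on the full part, extract the shift cocycle $c : E_1 \to \mathbb{Z}$; (iii) argue $c$ is trivializable using that $E_1$ is not hyperfinite and hence admits no "$\mathbb{Z}$-direction"; (iv) conclude $E_1 \leq_B E_0$, contradiction. Step (iii) is where the real work lies, and it is essentially a repackaging of the Kechris–Louveau $E_1$-dichotomy.
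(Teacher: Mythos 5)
There is a genuine gap, and in fact the missing idea is one you already had in hand and then set aside. You correctly quote the sharp form of Kechris--Louveau --- $E_1$ is not Borel reducible to any orbit equivalence relation of a Polish group action --- but then declare it inapplicable because $E_{\mathcal{S}}(E_0)$ is not essentially countable. Essential countability is irrelevant to that theorem; what you need to observe is simply that $E_{\mathcal{S}}(E_0)$ \emph{is} a Polish group orbit equivalence relation. This is the paper's entire proof: $E_0$ is the orbit relation of a $\mathbb{Z}$-action on $2^{\omega}$ (a single Borel automorphism), so the Polish group $G=\mathbb{Z}\ltimes\mathbb{Z}^{\mathbb{Z}}$ (with $\mathbb{Z}$ acting on $H=\mathbb{Z}^{\mathbb{Z}}$ by shift) acts on $(2^{\omega})^{\mathbb{Z}}$, the $H$-part acting coordinatewise and the $\mathbb{Z}$-part shifting coordinates, and the induced orbit relation is exactly $E_{\mathcal{S}}(E_0)$. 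Kechris--Louveau then finishes the argument in one line. Your two substitute routes do not close the gap: the claim that $E_{\mathcal{S}}(E_0)\leq_B E_0^{\omega}\times(\text{something countable})$, and the claim that $E_1$-rigidity rules out ``countable-to-one extensions of essentially hyperfinite relations,'' are both asserted rather than proved (and you yourself flag the latter as ``the hard part'').

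The second route has an additional structural problem. Propositions \ref{nonfull} and \ref{hoh}, the full/non-full decomposition, and the class-wise $\mathbb{Z}^2$-ordering machinery are all set up for \emph{countable} Borel equivalence relations; $E_{\mathcal{S}}(E_0)$ is not countable (already the coordinatewise-$E_0$ relation $F$ on $(2^{\omega})^{\mathbb{Z}}$ has uncountable classes), so it carries no class-wise $\mathbb{Z}^2$-ordering and there is no ``full part'' to restrict to. Moreover the $\mathbb{Z}$-valued shift cocycle is not even well defined on classes whose sequence of $E_0$-classes is periodic, and your step (iii) --- that such a cocycle over $E_1$ must trivialize because $E_1$ is not hyperfinite --- is precisely the unproved crux; it is not a repackaging of the Kechris--Louveau $E_1$-dichotomy, which concerns hypersmooth relations and says nothing of this kind. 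As written, the proposal is a sketch whose essential steps remain open, whereas the intended proof needs only the semidirect-product observation above.
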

\begin{proof}
	It is known that $E_1$ is not reducible to any orbit equivalence relation (\cite{KL}), so it is enough to show that $E_{\mathcal{S}}(E_0)$ can be obtained as an orbit equivalence relation induced by a Polish group action.
	
	Consider the Polish group $H=\mathbb{Z}^{\mathbb{Z}}$ with the usual coordinate-wise addition, let $\mathbb{Z}$ acts on $H$ naturally via shifting. Then the semiproduct $G=\mathbb{Z}\ltimes H$ is a Polish group. Recall that $E_0$ is an orbit relation obtained by an action of $\mathbb{Z}$ over $2^{\omega}$, so $E_{\mathcal{S}}(E_0)$ is an orbit relation over $(2^{\omega})^{\mathbb{Z}}$ obtained as an action of $G$, as claimed. 
	
\end{proof}

In particular, $E_0(E_0)\nleq_B E_{\mathcal{S}}(E_0)$.

\begin{proposition}\label{prop:E3}
	$E_0^{\omega}\leq_B E_{\mathcal{S}}(E_0).$
\end{proposition}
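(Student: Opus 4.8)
The plan is to write down an explicit Borel reduction $f\colon (2^\omega)^\omega\to(2^\omega)^{\mathbb{Z}}$ that encodes the $\omega$-sequence $x$ into the nonnegative coordinates of $f(x)$ while using a single ``anchor'' coordinate to kill the shift ambiguity built into $E_{\mathcal{S}}(E_0)$. Fix $s,t\in 2^\omega$ with $\neg(sE_0t)$, say $s=(0,0,0,\dots)$ and $t=(0,1,0,1,\dots)$. Let $\iota\colon 2^\omega\times 2^\omega\to 2^\omega$ be the interleaving map given by $\iota(a,b)(2n)=a(n)$ and $\iota(a,b)(2n+1)=b(n)$; the only fact about $\iota$ we need is that $\iota(a,b)E_0\iota(a',b')$ if and only if $aE_0a'$ and $bE_0b'$, which holds because two elements of $2^\omega$ are $E_0$-equivalent exactly when their subsequences along the even positions and along the odd positions are respectively $E_0$-equivalent. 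Then I would set
$$ f(x)_k=\begin{cases} \iota(s,x_0), & k=0,\\ \iota(t,x_k), & k>0,\\ \iota(t,s), & k<0,\end{cases} $$
and observe that each coordinate of $f(x)$ depends on $x$ in a Borel way, so $f$ is Borel.

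For the forward implication, assume $xE_0^\omega y$, i.e.\ $x_kE_0y_k$ for every $k<\omega$. For $k<0$ we have $f(x)_k=\iota(t,s)=f(y)_k$, and for $k\ge 0$ the coordinates $f(x)_k$ and $f(y)_k$ have the same first argument under $\iota$ while their second arguments are $E_0$-equivalent; so in all cases $f(x)_kE_0f(y)_k$, whence $f(x)\,E_{\mathcal{S}}(E_0)\,f(y)$ via the shift $n=0$. For the converse, assume $f(x)\,E_{\mathcal{S}}(E_0)\,f(y)$, witnessed by some $n\in\mathbb{Z}$ with $f(x)_{n+k}E_0f(y)_k$ for all $k\in\mathbb{Z}$. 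Comparing first arguments under $\iota$ (equivalently, even-position subsequences) and taking $k=0$ shows that the first argument of $f(x)_n$ is $E_0$-equivalent to $s$; but that first argument is $s$ when $n=0$ and $t$ when $n\ne 0$, and $\neg(sE_0t)$, so $n=0$. Hence $f(x)_kE_0f(y)_k$ for every $k\in\mathbb{Z}$, and comparing second arguments under $\iota$ (odd-position subsequences) for $k\ge 0$ gives $x_kE_0y_k$ for all $k<\omega$, i.e.\ $xE_0^\omega y$.

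I do not expect a genuine obstacle here; this is a routine encoding. The one point that needs care — and the crux of the construction — is the anchor coordinate $f(x)_0$, whose first argument $s$ occurs in no other coordinate: this is exactly what forces any shift witnessing $f(x)\,E_{\mathcal{S}}(E_0)\,f(y)$ to be trivial, and thereby makes the coordinatewise comparison faithful. (The map $f$ is in fact injective, though that is not needed for $\le_B$.)
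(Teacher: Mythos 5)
Your proof is correct and follows essentially the same route as the paper: encode the $\omega$-sequence on the nonnegative coordinates of a $\mathbb{Z}$-indexed sequence and plant a single anchor coordinate, not $E_0$-equivalent to the surrounding constant coordinates, so that any shift witnessing $E_{\mathcal{S}}(E_0)$-equivalence must be $0$, after which the comparison is coordinatewise. The paper implements the anchor more simply—$1^{(\omega)}$ at coordinate $-1$, $0^{(\omega)}$ at all coordinates $<-1$, and $x(n)$ at coordinates $n\ge 0$—which already forces the shift to be trivial by comparing coordinate $-1$ with coordinates below $-1$, so your interleaving tag is an unnecessary (but harmless) extra safeguard.
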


\begin{proof}

For $x\in (2^{\omega})^{\omega}$, let $$f(x)(n)=\left\{
	\begin{array}{ll}
x(n), &  \mbox{if $n\geq 0$, }\\
1^{(\omega)}, &  \mbox{if $n=-1$,}\\
0^{(\omega)}, &  \mbox{if $n<-1$,}\\
	\end{array}\right.$$
where $0^{(\omega)}$ and $1^{(\omega)}$ denote infinite sequence of only $0$s and $1$s, respectively.  

Then one can easily check that this $f$ is Borel and reduces $E_0^{\omega}$ to $E_0(E_0)$.
\end{proof}

\begin{proposition}
	$E_0^{\omega}\leq_B E_0(E_0)$.
\end{proposition}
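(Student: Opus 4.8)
The plan is to construct an explicit continuous reduction $f\colon(2^{\omega})^{\omega}\to(2^{\omega})^{\omega}$ witnessing $E_0^{\omega}\leq_B E_0(E_0)$. The conceptual point is that $E_0^{\omega}$ requires $E_0$-agreement in \emph{every} coordinate, while $E_0(E_0)$ requires it only in a \emph{cofinite} set of coordinates; to bridge this gap one rearranges, with repetitions, the coordinates of a sequence so that each original coordinate reappears along a set of indices cofinal in $\omega$. A tail condition on the rearranged sequence then reads off as a condition on every coordinate of the original.

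Concretely, I would fix a surjection $\pi\colon\omega\to\omega$ with $\pi^{-1}(\{n\})$ infinite for every $n$ (say $\pi(m)$ is the $2$-adic valuation of $m+1$, or $\pi(\langle i,j\rangle)=j$ for a fixed pairing function), and set $f(x)(m)=x(\pi(m))$. Since the $m$-th coordinate of $f(x)$ is a verbatim copy of the $\pi(m)$-th coordinate of $x$, the map $f$ is continuous, hence Borel. (This is a padding trick in the spirit of the proof of Proposition~\ref{prop:E3}, but simpler.)

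Checking that $f$ is a reduction is then immediate. If $x\,E_0^{\omega}\,y$, then $x(n)\,E_0\,y(n)$ for all $n$, so $f(x)(m)=x(\pi(m))\,E_0\,y(\pi(m))=f(y)(m)$ for all $m$, and in particular $f(x)\,E_0(E_0)\,f(y)$ (witnessed by $N=0$). Conversely, if $f(x)\,E_0(E_0)\,f(y)$, fix $N$ with $f(x)(m)\,E_0\,f(y)(m)$ for all $m>N$; given any $n$, choose $m\in\pi^{-1}(\{n\})$ with $m>N$, which is possible since the fibre is infinite, so that $x(n)=f(x)(m)\,E_0\,f(y)(m)=y(n)$. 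As $n$ was arbitrary, $x\,E_0^{\omega}\,y$.

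There is essentially no obstacle here; the only thing one must get right is that $\pi$ has \emph{infinite} fibres, which is exactly what collapses the ``$\exists N\,\forall m>N$'' of $E_0(E_0)$ into the ``$\forall n$'' of $E_0^{\omega}$. Alternatively, and without exhibiting a map, one can note that $E_0^{\omega}=\bigcup_n F_n$ where $F_n=\{(x,y)\colon\forall k\le n\ x(k)\,E_0\,y(k)\}$, that each $F_n$ is hyperfinite (it is Borel reducible, via $x\mapsto(x(0),\dots,x(n))$, to the finite power $E_0^{n+1}$, which is hyperfinite as a finite product of hyperfinite equivalence relations, hence an increasing union of finite ones), and that the construction in the proof of Proposition~\ref{hhf} produces a reduction of $\bigcup_n F_n$ into $E_0(E_0)$ using no countability hypothesis on $E$.
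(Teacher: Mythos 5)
Your construction is exactly the paper's: there, one fixes a sequence $(a_n)$ in which every natural number appears infinitely often and maps $x\mapsto(x(a_0),x(a_1),\dots)$, which is precisely your $\pi$ with infinite fibres, and your argument just spells out the verification the paper leaves as "clearly". Correct, and essentially the same approach.
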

\begin{proof}
Fix a sequence $a_0,a_1,a_2,\dots$ of natural numbers so that each natural number appears in the sequence infinitely many times.	Consider the map $f:(2^\omega)^\omega\to(2^\omega)^\omega$ defined by	
	\begin{equation*}
		f(x)=(x(a_0),x(a_1),x(a_2),\dots).
	\end{equation*} 
	This $f$ is clearly Borel and reduces $E_0^{\omega}$ to $E_0(E_0)$.
\end{proof}

To summarize, we have the following diagram of reducibility:

\begin{equation*}
	\xymatrix{
		E_0(E_0) &  & E_{\mathcal{S}}(E_0)\\
		E_1\ar[u] & & E_0^{\omega}\ar[u]\ar[ull]\\
		& E_0\ar[ur]\ar[ul]
	}
\end{equation*}

\section{The main dichotomy theorem}\label{sec:maindich}

We define the notion of compatibility for two Borel class-wise $\mathbb{Z}$-orders for a hyperfinite equivalence relation.

\begin{definition} Let $E$ be a hyperfinite Borel equivalence relation on a standard Borel space $X$. Let $(X,<)$ and $(X,<')$ be two Borel class-wise $\mathbb{Z}$-orders generating $E$. We say that $<$ and $<$ are {\em compatible} if there is a Borel complete section $X'\subseteq X$ such that on each $E$-class restricted to $X'$, either $<=<'$ or $<=>'$. Such an $X'$ is called a {\em $E$-monotonic} subset for $(<, <')$, or just $E$-monotonic, if $<$ and $<'$ are clear from the context. If $<$ and $<'$ are not compatible, we say they are {\em incompatible}.
\end{definition}

Let us first look at an example of an incompatible pair.

\begin{example} Consider the canonical hyperfinite equivalence relation $E_0$ on $2^\omega$. For $x, y\in 2^\omega$, define
$$\begin{array}{rcl}
  x<_0y &\iff& x(n)<y(n) \mbox{ for the largest $n$ such that $x(n)\neq y(n)$,} \\
  x<_1y &\iff& \mbox{for the largest $n$ such that $x(n)\neq y(n)$,} \\
   & &\mbox{$x(n)<y(n)$ if $n$ is odd and $y(n)<x(n)$ if $n$ is even.}
  \end{array}
  $$
  Then $<_0$ and $<_1$ are both Borel class-wise $\mathbb{Z}$-orders for $E_0$, and $<_0$ and $<_1$ are incompatible.
\end{example}

Here is an argument for the incompatibility of the pair $(<_0,<_1)$. As the space $2^{\omega}$ can be covered by countably many homeomorphic images of any complete section, any complete section cannot be meager. Hence it suffices to show that any non-meager subset of $2^\omega$ with the Baire property cannot be $E_0$-monotonic for $(<_0,<_1)$. Let $S$ be a non-meager subset of $2^\omega$ with the Baire property. Let $t\in 2^{<\omega}$ be of even length such that $S$ is comeager in $N_t=\{x\in 2^\omega\colon t\subseteq x\}$. It follows that
$$\begin{array}{l}\{x\in 2^\omega\colon t^\smallfrown 00^\smallfrown x\in S\}, \\
\{x\in 2^\omega\colon t^\smallfrown 10^\smallfrown x\in S\} \mbox{ and} \\
\{x\in 2^\omega\colon t^\smallfrown 11^\smallfrown x\in S\}
\end{array}$$ are all comeager in $2^\omega$ and thus has nonempty intersection. Take an $x$ from their intersection. Then $t^\smallfrown 00^\smallfrown x <_0 t^\smallfrown 10^\smallfrown x$ and $t^\smallfrown 00^\smallfrown x <_1 t^\smallfrown 10^\smallfrown x$, but $t^\smallfrown 10^\smallfrown x <_0 t^\smallfrown 11^\smallfrown x$ while $t^\smallfrown 11^\smallfrown x <_1 t^\smallfrown 10^\smallfrown x$. Thus $<_0$ and $<_1$ agree on some pairs of points in $S$, while disagree on other pairs of points from the same $E_0$-class in $S$. Therefore, $S$ is not $E$-monotonic.

Our main dichotomy theorem states that $(<_0,<_1)$ is a canonical obstruction to compatibility, in the following sense.

\begin{theorem}\label{thm:maindichotomy} Let $E$ be a hyperfinite Borel equivalence relation on a standard Borel space $X$. Let $<$ and $<'$ be Borel class-wise $\mathbb{Z}$-orders generating $E$. Then exactly one of the following holds:
	\begin{enumerate}
		\item[(I)] $<$ and $<'$ are compatible;
		\item[(II)] There is an injective Borel map $\theta\colon 2^{\omega}\to X$ such that $\theta$ reduces $E_0$ to $E$ and $\theta$ is order-preserving from $(<_0,<_1)$ to $(<,<')$, i.e., for any $x, y\in 2^\omega$, $x<_0 y\iff x<y$ and $x<_1y\iff x<'y$.
	\end{enumerate}

\end{theorem}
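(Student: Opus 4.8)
The plan is to prove Theorem~\ref{thm:maindichotomy} via an effective (lightface) analysis combined with Gandy--Harrington forcing, which is the standard methodology for $E_0$-dichotomies of Glimm--Effros type. First I would reduce to the effective setting: using standard facts, we may assume $X = 2^\omega$ (or a recursively presented Polish space), that $E$, $<$, and $<'$ are all $\Delta^1_1$, and work relative to a suitable parameter. The goal then becomes the technical theorem alluded to in the introduction (Section~4 of the paper): either there is a $\Delta^1_1$ $E$-monotonic complete section --- in which case (I) holds --- or else (II) holds. The dividing line will be whether a certain $\Sigma^1_1$ set, built from the points where $<$ and $<'$ are ``incompatibly entangled,'' is ``large'' in the Gandy--Harrington topology.

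The core construction for case (II) is a fusion/Cantor-scheme argument. I would define, for $s \in 2^{<\omega}$, a descending sequence of nonempty $\Sigma^1_1$ sets $U_s \subseteq X$ together with $\Delta^1_1$ partial maps so that: (a) the $U_s$ shrink to points, giving a continuous injection $\theta\colon 2^\omega \to X$; (b) for each $s$, points of $U_{s^\smallfrown 0}$ and $U_{s^\smallfrown 1}$ lie in the same $E$-class, so that $\theta$ reduces $E_0$ to $E$; and (c) the $<$- and $<'$-relationships between $U_{s^\smallfrown 0}$ and $U_{s^\smallfrown 1}$ match exactly the $<_0$- and $<_1$-relationships between $s^\smallfrown 0$ and $s^\smallfrown 1$ in $2^\omega$. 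Point (c) is where the specific definitions of $<_0$ and $<_1$ enter: recall that in $(<_0,<_1)$, passing from $s^\smallfrown 0$ to $s^\smallfrown 1$ reverses orientation precisely when the length of $s$ has a given parity, so at even levels $<$ and $<'$ must agree on the split and at odd levels they must disagree. The recursion therefore needs, at each node, to find within the current $\Sigma^1_1$ piece two $E$-related sub-pieces witnessing ``agree'' at even levels and ``disagree'' at odd levels; the hypothesis of case (II), namely the failure of (I), is exactly what guarantees such configurations persist cofinally --- this is the Gandy--Harrington genericity step, where one forces with nonempty $\Sigma^1_1$ sets and uses that the relevant witnessing relations are $\Sigma^1_1$ and, by the failure of compatibility, not confined to a $\Delta^1_1$ monotonic section.

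For case (I), I would argue the contrapositive: if (II) fails, then the bad configurations are in some sense ``small,'' and a reflection/exhaustion argument (peeling off $\Delta^1_1$ pieces on which $<$ and $<'$ are forced to be monotone, in the style of the first reflection theorem or a Glimm--Effros style exhaustion) produces a $\Delta^1_1$ set meeting every $E$-class on which $< = <'$ or $< = >'$. I also need mutual exclusivity of (I) and (II): this is essentially the Baire-category argument already given in the paper for the incompatibility of $(<_0,<_1)$ on $E_0$ --- if both held, pulling a monotonic Borel complete section $X'$ back along $\theta$ would yield a non-meager-in-some-$N_t$ Baire-property subset of $2^\omega$ that is $E_0$-monotonic for $(<_0,<_1)$, contradicting that argument.

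The main obstacle I anticipate is managing the \emph{bookkeeping of orientations} in the fusion: unlike the classical $E_0$-dichotomy where every split is ``symmetric,'' here the target pair $(<_0,<_1)$ alternates between agreement and disagreement depending on parity of the level, so the inductive hypothesis must simultaneously control two orders and their relative orientation at every node, and one must verify that the failure of (I) supplies witnesses of \emph{both} types (agreeing splits and disagreeing splits) densely --- not just one type. Making the genericity argument yield the disagreeing splits is precisely where incompatibility of $<$ and $<'$ is used, and threading this through the Gandy--Harrington forcing while keeping all the maps $\Delta^1_1$ and the limit map injective and order-preserving is the delicate part.
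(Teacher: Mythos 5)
Your overall strategy coincides with the paper's: relativize to $\Delta^1_1$ data, prove an effective dichotomy, build $\theta$ by a Gandy--Harrington fusion over the binary tree whose splits mirror $(<_0,<_1)$, get case (I) by reflection/exhaustion, and get mutual exclusivity by pulling a monotonic complete section back through $\theta$ and running the Baire-category argument. However, as written the proposal has two genuine gaps at exactly the load-bearing points. First, your conditions (a)--(c) only make $\theta$ an injective Borel \emph{homomorphism}: (b) guarantees that $E_0$-related branches land in one $E$-class, but nothing forces non-$E_0$-related branches into different $E$-classes, and injectivity does not supply this. The paper adds a fourth requirement to the fusion: working inside the $\Sigma^1_1$ set $Y$ of points lying in no $\Delta^1_1$ monotonic set, it takes the closure $\bar{E}$ of $E$ in the Gandy--Harrington square topology, uses the Harrington--Kechris--Louveau facts that $E$ is dense and meager in $\bar{E}\cap(Y\times Y)$, and arranges that all pairs of coordinates whose tree addresses differ in the last bit are generic for the product forcing, so that for $b$ not $E_0$-related to $b'$ the pair $(\theta(b),\theta(b'))$ is pair-generic and hence avoids the meager set $E$. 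Without an ingredient of this kind the constructed map need not be a reduction.

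Second, the step you flag as the "main obstacle" --- that failure of (I) supplies the agreeing/disagreeing splits at every stage --- is asserted as a genericity/cofinality phenomenon, but it is precisely the combinatorial heart and does not follow from genericity alone. The paper's argument defines $Y$ as above, notes via the First Reflection Theorem (monotonicity being $\Pi^1_1$ on $\Sigma^1_1$) that $Y$ contains no nonempty $\Sigma^1_1$ $E$-monotonic subset, and then argues by contradiction: if at some stage no pair of points in the current conditions realizes the required pattern of $<$ versus $<'$ (phrased via the $<_0$-extreme and $<_1$-extreme coordinates of the level), one reflects the failing statement to $\Delta^1_1$ supersets, shrinks by minimizing a class-wise diameter with respect to a counting metric, passes to a Borel maximal anticlique of a locally finite graph, and finally extracts, by taking $<'$-closest witnesses, a nonempty $\Sigma^1_1$ $E$-monotonic subset of $Y$ --- the desired contradiction. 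Some argument of this shape is indispensable and is absent from your outline; "incompatibility yields witnesses densely" is the thing to be proved, not an input. Two smaller corrections: in case (I) you should not expect a single $\Delta^1_1$ monotonic \emph{complete section}; the effective statement is pointwise (every $x$ lies in some $\Delta^1_1$ monotonic set), and one pastes countably many $\Delta^1_1$ pieces into a Borel monotonic complete section. And for exclusivity, pulling a monotonic complete section $X'$ back along $\theta$ directly can fail because $X'$ may meet the relevant classes outside the range of $\theta$; one first applies the paper's Proposition~\ref{prop:completesection} ((1)$\Rightarrow$(3)) to refine to a monotonic complete section inside the image of $\theta$ before pulling back.
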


For a Borel $E$-monotonic subset $A\subseteq X$, we say that $x$ is {\em $E$-class-wise large in $A$} if $[x]_E$ is finite, or $[x]_E$ is infinite and either $<\upharpoonright [x]_E\cap A$ is order-isomorphic to $<\upharpoonright [x]_E$ or $<'\upharpoonright [x]_E\cap A$ is order-isomorphic to $<'\upharpoonright [x]_E$ or both. We say that $A$ is {\em $E$-class-wise large} if every $x\in A$ is $E$-class-wise large in $A$.

The following proposition gives some equivalent characterizations of (I) in the above theorem. It will imply that (I) and (II) in the above theorem are mutually exclusive.

\begin{proposition}\label{prop:completesection} Let $E$ be a hyperfinite Borel equivalence relation on a standard Borel space $X$. Let $<$ and $<'$ be Borel class-wise $\mathbb{Z}$-orders generating $E$. Then the following are equivalent:
	\begin{enumerate}
		\item There is a Borel $E$-monotonic complete section.
        \item There is an $E$-class-wise large Borel $E$-monotonic complete section $A\subseteq X$.
		\item For every Borel complete section $A\subseteq X$, there is a Borel $E$-monotonic complete section $C\subseteq A$.
		\item For every Borel complete section $A\subseteq X$, there is an $E$-class-wise large Borel $E$-monotonic complete section $C\subseteq A$.
	\end{enumerate}
\end{proposition}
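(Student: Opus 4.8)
The implications $(4)\Rightarrow(3)$, $(4)\Rightarrow(2)$ and $(3)\Rightarrow(1)$ are obtained by specializing $A$ to $X$, and $(2)\Rightarrow(1)$ is immediate; so the real content is the implication $(1)\Rightarrow(4)$, and that is what I would prove (modulo routine handling of finite $E$-classes and of classes on which $<$ or $<'$ has order type $\omega$ or $\omega^{\ast}$). Fix a Borel $E$-monotonic complete section $X_0$ witnessing $(1)$. The first step is a normalization of orientation: the set $I=\{x:\exists y,z\in X_0\cap[x]_E\ (y<z\ \wedge\ z<'y)\}$ is Borel and $E$-invariant (existential quantification over the countable Borel set $[x]_E$ preserves Borelness), and replacing $<'$ by the Borel class-wise $\mathbb Z$-order that agrees with $<'$ off $I$ and with the class-wise reverse of $<'$ on $I$ alters neither the family of $E$-monotonic sets nor the property of being $E$-class-wise large. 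So I may assume $<$ and $<'$ restrict to the same linear order on $X_0\cap C$ for every $E$-class $C$.

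Next I would reformulate $(1)\Rightarrow(4)$ in terms of the \emph{agreement partial orders} $\prec$ and $\prec'$ on $X$ given by $x\prec y\iff xEy\ \wedge\ x<y\ \wedge\ x<'y$ and $x\prec'y\iff xEy\ \wedge\ x<y\ \wedge\ y<'x$: these are Borel strict partial orders, a subset $S$ of an $E$-class is a $\prec$-chain exactly when $<\upharpoonright S\ =\ <'\upharpoonright S$ and a $\prec'$-chain exactly when $<\upharpoonright S\ =\ >'\upharpoonright S$, and by the normalization each $X_0\cap C$ is a $\prec$-chain. Thus, given an arbitrary Borel complete section $A$, I must find a Borel complete section $C\subseteq A$ that on each $E$-class is a $\prec$-chain or a $\prec'$-chain and that is cofinal and coinitial in $(C,<)$ on every infinite class. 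The corresponding fact for a \emph{single} class is unconditional: viewing $<'\upharpoonright C$ as a permutation of an interval of $\mathbb Z$ relative to $<\upharpoonright C$, a brief case analysis on the boundedness of the $<$-values at the two ends of $C$ produces a monotone-in-both-orders subsequence that is cofinal and coinitial. Hence hypothesis $(1)$ is needed only to make the selection of such sets Borel and uniform across classes — contrast the pair $(<_0,<_1)$ on $E_0$, where $(1)$ fails and yet every class still admits such a subsequence.

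To get the uniform selection I would invoke hyperfiniteness: fix $E=\bigcup_nE_n$ with $E_n$ finite Borel, $E_n\subseteq E_{n+1}$, each $E_n$-class a $<$-interval, and — after passing to a tail and applying countable uniformization to $X_0$ and to $A$ — with each $E_0$-class meeting both $X_0$ and $A$. By recursion on $n$ I would build Borel finite selections $C_n\subseteq A$ so that on each $E_n$-class $D$ the set $C_n\cap D$ is a $\prec$- or $\prec'$-chain carrying one recorded orientation; when two $E_n$-classes are amalgamated into an $E_{n+1}$-class I would merge their two chains into a single chain on the union, discarding the points forced out across the seam but always keeping $C_n\cap D$ large relative to $X_0\cap D$, so that the limit $C=\bigcup_nC_n$ remains cofinal and coinitial on every infinite class. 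The point of carrying $X_0$ along is that it is a genuine Borel object which is class-wise a $\prec$-chain and which therefore serves, uniformly in the class, as the scaffold dictating which orientation to commit to and how far in each direction to reach — exactly as in a finite-injury construction. The resulting $C$ is then a Borel $E$-monotonic complete section contained in $A$, and the largeness invariant makes it $E$-class-wise large, which is $(4)$.

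The step I expect to be the main obstacle is the amalgamation across seams in this recursion: merging two already-committed chains into a single monotone chain Borel-uniformly while provably never starving an infinite class of cofinal or of coinitial representatives. This is the only place where $(1)$ is used in an essential, non-local way, and getting the bookkeeping right — deciding uniformly when to keep the left orientation, when to switch to the right, and which seam points to sacrifice — is the technical core; the normalization, the reformulation via $\prec$, the unconditional one-class combinatorics, and the extraction of the hyperfinite skeleton are all routine.
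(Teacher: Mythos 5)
Your reduction of the proposition to the single implication $(1)\Rightarrow(4)$, the orientation normalization, and the observation that the one-class combinatorics is unconditional are all fine. But the proposal stops exactly where the mathematical content lies: you never actually produce the uniform Borel refinement of an arbitrary complete section $A$, you only sketch a recursion along a hyperfinite filtration and then explicitly flag its amalgamation step as the unresolved ``technical core.'' That step is a genuine gap, not bookkeeping. As written it is not even coherent in the limit: if points are ``discarded across the seam'' at later stages, then $C=\bigcup_n C_n$ contains the discarded points and need not be monotonic, so you would need a stabilization (finite-injury) argument showing each point's membership settles, that the limit is still a complete section, and that no infinite class is starved of cofinal or coinitial representatives -- and the scaffold $X_0$ enters your merging decisions only through vague language, with no concrete mechanism that bounds how much of a class a merge can destroy. (Smaller issues: arranging the $E_n$-classes to be $<$-intervals needs an argument, and ``largeness'' is about the order type of the selected set inside each class, not of the class itself, so it is not disposed of by your ``routine handling'' caveat; the paper's $(1)\Rightarrow(2)$ step needs a selector, a $2$-coloring, a pigeonhole argument, and a nontrivial contradiction about $<'$-least elements to get class-wise largeness.)

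For comparison, the paper's route through $(1)\Rightarrow(2)\Rightarrow(3)$ avoids any stagewise merging. Given a class-wise large monotonic section $Y$ and an arbitrary complete section $A$, it defines a metric $d$ on each class by counting $Y$-points between two points, takes the nearest-$Y$-point maps $\sigma,\sigma'$ for $<$ and $<'$, shrinks $A$ to the set $A_0$ of points minimizing $d(\sigma(x),\sigma'(x))$ over the class, and then defines a Borel graph $R$ on $A_0$ recording order-disagreement relative to the orientation of $Y$ on the class. The minimization is exactly what makes $R$ locally finite, and a Borel maximal $R$-anticlique (via \cite[Lemma 1.17]{JKL2002}) is then automatically a monotonic complete section inside $A$. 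Something playing the role of this distance-minimization plus locally-finite-graph argument (or an equally substantive alternative) is what your proposal is missing; without it, the implication $(1)\Rightarrow(3)$/$(4)$ has not been proved.
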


\begin{proof}
	Clearly (4)$\Rightarrow$(1). We show (1)$\Rightarrow$(2)$\Rightarrow$(3)$\Rightarrow$(4).

For (1)$\Rightarrow$(2), let $Y\subseteq X$ be a Borel $E$-monotonic complete section. Let $Y'$ be the set of all $y\in Y$ that are not $E$-class-wise large in $Y$.
Then for $y\in Y$, $y\in Y'$ if and only if $[y]_E$ is infinite and exactly one of the following holds:
\begin{enumerate}
\item[(i)] $[y]_E\cap Y$ is finite, and at least one of the following happens:
\begin{itemize}
\item there is either a $<$-least element or a $<$-largest element, but not both, of $[y]_E$;
\item there is either a $<'$-least element or a $<'$-largest element, but not both, of $[y]_E$;
\end{itemize}
\item[(ii)] both $<\upharpoonright [y]_E$ and $<'\upharpoonright [y]_E$ are isomorphic to $\mathbb{Z}$, and there is either a $<$-least element or a $<$-largest element, or both, of $[y]_E\cap Y$.
\end{enumerate}
$Y'$ is Borel. If $Y'=\varnothing$ then there is nothing to prove. Thus we assume $Y'\neq\varnothing$. Let $X'=[Y']_E$. Then $X'$ is a standard Borel space, every $E\upharpoonright X'$-class is infinite, and $Y'$ is a Borel complete section of $E\upharpoonright X'$. To prove (2) it suffices to find a Borel $E\upharpoonright X'$-monotonic complete section $A\subseteq X'$ such that for any $x\in X'$, either $<\upharpoonright [x]_E\cap A$ is order-isomorphic to $<\upharpoonright [x]_E$ or $<'\upharpoonright [x]_E\cap A$ is order-isomorphic to $<\upharpoonright [x]_E$ or both. For notational simplicity, assume $Y'=Y$ and $X'=X$. By picking the least or the largest element as previously mentioned, we obtain a Borel selector $\sigma:X\to X$ for $E$, i.e., a Borel function $\sigma$ such that for all $x\in X$, $\sigma(x)Ex$ and if $x,y\in X$, $\sigma(x)=\sigma(y)$.

For every $x\in X$, define a $2$-coloring
$$ c(x)=\left\{\begin{array}{ll} 0, & \mbox{if either ($x<\sigma(x)$ and $x<'\sigma(x)$) or ($\sigma(x)<x$ and $\sigma(x)<'x$),} \\
1, & \mbox{otherwise.}
\end{array}\right.
$$
Then $c$ is Borel. For any Borel infinite complete section $C\subseteq X$, by the pigeonhole principle, there is a Borel infinite complete section $C'\subseteq C$ such that for any $x\in X$ (as checking whether there are infinitely $x'\in[x]_E$ so that $c(x')=0$ is Borel), $c$ is constant on $[x]_E\cap C'$. $C'$ is an $E$-monotonic set.

Let $X_1$ be the set of all $x\in X$ such that either $<\upharpoonright [x]_E$ or $<'\upharpoonright [x]_E$ is not order-isomorphic to $\mathbb{Z}$. Then $X_1$ is an $E$-invariant Borel subset of $X$. Consider $c\upharpoonright X_1$. There is a Borel infinite complete section $A_1\subseteq X_1$ such that for any $x\in X_1$, $c$ is constant on $[x]_E\cap A_1$. $A_1$ is an $E$-class-wise large $E$-monotonic complete section of $X_1$.

Let $X_2=X\setminus X_1$. Then $x\in X_2$ if and only if both $<\upharpoonright [x]_E$ and $<'\upharpoonright [x]_E$ are order-isomorphic to $\mathbb{Z}$. We note that, for any $x\in X_2$, at least one of the following holds:
\begin{enumerate}
\item[(a)] there are infinitely many $y\in[x]_E$, $y>\sigma(x)$, such that $c(y)=0$, and there are infinitely many $y\in[x]_E$, $y<\sigma(x)$, such that $c(y)=0$;
\item[(b)] there are infinitely many $y\in[x]_E$, $y>\sigma(x)$, such that $c(y)=1$, and there are infinitely many $y\in[x]_E$, $y<\sigma(x)$, such that $c(y)=1$.
\end{enumerate}
Toward a contradiction, assume neither (a) nor (b) holds for some $x\in X_2$. Then there are $x_0, x_1\in [x]_E$, $x_0\leq \sigma(x)\leq x_1$ such that $c(y)$ is constant for all $y>x_1$ and $c(z)$ is constant for all $z<x_0$, but $c(y)\neq c(z)$ for any $z<x_0\leq x_1<y$. For definiteness, assume $c(y)=0$ for $y>x_1$ and $c(z)=1$ for $z<x_0$. Then each of the sets $\{y\in[x]_E\colon y<x_0\}$, $\{y\in[x]_E\colon x_0\leq y\leq x_1\}$, and $\{y\in[x]_E\colon y>x_1\}$ has a $<'$-least element, which implies that $[x]_E$ has a $<'$-least element, a contradiction.

Now assuming one between (a) or (b) holds, we obtain a Borel $E$-monotonic infinite complete section $A_2$ of $X_2$ such that for each $x\in X_2$, both $<\upharpoonright [x]_E\cap A_2$ and $<'\upharpoonright [x]_E\cap A_2$ are order-isomorphic to $\mathbb{Z}$.

Let $A=A_1\cup A_2$. Then $A$ is as required in (2).

Next we prove (2)$\Rightarrow$(3). For this, fix a Borel complete section $Y\subseteq X$ as in (2), as well as another arbitrary Borel complete section $A\subseteq X$. We construct $C$.

Let $X'$ be the set of all $x\in X$ such that the orders $<\upharpoonright [x]_E\cap Y$, $<'\upharpoonright [x]_E\cap Y$, $<\upharpoonright [x]_E\cap A$ and $<'\upharpoonright [x]_E\cap A$ are all order-isomorphic to $\mathbb{Z}$. $X'$ is a Borel $E$-invariant subset of $X$. If $X\neq X'$ then there is a Borel selector $\sigma$ for $X\setminus X'$, and we may assume $\sigma: X\setminus X'\to A$. Then the set $C_0=\{\sigma(x)\colon x\in X\setminus X'\}\subseteq A$ is a Borel $E$-monotonic complete section of $X\setminus X'$. For notational simplicity, we assume $X=X'$ for the rest of the proof.

	For $x,y\in X$ with $xEy$, define
$$d(x,y)=\left\{\begin{array}{ll} 0, &\mbox{ if $x=y$,} \\
|\{z\in Y\colon x<z<y \mbox{ or } y<z<x\}|+1, & \mbox{ otherwise.}
\end{array}\right. $$
Then $d$ is Borel and is a metric on every $[x]_E$.

For each $x\in A$, let $\sigma(x)=\sup_<\{y\in Y\colon y\leq x\}$ and $\sigma'(x)=\sup_{<'}\{y\in Y\colon y\leq' x\}$. Let $h(x)=\min\{d(\sigma(y),\sigma'(y))\colon yEx\}$. Shrink $A$ to
$$A_0=\{x\in A\colon d(\sigma(x),\sigma'(x))=h(x)\},$$ which is clearly still a Borel complete section.

Define a binary relation $R\subseteq E\upharpoonright A$ on $A$ by
$$ R'(x,y)\iff \left\{\begin{array}{ll} x<y\mbox{ and } y<'x, & \mbox{if $<=<'$ on $[x]_E\cap Y$,}\\ x<y\mbox{ and } x<'y, & \mbox{if $<=>'$ on $[x]_E\cap Y$.}
\end{array}\right.
$$
Let $R$ then be the symmetric closure of $R'$. We claim that $R$ is locally finite on $A_0$, i.e., for each $x\in A_0$ there are only finitely many $y\in A_0$ with $R(x,y)$ or $R(y,x)$. To see this, let $(x,y)\in R\upharpoonright A_0$. Then $h(x)=h(y)=d(\sigma(x),\sigma'(x))=d(\sigma(y),\sigma'(y))$. For definiteness, suppose $<=<'$ on $[x]_E\cap Y$ and $R'(x,y)$. Since $x<y$, we have $\sigma(x)\leq \sigma(y)$. Since $y<'x$, we have $\sigma'(y)\leq'\sigma'(x)$. Since $\sigma'(y),\sigma(x)\in [x]_E\cap Y$ and $<=<'$ on $[x]_E\cap Y$, we have $\sigma'(y)\leq \sigma'(x)$. Considering the points $x, \sigma(x), \sigma'(x), \sigma(y), \sigma'(y)$ in the $<$-order, we conclude that
\begin{equation*}
	\begin{array}{ll} 
			 & d(x,\sigma(y))\\
		\leq & d(x,\sigma(x))+d(\sigma(x),\sigma'(x))-d(\sigma'(x),\sigma'(y))+d(\sigma'(y),\sigma(y))\\
		\leq & d(\sigma(x),x)+2h(x). 
	\end{array}
\end{equation*}
Now for a fixed $x$, there are finitely many $z$ such that $d(x,z)\leq d(\sigma(x),x)+2h(x)$, and for each $z\in Y$ there are finitely many $y$ such that $\sigma(y)=z$. Therefore $R$ is locally finite. The cases where $R(y,x)$ holds or $<=>'$ are similar.
	
Now we can obtain a Borel maximal $R$-anticlique $C\subseteq A_0$ (see \cite[Lemma 1.17]{JKL2002}), which is a Borel $E$-monotonic complete section.	

The proof of (3)$\Rightarrow$(4) is identical to the proof of (1)$\Rightarrow$(2).
\end{proof}

With this proposition in mind, we notice that if both (I) and (II) in Theorem~\ref{thm:maindichotomy} hold, then by (1)$\Rightarrow$(3) of the above proposition we can construct a Borel $E$-monotonic complete section in the image of $\theta$ and pull it back to $2^{\omega}$, resulting in a contradiction.

\section{The technical theorem}
Our strategy to prove the main dichotomy theorem is to prove the following effective version of the main dichotomy theorem which we call the Technical Theorem. We state this theorem below in the non-relativized form but from the proof it will be clear that this theorem can be relativized.

\begin{theorem}[The Technical Theorem]\label{effective}
	Let $X$ be a recursively presented Polish space, let $E$ be a $\Delta^1_1$ equivalence relation on $X$ which is generated by $\Delta^1_1$ class-wise $\mathbb{Z}$-orders $<$ and $<'$. Then exactly one of the following holds:
	\begin{enumerate}
		\item[(I)] For every $x\in X$ there is an $\Delta^1_1$ $E$-monotonic subset $S\subseteq X$ so that $x\in S$;
		\item[(II)] There is an injective continuous map that $\theta\colon 2^{\omega}\to X$ such that $\theta$ reduces $E_0$ to $E$ and $\theta$ is order-preserving from $(<_0, <_1)$ to $(<, <')$.
	\end{enumerate}
	
\end{theorem}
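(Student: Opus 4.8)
The plan is to prove the Technical Theorem by a Gandy--Harrington forcing argument, following the now-standard template for $E_0$-dichotomies (as in the Harrington--Kechris--Louveau proof of the Glimm--Effros dichotomy, or Kanovei's treatment), but carrying along the two orderings $<$ and $<'$ as extra structure. First I would dispose of the trivial direction: $(\mathrm{I})$ and $(\mathrm{II})$ are mutually exclusive exactly as noted after Proposition~\ref{prop:completesection}, since a continuous order-preserving embedding of $(<_0,<_1)$ would pull back any $E$-monotonic $\Delta^1_1$ set containing a point of the range to a $\Delta^1_1$ (hence Baire-property) $E_0$-monotonic set containing a corresponding point, contradicting the Example. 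So assume $(\mathrm{I})$ fails, i.e.\ there is some $x_0\in X$ lying in \emph{no} $\Delta^1_1$ $E$-monotonic set, and build $\theta$.

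The construction: let $U$ be the largest $\Sigma^1_1$ set whose every point fails to lie in a $\Delta^1_1$ $E$-monotonic set; equivalently, work inside the $\Sigma^1_1$ set $X^\ast$ of all such points, which is nonempty and $E$-invariant, and force with its nonempty $\Sigma^1_1$ subsets (Gandy--Harrington forcing on $X^\ast$). The key combinatorial fact I would establish is an \emph{amalgamation/splitting lemma}: for any nonempty $\Sigma^1_1$ set $A\subseteq X^\ast$ there exist nonempty $\Sigma^1_1$ sets $A_0,A_1\subseteq A$ together with a $\Delta^1_1$ partial injection $g$ with $A_1\subseteq\dom(g)$, $g(A_1)\subseteq A_0$, such that for every $a\in A_1$ we have $a\mathrel{E}g(a)$, $g(a)<a$ \emph{and} $a<'g(a)$ (i.e.\ $g$ moves points the way the canonical incompatible pair $(<_0,<_1)$ on $E_0$ does: it decreases in $<$ but increases in $<'$). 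The point is that if no such splitting existed below some condition $A$, then on $A$ the relation ``$y<z$ and $z<'y$'' would be (essentially) trivial in a way that lets one manufacture a $\Delta^1_1$ $E$-monotonic complete section of $[A]_E$ — running the machinery of Proposition~\ref{prop:completesection}, in particular the local-finiteness-of-$R$ argument and the maximal-anticlique extraction — inside a $\Delta^1_1$ hull of $A$, contradicting $A\subseteq X^\ast$. I expect this lemma, and specifically verifying that the failure of splitting really does yield a $\Delta^1_1$ monotonic section (the effective content of the $(1)\Rightarrow(3)$ direction), to be the main obstacle; one has to be careful that all the selectors, the metric $d$, the function $h$, the relation $R'$, and the anticlique live at the $\Delta^1_1$ level relative to a code for $A$.

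Granting the splitting lemma, I would do the usual fusion: build a Cantor scheme $(A_s)_{s\in 2^{<\omega}}$ of nonempty $\Sigma^1_1$ subsets of $X^\ast$ with $A_s\supseteq A_{s^\smallfrown 0}\cup A_{s^\smallfrown 1}$, $\diam(A_s)\to 0$ in a fixed complete metric so that $\bigcap_n A_{x\upharpoonright n}$ is a single point $\theta(x)$, and arrange, via the splitting lemma applied coordinate by coordinate together with a bookkeeping of $\Delta^1_1$ partial injections $g_s$, that: (i) $\theta$ is continuous and injective; (ii) whenever $x\mathrel{E_0}y$ with $x\neq y$, composing the relevant $g_s^{\pm 1}$ along the finite stretch where $x$ and $y$ differ shows $\theta(x)\mathrel{E}\theta(y)$, and tracking the $<$/$<'$ behaviour of each $g_s$ gives $x<_0 y\iff \theta(x)<\theta(y)$ and $x<_1 y\iff \theta(x)<'\theta(y)$; (iii) a standard ``no unintended identifications'' argument (using that distinct branches end up in disjoint $\Sigma^1_1$ pieces, via the Gandy--Harrington topology being $T_1$ enough, or an explicit disjointness built into the scheme) gives $\neg(x\mathrel{E_0}y)\Rightarrow\neg(\theta(x)\mathrel{E}\theta(y))$. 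This is routine once the orderings are threaded correctly through the scheme; the only genuine care needed is to make the $<_0$/$<_1$-bookkeeping at level $s$ match the parity convention in the definition of $<_1$, which is precisely what the asymmetric ``$g(a)<a$ and $a<'g(a)$'' shape of the splitting lemma is designed to supply. Finally, from $\theta$ continuous and $\Delta^1_1$-in-parameters one reads off the statement of Theorem~\ref{thm:maindichotomy} by the standard relativization argument, and Theorem~\ref{thm:main2} follows since $E_0$ with $(<_0,<_1)$ is $\Delta^1_1$.
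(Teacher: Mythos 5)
Your overall architecture (Gandy--Harrington forcing, a splitting lemma proved by contradiction via producing a monotonic set, then fusion) is the right family of argument, but the proposal misplaces the difficulty and, as stated, the two load-bearing steps have gaps. First, your splitting lemma is a statement about single pairs, and only in the ``disagreement'' direction ($g(a)<a$, $a<'g(a)$); the construction also needs agreement-type configurations at alternate levels to match the parity in $<_1$, and, much more seriously, pairwise moves do not suffice. At stage $n+1$ one must position an entire $2^n$-block of points against another entire $2^n$-block so that \emph{all} cross pairs are simultaneously correctly related under both $<$ and $<'$; by transitivity this reduces to controlling the $<$- and $<'$-extreme points of the two blocks, and proving that such a correctly positioned pair of blocks exists below any pair of conditions is the technical heart of the theorem. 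In the paper this is the claim that one can find $x,y$ with $x(11\dots1)<y(00\dots0)$ together with the appropriate $<'$-relation between the $<'$-extremes, and its proof is a genuinely nontrivial contradiction argument: reflect the failing statement to $\Delta^1_1$ conditions, shrink to minimal $d$-diameter, pass to a $\Delta^1_1$ maximal anticlique of a locally finite graph, and then extract a ``no intermediate $z$'' set $q_n$ whose projection is a nonempty $\Sigma^1_1$ $E$-monotonic subset of $Y$, contradicting (via the First Reflection Theorem) that $Y$ meets no $\Delta^1_1$ monotonic set. Your plan relegates exactly this block-level coherence to ``routine bookkeeping,'' while the step you flag as the main obstacle -- that failure of single-pair splitting yields a monotonic set -- is in fact nearly immediate (if no crossing pair with the prescribed joint behaviour exists in a $\Sigma^1_1$ set $S\subseteq Y$, then on each class $<$ and $<'$ agree, or reverse-agree, on $S$, so $S$ itself is monotonic and one reflects). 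Note also that your stated contradiction does not close as written: a $\Delta^1_1$ monotonic complete section of $[A]_E$ living in a $\Delta^1_1$ hull of $A$ need not contain any point of $X^\ast$, so it does not contradict $A\subseteq X^\ast$; one must produce a monotonic $\Sigma^1_1$ set \emph{inside} $X^\ast$ and then reflect. (Similarly, your one-line exclusivity argument is vacuous as stated, since singletons are monotonic; exclusivity needs the complete-section upgrade of Proposition~\ref{prop:completesection}.)

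Second, your step (iii) is not a correct mechanism for the reduction property. Disjointness of the $\Sigma^1_1$ pieces along distinct branches only yields injectivity of $\theta$; it does not prevent $\theta(b)\mathrel{E}\theta(b')$ for non-$E_0$-related $b,b'$, and finite-level order information cannot decide this either (consider $b=000\dots$, $b'=111\dots$: at every level the approximations are comparable with a stable direction). Since $<$ linearly orders each class, one must actively force such pairs \emph{out} of $E$: the paper does this by threading a second family of dense open sets (its property (4)) so that each such pair is $\mathsf{P}_1\times\mathsf{P}_1$-generic relative to $\bar{E}\cap(Y\times Y)$, where $\bar{E}$ is the Gandy--Harrington closure of $E$, and then invoking the fact that $E$ is meager (and dense) in $\bar{E}$ on $Y\times Y$ \cite{HKL}. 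Without this $\bar{E}$/meagerness ingredient -- or some substitute Mycielski-style argument built into your scheme -- your $\theta$ is only guaranteed to be an injective order-preserving homomorphism, not a reduction of $E_0$ to $E$.
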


We then obtain the following corollary, from which the main dichotomy theorem follows immediately because all Polish spaces of the same cardinality are isomorphic as standard Borel spaces.

\begin{corollary} Let $X$ be a recursively presented Polish space, let $E$ be a hyperfinite Borel equivalence relation on $X$, and let $<$ and $<'$ be Borel class-wise $\mathbb{Z}$-orders on $X$ generating $E$. Then exactly one of following holds:
	\begin{enumerate}
		\item[(I)] $<$ and $<'$ are compatible;
		\item[(II)] There is an injective continuous map $\theta\colon 2^{\omega}\to X$ such that $\theta$ reduces $E_0$ to $E$ and $\theta$ is order-preserving from $(<_0, <_1)$ to $(<, <')$.
	\end{enumerate}
	
\end{corollary}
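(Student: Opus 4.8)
The plan is to derive the corollary directly from the Technical Theorem by reconciling the two versions of condition (I). The alternative (II) is literally the same in both statements, so it suffices to show that, for a hyperfinite Borel $E$ on a recursively presented Polish space $X$ with Borel class-wise $\mathbb{Z}$-orders $<,<'$, clause (I) of the Technical Theorem---``for every $x\in X$ there is a $\Delta^1_1$ $E$-monotonic set containing $x$''---is equivalent to the statement that $<$ and $<'$ are compatible. The nontrivial direction is that the pointwise covering by $\Delta^1_1$ $E$-monotonic sets yields a single Borel $E$-monotonic complete section, after which Proposition~\ref{prop:completesection} (specifically $(1)\Rightarrow(1)$, trivially, but really we just need a complete section) hands us compatibility. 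The converse is easy: if $<,<'$ are compatible, then there is a Borel $E$-monotonic complete section $X'$, and Proposition~\ref{prop:completesection} $(1)\Rightarrow(3)$ lets us find a Borel $E$-monotonic complete section inside \emph{any} Borel complete section; in particular, relativizing the proof of the Technical Theorem to a parameter coding $E,<,<',X'$, every $x$ lies in some $\Delta^1_1$($z$) $E$-monotonic set for a suitable parameter $z$, and since the Technical Theorem is stated as relativizable, this is what clause (I) of the relativized Technical Theorem asserts.

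So the core step is: assuming every $x\in X$ belongs to some $\Delta^1_1$ (in a fixed real parameter, after relativization) $E$-monotonic set, produce a \emph{Borel} $E$-monotonic complete section. First I would invoke the effective perfect set / $\Delta^1_1$-enumeration machinery: the family of $\Delta^1_1(z)$ subsets of $X$ is ($\Pi^1_1(z)$-)enumerable, say as $(S_n)_{n<\omega}$, and the predicate ``$S_n$ is $E$-monotonic'' is itself $\Pi^1_1(z)$ (being $E$-monotonic says: on each $E$-class, $<$ and $<'$ restricted to $S_n$ either coincide or are reverses of each other, which is a co-projective condition over the class). Hence the union $\bigcup\{S_n : S_n \text{ is } E\text{-monotonic}\}$ is $\Sigma^1_2(z)$, and by hypothesis it is all of $X$. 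But being $E$-monotonic is not quite preserved under unions, so I cannot simply take this union as the section. Instead I would do the standard reduction-to-a-Borel-transversal-of-pieces trick: define a Borel map $x\mapsto n(x)$ picking, for each $x$, an index $n$ with $x\in S_n$ and $S_n$ $E$-monotonic, using $\Pi^1_1$-uniformization (Kondô) applied to the $\Pi^1_1(z)$ set $\{(x,n): x\in S_n \wedge S_n \text{ $E$-monotonic}\}$; the resulting uniformizing function is $\Delta^1_1(z)$ on its domain, which is all of $X$. This partitions $X$ into the Borel pieces $X_n = \{x : n(x)=n\}$, each contained in the $E$-monotonic set $S_n$. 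Now the finitely-many-overlaps problem disappears because within each $E$-class only countably many $X_n$ appear; but the $E$-classes can still be split among several pieces with incompatible local orientations. To fix this, I would pass to the $E$-saturations and argue as in the proof of $(1)\Rightarrow(2)$ of Proposition~\ref{prop:completesection}: first handle classes that some single $X_n$ meets ``largely'' (i.e.\ in an order-cofinal-and-coinitial way for one of $<,<'$), restricting to that $X_n$; then on the remaining classes each $X_n$ meets the class in a bounded piece, and a bounded $E$-monotonic piece together with a Borel selector gives a Borel $E$-monotonic complete section on that part. Taking the union over the two cases produces the desired Borel $E$-monotonic complete section of $X$.

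With the Borel $E$-monotonic complete section in hand, compatibility of $<$ and $<'$ is immediate by definition. Conversely, as noted, compatibility plus Proposition~\ref{prop:completesection} plus relativizability of the Technical Theorem gives clause (I) of the Technical Theorem relative to a parameter; and since a $\Delta^1_1(z)$ set is in particular Borel, we are in case (I) of the corollary. Exclusivity of (I) and (II) is exactly the remark following Proposition~\ref{prop:completesection}: if both held, pull a Borel $E$-monotonic complete section inside $\operatorname{range}(\theta)$ back along $\theta$ to get a Borel $E_0$-monotonic complete section of $2^\omega$, contradicting the incompatibility of $(<_0,<_1)$ established after the example.

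I expect the main obstacle to be the bookkeeping in the descent from ``pointwise $\Delta^1_1$ $E$-monotonic cover'' to ``single Borel $E$-monotonic complete section'': the $\Delta^1_1$ sets witnessing monotonicity at different points may orient the orders oppositely on a shared $E$-class, so one cannot naively union or even patch them along a Borel transversal without first stratifying the classes by how ``large'' a single monotonic piece can be inside them---exactly the $X_1$/$X_2$ dichotomy and the two-coloring argument from Proposition~\ref{prop:completesection}. The effective ingredients ($\Pi^1_1$ enumeration of $\Delta^1_1(z)$ sets, $\Pi^1_1$ on $\Pi^1_1$, Kondô uniformization, and the relativizability of the Technical Theorem) are standard, so once the stratification is organized correctly the rest is routine; the reverse implication and the exclusivity clause are short.
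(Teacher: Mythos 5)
Your proposal is correct in outline and matches the paper's strategy: alternative (II) is literally that of the Technical Theorem (Theorem~\ref{effective}), the only real work is to turn its clause (I) into compatibility by producing a single Borel $E$-monotonic complete section from the countably many $\Delta^1_1$ $E$-monotonic sets, and exclusivity of (I) and (II) is exactly the pullback remark after Proposition~\ref{prop:completesection}. Where you genuinely diverge is in how that complete section is assembled, and here the paper is much lighter: it just enumerates the $\Delta^1_1$ $E$-monotonic sets as $S_0,S_1,\dots$ (no effective enumeration is needed, only that there are countably many) and disjointifies by $E$-saturations, taking $A_0=S_0$ and $A_{n+1}=S_{n+1}\setminus[A_0\cup\dots\cup A_n]_E$; since distinct pieces then meet disjoint families of $E$-classes, the union is automatically $E$-monotonic, and it is a complete section because every point lies in some $S_n$. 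Your route --- $\Pi^1_1$ coding of the $\Delta^1_1$ sets, uniformization (Kond\^{o} is overkill; number uniformization suffices) to get a Borel partition $X=\bigcup_n X_n$ with $X_n\subseteq S_n$, then a per-class stratification to resolve orientation conflicts --- can be made to work, but it buys nothing, since a complete section need not cover $X$ and one may simply keep one piece per class; moreover, as sketched your dichotomy has a slip: on a class where no single $X_n$ is both cofinal and coinitial it does not follow that every $X_n$ meets the class boundedly (a piece may be unbounded in one direction only), and the repair is the extremal-element selector device from the proof of Proposition~\ref{prop:completesection}, which you should make explicit. Finally, the converse direction (compatibility implies clause (I) of the relativized Technical Theorem) is not needed for the corollary --- the disjunction comes from the Technical Theorem and exclusivity from the pullback argument, both of which you supply --- and your justification of it (covering all of $X$ by $\Delta^1_1(z)$ $E$-monotonic sets for one fixed parameter $z$) is not obvious as stated, so it is best dropped rather than asserted.
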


\begin{proof}
	By relativization, we may assume without loss of generality that $E$, $<$ and $<'$ are $\Delta^1_1$. Note that the second alternate is the same as in Theorem~\ref{effective}. So we only need to show that (I) of Theorem~\ref{effective} implies the first alternate of this corollary.
	
	Suppose (I) of Theorem~\ref{effective} holds. Let $\mathcal{F}=\{S\in\Delta^1_1\colon \mbox{$S\subseteq X$ is $E$-monotonic}\}$. Since there are only countably many $\Delta^1_1$ subsets, we can enumerate the elements of $\mathcal{F}$ as $S_0, S_1,\dots$. For every $x\in X$ there is $n\in\omega$ so that $x\in S_n$.
	
	To construct a Borel $E$-monotonic complete section, we inductively define
$$ \begin{array}{l}
		A_0=S_0, \\
		A_{n+1}=S_n\setminus [A_n]_{E}.
\end{array}
$$
Then each $A_n$ is $E$-monotonic and Borel. Additionally,  $[A_n]_E$ are Borel, $E$-invariant and pairwise disjoint. Therefore $A=\bigcup_{n<\omega} A_n$ is $E$-monotonic and Borel. Since every $x$ is contained in some $S_n$, it must be that $x\in [A]_E$, thus $A$ is a complete section.
\end{proof}

In our treatment of $\Delta^1_1$ hyperfinite equivalence relations, we are going to frequently and tacitly use the fact that quantifiers bounded by $E$-classes ($\forall x\in[y]_E$ and $\exists x\in[y]_E$) are in fact number quantifiers. In the classical setting this is true for any countable Borel equivalence relation, which is a consequence of the Feldman--Moore Theorem (\cite[Theorem 1]{FM}), or in the case of hyperfinite Borel equivalence relations $E$, by direct computations using the equivalent characterization that $E$ is generated by a single Borel automorphism (\cite{SlSt}; also see \cite[Theorem 5.1(4)]{DJK}). In the effective setting, this can be seen by applying e.g. \cite[Theorem 4.5]{Thornton}.

The Gandy-Harrington forcing will be the main tool in our proof of the Technical Theorem. Detailed introductions of Gandy-Harrington forcing can be found in \cite{HKL} and \cite{Kanovei1997}. Here we briefly review some basic notions and prove a few facts to be used in our proof.

For the rest of this section, let $X$ be a fixed recursively represented Polish space. For any natural number $n\geq 1$, the {\em Gandy--Harrington forcing notion} on $X^n$ is the poset
$$ \mathsf{P}_n=\{A\subseteq X^n\colon A\in\Sigma^1_1, \mbox{ $A$ is uncountable}\} $$
ordered by inclusion. The following is a basic fact about the Gandy-Harrington forcing.

\begin{lemma}\label{lem:singleton} Let $n\geq 1$ and let $M$ be a model of sufficiently many axioms of $\ZFC$ with $\mathsf{P}_n\in M$. If $\mathcal{G}$ is $\mathsf{P}_n$-generic over $M$, then $\bigcap \mathcal{G}$ is a singleton $\{x_{\mathcal{G}}\}$ with $x_{\mathcal{G}}\in X^n\cap M[\mathcal{G}]$.
\end{lemma}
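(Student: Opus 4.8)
The statement to prove is the standard "$\bigcap\mathcal{G}$ is a singleton" lemma for Gandy--Harrington forcing, so I would follow the well-known two-step argument: first, the intersection of a generic filter is nonempty (it contains at least one point of $X^n$), and second, it contains at most one point.

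The plan is to exploit two genericity facts. First I would argue nonemptiness. Fix a countable dense (in the relevant sense) set of conditions given by the model, and recall that $\mathsf{P}_n$ consists of $\Sigma^1_1$ sets; a $\Sigma^1_1$ set is nonempty iff it is uncountable-or-nonempty, and here every condition is uncountable hence a fortiori nonempty. The key point is a Baire-category-style reflection: for each basic open set $U$ of $X^n$ (from a fixed recursive basis), the set of conditions $A$ with either $A\subseteq U$ or $A\cap U=\varnothing$ is dense in $\mathsf{P}_n$ — indeed, if $A$ is uncountable $\Sigma^1_1$ then at least one of $A\cap U$, $A\setminus \bar U$ (or more carefully $A\cap U$ versus $A$ minus the closure) is still uncountable $\Sigma^1_1$, because a $\Sigma^1_1$ set that is the union of two sets each of which is countable is countable (uncountable $\Sigma^1_1$ sets have perfect subsets, or one just uses that $\Sigma^1_1$ sets are "thin or perfect"). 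Since $M$ sees these dense sets and $\mathcal{G}$ is generic over $M$, for every basic open $U\in M$ the filter $\mathcal{G}$ decides $U$. Letting $F=\bigcap_{A\in\mathcal{G}}\bar A$ (closures), the decided-basic-open sets define a point: $F$ is a decreasing intersection of closed sets with the finite intersection property, and one shrinks to diameter zero using density of conditions of small diameter (again: any uncountable $\Sigma^1_1$ set can be split into pieces of small diameter, one of which remains uncountable). This gives a single point $x_{\mathcal{G}}$ with $x_{\mathcal{G}}\in\bar A$ for all $A\in\mathcal{G}$. Upgrading membership in each $\bar A$ to membership in each $A$ uses the "decide $U$" density again, now phrased with $\Sigma^1_1$ sets directly rather than basic opens: for each $A\in\mathsf{P}_n$ the set $\{B\in\mathsf{P}_n: B\subseteq A$ or $B\cap A=\varnothing\}$ is dense, so for $A\in\mathcal{G}$ genericity forces the point into $A$ itself. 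Thus $x_{\mathcal{G}}\in\bigcap\mathcal{G}$, so $\bigcap\mathcal{G}\neq\varnothing$.

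For uniqueness, suppose $x\neq y$ both lie in $\bigcap\mathcal{G}$. Pick disjoint basic open sets $U\ni x$, $V\ni y$ in $M$ (the basis is recursive, hence in $M$). By the decide-$U$ density applied inside $M$, $\mathcal{G}$ contains some $A$ with $A\subseteq U$ or $A\cap U=\varnothing$; since $x\in A$ forces $A\subseteq U$, we get a condition contained in $U$, and similarly one contained in $V$. But $\mathcal{G}$ is a filter, so these two conditions have a common extension in $\mathcal{G}$, which must be a nonempty (uncountable) subset of $U\cap V=\varnothing$ — contradiction. Hence $\bigcap\mathcal{G}=\{x_{\mathcal{G}}\}$. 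Finally, $x_{\mathcal{G}}\in M[\mathcal{G}]$ because it is definable in $M[\mathcal{G}]$ from $\mathcal{G}$ and the recursive basis (it is the unique point lying in $\bar A$ for all $A\in\mathcal{G}$, equivalently the limit along $\mathcal{G}$ of the basic-open decisions), and all of this data is available in $M[\mathcal{G}]$.

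**Main obstacle.** The only genuinely non-routine ingredient is the "splitting" lemma for conditions: that an uncountable $\Sigma^1_1$ set can always be refined, inside a given basic open neighborhood or to small diameter, to a still-uncountable $\Sigma^1_1$ set — i.e. that the relevant families of conditions are dense. This rests on the structural fact that a $\Sigma^1_1$ set is either countable or contains a perfect set (so it cannot be a countable union of countable pieces and still be uncountable), together with closure of $\Sigma^1_1$ under intersection with (basic) open sets. I would cite this from the standard references (\cite{HKL}, \cite{Kanovei1997}) rather than reprove it. Everything else is the generic-filter bookkeeping sketched above, and I expect it to go through verbatim as in those sources; the one point to be slightly careful about is that "sufficiently many axioms of $\ZFC$" must include enough to know the perfect set property for $\Sigma^1_1$ sets and to carry out the diameter-shrinking recursion, which is why the hypothesis is stated that way.
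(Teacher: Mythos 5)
The paper offers no proof of this lemma at all: it is quoted as a standard fact about Gandy--Harrington forcing, with \cite{HKL} and \cite{Kanovei1997} as references. So the only question is whether your argument is a correct proof, and it has a genuine gap at the central point. Your diameter-shrinking argument in the Polish topology only produces a point $x_{\mathcal{G}}$ lying in $\bar{A}$ (the closure of $A$) for every $A\in\mathcal{G}$, and the step you propose for upgrading this to $x_{\mathcal{G}}\in A$ does not work: for $A\in\mathcal{G}$, the dense set $\{B\colon B\subseteq A\mbox{ or }B\cap A=\varnothing\}$ is met by some $B\in\mathcal{G}$ which, by compatibility in the filter, must satisfy $B\subseteq A$ --- but that is vacuous (you could take $B=A$) and gives no information beyond $x_{\mathcal{G}}\in\bar{B}\subseteq\bar{A}$. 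Since conditions are $\Sigma^1_1$ and in general far from closed, membership in all closures is strictly weaker than $\bigcap\mathcal{G}\neq\varnothing$, and this nonemptiness is precisely the nontrivial content of the lemma.

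The missing ingredient is the tree representation of $\Sigma^1_1$ sets (equivalently, the strong Choquet property of the Gandy--Harrington topology): writing each condition $A$ as the projection $p[T_A]$ of a closed set, one uses genericity to decide, for every $A\in\mathcal{G}$, longer and longer finite approximations to a witness $f_A$ together with the shrinking neighborhoods of $x_{\mathcal{G}}$, so that in the limit $(x_{\mathcal{G}},f_A)\in[T_A]$ and hence $x_{\mathcal{G}}\in A$; the relevant dense sets are those deciding initial segments of witnesses, not those deciding membership in basic open sets. Two smaller points: your density argument for deciding a basic open $U$ (``$A\cap U$ or $A\setminus\bar{U}$ is uncountable'') can fail when $A$ concentrates on the boundary of $U$; the correct argument uses the effective perfect set theorem, subtracting the (countably many, all $\Delta^1_1$) points of a countable $\Sigma^1_1$ set rather than taking a set-theoretic difference of $\Sigma^1_1$ sets. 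Your uniqueness and $x_{\mathcal{G}}\in M[\mathcal{G}]$ steps are fine once existence is in hand.
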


Note that $\mathsf{P}_n$ is a different forcing notion from the product $\mathsf{P}_1^n$, but projection maps $\pi$ onto a specific coordinate are open maps for both $\mathsf{P}_n$ and $\mathsf{P}_1^n$.



Consider the full binary tree $2^{<\omega}=\bigcup_{n<\omega}2^n$. An element $t\in 2^n$ is a $0,1$-sequence of length $n$. We denote the length of $t$ by $|t|$. If $t=(t_1,\dots, t_n)\in 2^n$ and $m\leq n$, then $t\!\!\upharpoonright\!\! m=(t_1,\dots, t_m)$ denotes the {\em initial segment} of $t$ of length $m$. If $t\in 2^n$, $s\in 2^m$ and $m\leq n$, then we say $t$ {\em extends} $s$, and write $s\subseteq t$ or $t\supseteq s$, if $t\!\upharpoonright\! m=s$. If $t=(t_1,\dots, t_n)\in 2^n$ and $s=(s_1,\dots, s_m)\in 2^m$, then the {\em concatenation} of $t$ and $s$ is $t^\smallfrown s=(t_1,\dots, t_n, s_1,\dots, s_n)\in 2^{n+m}$. When $|s|=1$, instead of writing $t^\smallfrown(i)$ for $i=0,1$, we write $t^\smallfrown i$. Concatenation is an associative operation.

Now for each $n<\omega$, let $\mathsf{P}_{2^n}$ be the Gandy--Harrington forcing on $X^{2^n}$. In this point of view, each $t\in 2^n$ is a coordinate of a point in $X^{2^n}$. We let $\mathsf{P}=\bigcup_{n<\omega}\mathsf{P}_{2^n}$ be the disjoint union of $\mathsf{P}_{2^n}$ (in the same sense as $2^{<\omega}$ being a disjoint union of $2^n$). $\mathsf{P}_{2^n}$ is called the {\em level $n$} of $\mathsf{P}$. For each $p\in \mathsf{P}$, let $\dim(p)$ be the unique $n$ such that $p\in\mathsf{P}_{2^n}$.



We define a collection of projections from $\mathsf{P}_{2^n}$ to $\mathsf{P}_{2^m}$ for $n>m$ as follows.

\begin{definition}
	For natural numbers $m\leq n$ and $t\in 2^{n-m}$, the projection map $\pi_{m,t}\colon \mathsf{P}_{2^n}\to \mathsf{P}_{2^m}$, which we call the projection from level $n$ to level $m$ {\em along $t$}, is defined by
$$\pi_{m,t}(p)=\left\{(x_s)_{s\in 2^m}\colon \exists (y_r)_{r\in 2^n}\in p\ \forall s\in 2^m\ y_{s^\smallfrown t}=x_s\right\} $$	
for $p\in\mathsf{P}_{2^n}$.
\end{definition}

The following basic facts are easy to verify. We state them without proof.

\begin{lemma}\label{lem:opendense} Suppose $k\leq m\leq n$, $s\in 2^{m-k}$ and $t\in 2^{n-m}$. Then the following hold: \begin{enumerate}
\item[(i)] $\pi_{k,s^\smallfrown t}=\pi_{k,s}\circ \pi_{m,t}$.
\item[(ii)] If $D\subseteq \mathsf{P}_{2^m}$ is open dense, then so is
$$ \pi_{m,t}^{-1}(D)=\{p\in \mathsf{P}_{2^n}\colon \pi_{m,t}(p)\in D\}. $$
\end{enumerate}
\end{lemma}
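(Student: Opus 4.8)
The statement to prove is Lemma~\ref{lem:opendense}, which asserts two basic facts about the projection maps $\pi_{m,t}$ between levels of the Gandy--Harrington forcing $\mathsf{P}$: (i) a composition/associativity identity $\pi_{k,s^\smallfrown t}=\pi_{k,s}\circ\pi_{m,t}$, and (ii) that preimages of open dense sets under $\pi_{m,t}$ are open dense. Although the paper states these "without proof", the task here is to sketch how I would prove them.

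\textbf{Plan for (i).} This is a pure unwinding of definitions. Fix $p\in\mathsf{P}_{2^n}$. First I would observe that $\pi_{m,t}(p)$ is genuinely a member of $\mathsf{P}_{2^m}$: it is $\Sigma^1_1$ since it is the projection of a $\Sigma^1_1$ set along the coordinate-restriction map $(y_r)_{r\in 2^n}\mapsto (y_{s^\smallfrown t})_{s\in 2^m}$, and it is uncountable because this coordinate-restriction map is continuous and open (as noted in the remark following Lemma~\ref{lem:singleton}), so it cannot collapse an uncountable analytic set to a countable one — more carefully, an uncountable analytic set has a perfect subset on which one can arrange the relevant coordinates to be injective, or one simply notes the map is open hence the image of an uncountable open-in-$p$ family stays uncountable. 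Then for the identity itself: a point $(x_u)_{u\in 2^k}$ lies in $\pi_{k,s^\smallfrown t}(p)$ iff there is $(y_r)_{r\in 2^n}\in p$ with $y_{u^\smallfrown(s^\smallfrown t)}=x_u$ for all $u\in 2^k$; using associativity of concatenation, $u^\smallfrown(s^\smallfrown t)=(u^\smallfrown s)^\smallfrown t=u^\smallfrown(s^\smallfrown t)$, and I would show this is equivalent to: there is $(z_v)_{v\in 2^m}\in\pi_{m,t}(p)$ with $z_{u^\smallfrown s}=x_u$ for all $u\in 2^k$, which is exactly $(x_u)_{u\in 2^k}\in\pi_{k,s}(\pi_{m,t}(p))$. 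The forward direction takes $z_v=y_{v^\smallfrown t}$; the backward direction unpacks the witness for membership of $(z_v)$ in $\pi_{m,t}(p)$. This is routine and I would present it compactly.

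\textbf{Plan for (ii).} Here the content is that $\pi_{m,t}\colon\mathsf{P}_{2^n}\to\mathsf{P}_{2^m}$ is an \emph{open} map (in the forcing sense: it sends conditions to conditions, is monotone, and for every $q\le\pi_{m,t}(p)$ there is $p'\le p$ with $\pi_{m,t}(p')\le q$), and that open maps pull back open dense sets to open dense sets. For \emph{openness} of $\pi_{m,t}^{-1}(D)$ (downward closure): if $p\in\pi_{m,t}^{-1}(D)$ and $p'\subseteq p$ with $p'\in\mathsf{P}_{2^n}$, then $\pi_{m,t}(p')\subseteq\pi_{m,t}(p)\in D$, and since $D$ is open (downward closed), $\pi_{m,t}(p')\in D$, so $p'\in\pi_{m,t}^{-1}(D)$. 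For \emph{density}: given any $p\in\mathsf{P}_{2^n}$, the condition $\pi_{m,t}(p)\in\mathsf{P}_{2^m}$, so by density of $D$ there is $q\subseteq\pi_{m,t}(p)$ with $q\in D$. I then need $p'\subseteq p$ with $\pi_{m,t}(p')\subseteq q$; take $p'=\{(y_r)_{r\in 2^n}\in p\colon (y_{s^\smallfrown t})_{s\in 2^m}\in q\}$. This $p'$ is $\Sigma^1_1$ (intersection of two $\Sigma^1_1$ conditions via the continuous restriction map), it is nonempty and in fact uncountable — this is the one point requiring a short argument, handled via Lemma~\ref{lem:singleton}-style reasoning or directly: pick any $z\in q$; the fiber of $p$ above $z$ under the restriction map is a nonempty $\Sigma^1_1$ set, and doing this over the uncountable set $q$ (e.g. over a perfect subset of $q$ on the relevant coordinates) gives uncountably many points of $p'$. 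Then $\pi_{m,t}(p')\subseteq q\cap\pi_{m,t}(p)\subseteq q$, and since $q\in D$ and $D$ is open, $\pi_{m,t}(p')\in D$, so $p'\in\pi_{m,t}^{-1}(D)$ and $p'\subseteq p$, proving density.

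\textbf{Expected main obstacle.} Neither part is deep; the only genuinely non-formal step is verifying that the various $\Sigma^1_1$ sets produced by projection ($\pi_{m,t}(p)$) and by fibered intersection ($p'$ above) remain \emph{uncountable}, i.e. are legitimate forcing conditions. The cleanest way to handle this uniformly is to invoke that $\pi_{m,t}$ is an open map between the topological spaces $X^{2^n}$ and $X^{2^m}$ (the coordinate-restriction map is continuous, surjective, and open, being essentially a projection), so it does not raise countability: an uncountable $\Sigma^1_1$ set, having a perfect subset, maps to a set containing a continuous image of a perfect set, and one arranges this image to be uncountable by first thinning to make the dropped coordinates irrelevant — or, entirely formally, one cites the already-made observation that projections are open for $\mathsf{P}_n$ together with the perfect set property for $\Sigma^1_1$ sets. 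I would state this once as a small claim and reuse it for both parts. Everything else is symbol-pushing with the definition of $\pi_{m,t}$ and the associativity of concatenation, which is why the authors felt comfortable omitting it.
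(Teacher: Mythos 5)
The paper itself gives no argument for this lemma (it is stated ``without proof''), so there is nothing to compare line by line; your skeleton --- unwinding the definition of $\pi_{m,t}$ and using associativity of concatenation for (i), and for (ii) checking downward closure plus density via the shrunken condition $p'=\{(y_r)_{r\in 2^n}\in p\colon (y_{s^\smallfrown t})_{s\in 2^m}\in q\}$ --- is exactly the routine verification the authors intend, and your density argument is correct as written (indeed $\pi_{m,t}(p')=q$, so uncountability is free there).

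There is, however, a genuine flaw in the one step you yourself single out as non-formal: the claim that $\pi_{m,t}$ cannot collapse an uncountable analytic set to a countable one because the coordinate-restriction map is open. That is false. Take $X=2^\omega$, $n=1$, $m=0$, $t=(0)$, and $p=\{c\}\times 2^\omega$ with $c$ recursive: $p$ is an uncountable $\Sigma^1_1$ set, but $\pi_{0,t}(p)=\{c\}$ is a singleton; an uncountable set need not have a perfect subset on which the \emph{retained} coordinates are injective (they may be constant there). With the paper's literal definition of $\mathsf{P}_{2^n}$ as \emph{uncountable} $\Sigma^1_1$ sets, this affects both the well-definedness assertion in your treatment of (i) ($\pi_{m,t}(p)$ need not be a condition) and the downward-closure half of (ii) (the same example with $D=\mathsf{P}_1$ shows $\pi_{0,t}^{-1}(D)$ is not closed downward under shrinking $X\times 2^\omega$ to $\{c\}\times 2^\omega$). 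The correct repair is not topological openness but one of the following: either adopt the standard Gandy--Harrington convention that conditions are \emph{nonempty} $\Sigma^1_1$ sets (projections of nonempty sets are nonempty, and everything goes through), or note that in this paper the lemma is only applied to conditions contained in $Y^{2^n}$, where $Y$ has no $\Delta^1_1$ members; by the effective Perfect Set Theorem a nonempty countable $\Sigma^1_1$ set consists of $\Delta^1_1$ points, whose coordinates would be $\Delta^1_1$ elements of $Y$, so every nonempty $\Sigma^1_1$ subset of $Y^{2^m}$ is automatically uncountable and the projected sets are genuine conditions. With that substitution your proof is complete; without it, the justification you give for uncountability does not work.
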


Next we define a partial order $\leq_{\mathsf{P}}$ on $\mathsf{P}$ to turn it into a poset.

\begin{definition} Define a partial order $\leq_{\mathsf{P}}$ on $\mathsf{P}$ by letting $p\leq_{\mathsf{P}} q$ iff either $p\subseteq q$ or there are $m<n$ such that $p\in \mathsf{P}_{2^n}$, $q\in \mathsf{P}_{2^m}$ and for every $t\in 2^{n-m}$, $\pi_{m,t}(p)\subseteq q$.
\end{definition}

In this paper, we do not need the full genericity for $\mathsf{P}$. Instead, we use $\mathsf{P}$ to construct objects that are, in the sense of the above projections, simultaneously generic for all $\mathsf{P}_{2^n}$. The sense of sufficient genericity for $\mathsf{P}$ is formulated in the following proposition.

\begin{proposition}\label{prop:genericity} Let $M$ be a countable model of sufficiently many axioms of $\ZFC$ with $\mathsf{P}\in M$. Then there is a sequence of subsets $\{D_n\}_{n<\omega}$ of $\mathsf{P}$ in $M$ such that:
	\begin{itemize}
		\item[(i)] Each $D_n\subseteq \mathsf{P}_{2^n}$ is open dense in $\mathsf{P}_{2^n}$;
		\item[(ii)] If a filter $\mathcal{G}\subseteq \mathsf{P}$ intersects each $D_n$, then for every $b\in 2^{\omega}$ and $n\in\omega$, $$\left\{\pi_{n,b\upharpoonright k}(p)\colon k\in\omega \mbox{ and } \ (p\in \mathsf{P}_{2^{n+k}}\cap\mathcal{G})\right\}$$ is $\mathsf{P}_{2^n}$-generic over $M$.
	\end{itemize}
\end{proposition}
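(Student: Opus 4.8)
The plan is to build $\langle D_N\rangle$ out of auxiliary dense sets $D^{n,E}_N$, one for each open dense $E\subseteq \mathsf{P}_{2^n}$ lying in $M$ and each level $N\ge n$, designed so that meeting $D^{n,E}_N$ handles every branch $b\in 2^\omega$ through a given finite string at once. The device is: for $n\le N$ and $E\subseteq \mathsf{P}_{2^n}$ open dense, set
$$D^{n,E}_N:=\bigcap_{t\in 2^{N-n}}\pi_{n,t}^{-1}(E)=\bigl\{p\in \mathsf{P}_{2^N}\colon \pi_{n,t}(p)\in E\text{ for every }t\in 2^{N-n}\bigr\}.$$
By Lemma~\ref{lem:opendense}(ii) each $\pi_{n,t}^{-1}(E)$ is open dense in $\mathsf{P}_{2^N}$, and since $2^{N-n}$ is finite, $D^{n,E}_N$ is open dense in $\mathsf{P}_{2^N}$; as it is defined from $\mathsf{P},n,E,N$, we have $D^{n,E}_N\in M$ whenever $n,E\in M$. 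The point of the conjunction over all $t$ is that a single $p\in D^{n,E}_N$ projects into $E$ along every coordinate $t$ simultaneously, in particular along $b\!\upharpoonright\!(N-n)$ for every $b\in 2^\omega$.

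Next I would set up the bookkeeping. Since $M$ is countable, the collection of pairs $(n,E)$ with $E$ an open dense subset of $\mathsf{P}_{2^n}$ belonging to $M$ is countable; fix an enumeration $\langle(n_j,E_j)\rangle_{j<\omega}$ of it and a map $j\colon\omega\to\omega$ taking each value infinitely often. Put $D_N:=D^{n_{j(N)},E_{j(N)}}_N$ if $n_{j(N)}\le N$, and $D_N:=\mathsf{P}_{2^N}$ otherwise. Then each $D_N$ is open dense in $\mathsf{P}_{2^N}$ and $D_N\in M$, which is (i). (The assembly of the sequence is carried out in $V$; only the individual terms need to lie in $M$, and that is all (ii) uses, since a filter meeting $\langle D_N\rangle$ exists by Rasiowa--Sikorski without any reference to $M$.)

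For (ii), let $\mathcal{G}\subseteq \mathsf{P}$ be a filter meeting every $D_N$, fix $b\in 2^\omega$ and $n<\omega$, and write $\mathcal{G}^n_b=\{\pi_{n,b\upharpoonright k}(p)\colon k<\omega,\ p\in \mathcal{G}\cap \mathsf{P}_{2^{n+k}}\}$. I would first verify that $\mathcal{G}^n_b$ is directed, so that the filter $\mathcal{H}^n_b$ it generates in $\mathsf{P}_{2^n}$ is well defined and $\bigcap\mathcal{G}^n_b=\bigcap\mathcal{H}^n_b$ (so "$\mathcal{G}^n_b$ is $\mathsf{P}_{2^n}$-generic over $M$" means that $\mathcal{H}^n_b$ is a generic filter): given $\pi_{n,b\upharpoonright k}(p)$ and $\pi_{n,b\upharpoonright k'}(p')$ with $p,p'\in\mathcal{G}$, pick $q\in\mathcal{G}$ with $q\leq_\mathsf{P} p$ and $q\leq_\mathsf{P} p'$; then $\dim q\ge n+k$ and $\dim q\ge n+k'$, and writing $b\!\upharpoonright\!(\dim q-n)$ as $(b\!\upharpoonright\! k)^\smallfrown u$, the definition of $\leq_\mathsf{P}$ gives $\pi_{n+k,u}(q)\subseteq p$, whence by Lemma~\ref{lem:opendense}(i) and monotonicity of projections $\pi_{n,b\upharpoonright(\dim q-n)}(q)\subseteq \pi_{n,b\upharpoonright k}(p)$, and symmetrically $\subseteq \pi_{n,b\upharpoonright k'}(p')$, with $\pi_{n,b\upharpoonright(\dim q-n)}(q)\in\mathcal{G}^n_b$. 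Then density: let $E\subseteq \mathsf{P}_{2^n}$ be open dense with $E\in M$ (a general dense set in $M$ reduces to this by passing to its downward closure, again open dense and in $M$). Then $E=E_j$ for some $j$ with $n_j=n$; choose $N\ge n$ with $j(N)=j$, so $D_N=D^{n,E}_N$, and take any $p\in\mathcal{G}\cap D_N$. Since $p\in \mathsf{P}_{2^N}$ and $\pi_{n,t}(p)\in E$ for every $t\in 2^{N-n}$, in particular $\pi_{n,b\upharpoonright(N-n)}(p)\in E$; as this element lies in $\mathcal{G}^n_b\subseteq\mathcal{H}^n_b$, the filter $\mathcal{H}^n_b$ meets $E$. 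Hence $\mathcal{H}^n_b$ is $\mathsf{P}_{2^n}$-generic over $M$, giving (ii).

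The step I expect to require the most care is the directedness check for $\mathcal{G}^n_b$, where the cross-level order $\leq_\mathsf{P}$, the composition identity of Lemma~\ref{lem:opendense}(i), and the concatenation conventions must be lined up correctly — in particular that any $q\in\mathcal{G}$ below both $p$ and $p'$ automatically has $\dim q$ at least the larger of the two dimensions, and that projections are monotone under inclusion. Apart from that, the only genuinely new idea is the "conjoin over all $t$" device, which compresses the continuum-many requirements indexed by $b\in 2^\omega$ into the countable family $\langle D_N\rangle$; everything else is bookkeeping and two appeals to Lemma~\ref{lem:opendense}.
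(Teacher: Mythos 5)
Your proof is correct and follows essentially the same route as the paper: both arguments pull back each open dense subset of $\mathsf{P}_{2^n}$ in $M$ to a higher level $N$ along \emph{all} projections $\pi_{n,t}$, $t\in 2^{N-n}$ (using Lemma~\ref{lem:opendense} and finiteness of $2^{N-n}$), so that a single condition serves every branch $b$ at once, and then exploit the countability of $M$ to arrange these pullbacks into one sequence $\{D_N\}$ with each term in $M$. The only difference is bookkeeping — the paper intersects recursively into sets $W^n_k$ and takes the diagonal $D_n=W^n_n$, while you enumerate the (level, dense set) pairs with infinite repetition and devote each $D_N$ to one dense set; your explicit directedness check for the projected family is a detail the paper leaves implicit.
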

\begin{proof}
	For each $n\in\omega$, enumerate all open dense subsets of $\mathsf{P}_{2^n}$ in $M$ as $\{U^n_k\}_{k<\omega}$. Let $V^n_k=\bigcap_{i\leq k} U^n_i$. Since this is a finite intersection, each $V^n_k\in M$ and is still open dense. Moreover, a filter is $\mathsf{P}_n$-generic over $M$ if and only if it has nonempty intersection with every $V^n_k$ (or, just a tail of $\{V^n_k\}_k$, since it is a decreasing family).
	
	Next, we inductively shrink each $V^n_k$ to an open dense $W^n_k$. Let $W^0_k=V^0_k$ for all $k<\omega$. Suppose we have already defined $W^n_k$ for a fixed $n$ and all $k<\omega$. Define $W^{n+1}_k$ for all $k<\omega$ by induction on $k$:
$$\begin{array}{rcl}
W^{n+1}_0&=&V^{n+1}_0\cap \pi_{n,0}^{-1}(W^n_0)\cap \pi_{n,1}^{-1}(W^n_0), \\
W^{n+1}_k&=&V^{n+1}_k\cap \pi_{n,0}^{-1}(W^n_k)\cap \pi_{n,1}^{-1}(W^n_k)\cap W^{n+1}_{k-1}, \mbox{ for $k>0$}.
\end{array}
$$
By Lemma~\ref{lem:opendense} (ii), all $W^n_k$ are still open dense. By Lemma~\ref{lem:opendense} (i), the two-parameter family $\{W^n_k\}_{n,k}$ satisfies that for every $n>m$ and every $t\in 2^{n-m}$, $\pi_{m,t}^{-1}(W^n_k)\subseteq W^m_k$.
	
Finally, take $D_n=W^n_n$. Then $\{D_n\}_{n<\omega}$ is as required.
\end{proof}


The rest of this section is devoted to a proof of Theorem~\ref{effective}.

Assume that (I) fails. Then
$$ Y=\left\{x\in X\colon \forall S\in \Delta^1_1\ (\mbox{$S$ is $E$-monotonic} \rightarrow x\not\in S)\right\} $$
is nonempty. $Y$ is $\Sigma^1_1$. Since singletons are $E$-monotonic, $Y$ does not contain any $\Delta^1_1$ member. By the effective Perfect Set Theorem (see, e.g., \cite[4F.1]{Moschovakis}), $Y$ is uncountable. Thus $Y\in \mathsf{P}_1$. We note that $Y$ does not contain any nonempty $\Sigma^1_1$ $E$-monotonic subset of $X$. In fact, by counting quantifiers, we can see that being $E$-monotonic is $\Pi^1_1$ on $\Sigma^1_1$ sets. By the First Reflection Theorem (see, e.g., \cite[Lemma 1.2]{HMS}), every nonempty $\Sigma^1_1$ $E$-monotonic subset of $X$ is included in a nonempty $\Delta^1_1$ $E$-monotonic subset, and is therefore disjoint from $Y$ by definition.

%
%
%
%

Recall that $<_0$ and $<_1$ are two $\Delta^1_1$ class-wise $\mathbb{Z}$-orders on $2^\omega$ generating $E_0$. Now we extend them to each level of the tree $2^{<\omega}$. For $n<\omega$, $t,s\in 2^n$ and $i=0,1$, define
$$ t<_i s\iff \exists x\in 2^{\omega}\ (t^\smallfrown x<_i s^\smallfrown x). $$
Note that $t<_i s$ if and only if for every $x\in 2^\omega$, $t^\smallfrown x<_i s^\smallfrown x$. Thus $<_i$ is $\Delta^1_1$. The following facts are easy to verify. We state them without proof.

\begin{lemma} For all $n<\omega$ and $t, s\in 2^n$, the following hold:
	\begin{enumerate}
		\item[(i)] For $i=0,1$ and $j=0,1$, $t<_i s$ if and only if $t^\smallfrown j<_i s^\smallfrown j$;
		\item[(ii)] $t^\smallfrown 0<_0 s^\smallfrown 1$;
		\item[(iii)] $t^\smallfrown 0<_1 s^\smallfrown 1$ if $n$ is even, and $t^\smallfrown 1<_1 s^\smallfrown 0$ if $n$ is odd.
	\end{enumerate}
\end{lemma}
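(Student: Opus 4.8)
The plan is to derive all three clauses directly from the definitions of $<_0$ and $<_1$, via the observation recorded just above the statement: for $t,s\in 2^n$, whether $t^\smallfrown x<_i s^\smallfrown x$ holds is independent of $x\in 2^\omega$. The point to keep in mind is that when $t\neq s$ the strings $t^\smallfrown x$ and $s^\smallfrown x$ agree on every coordinate $\geq n$, so the largest coordinate $k$ at which they differ is $k=\max\{j<n\colon t(j)\neq s(j)\}$, which depends only on $t$ and $s$; and both $<_0$ and $<_1$ are, by definition, decided by the two values of the strings at this coordinate $k$ together with the parity of $k$. (When $t=s$, both $t<_i s$ and $t^\smallfrown j<_i s^\smallfrown j$ are simply false, so each clause holds vacuously.) Hence throughout I may replace the relation $t<_i s$ by the concrete statement that $t\neq s$ and the relevant comparison holds at the coordinate $k$ above.

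For clause (i), I would note that appending a common bit $j$ to $t$ and to $s$ disturbs nothing relevant: $t^\smallfrown j$ and $s^\smallfrown j$ carry the same value $j$ at the new coordinate $n$, so they differ exactly where $t$ and $s$ do, the largest coordinate of disagreement is the same $k<n$, and the values there are unchanged; thus the defining condition for $<_i$ is word for word the same for $(t^\smallfrown j,s^\smallfrown j)$ as for $(t,s)$. (Alternatively, and even more mechanically: $t^\smallfrown j<_i s^\smallfrown j$ unfolds to the assertion that $t^\smallfrown j^\smallfrown y<_i s^\smallfrown j^\smallfrown y$ for every $y\in 2^\omega$, and since every string of the form $j^\smallfrown y$ is an admissible tail, this is equivalent, by the tail-independence above, to $t^\smallfrown z<_i s^\smallfrown z$ holding for some $z\in 2^\omega$, i.e.\ to $t<_i s$.)

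For clauses (ii) and (iii), I would observe that for the pairs $(t^\smallfrown 0,s^\smallfrown 1)$ and $(t^\smallfrown 1,s^\smallfrown 0)$ in $2^{n+1}$ the two strings always disagree at the topmost coordinate $n$, so there $k=n$ no matter how $t$ and $s$ compare below level $n$; this is why these clauses are unconditional. Clause (ii) is then immediate from the definition of $<_0$, since the $<_0$-smaller string is the one carrying the smaller bit at coordinate $n$, and $0<1$. Clause (iii) is the same computation for $<_1$: one unwinds the definition of $<_1$ at coordinate $n$, keeping track of its parity, and finds that for $n$ even it is $t^\smallfrown 0$ that lies $<_1$-below $s^\smallfrown 1$, while for $n$ odd it is $t^\smallfrown 1$ that lies $<_1$-below $s^\smallfrown 0$.

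Since each step is a direct unwinding of the definitions, I do not expect a genuine obstacle. The only points needing a little care are making the tail-independence precise, so that it is legitimate to speak of the values of $t$ and $s$ at their largest coordinate of disagreement without reference to a tail, and the parity bookkeeping in clause (iii) --- in particular, remembering on which coordinates $<_1$ runs parallel to $<_0$ and on which it reverses --- together with the small but worth-stating fact that in (ii) and (iii) the freshly appended coordinate is automatically the largest coordinate of disagreement.
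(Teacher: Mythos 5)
The paper gives no proof of this lemma (it is stated as ``easy to verify''), so the only benchmark is the definitions themselves, and your direct unwinding is exactly the intended argument. The tail-independence observation, the vacuous case $t=s$, clause (i) (appending a common bit changes neither the largest coordinate of disagreement nor the values there), and clause (ii) (the fresh bit is automatically the top disagreement and $0<1$) are all handled correctly.

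The one point where your sketch is not yet internally consistent is the parity bookkeeping in (iii) --- precisely the point you flag. You place the fresh bit of $t^\smallfrown j$, for $t\in 2^n$, at ``coordinate $n$''; if coordinates of $2^\omega$ are counted from $0$, that coordinate has the same parity as $n$, and the displayed definition of $<_1$ (which reverses at even coordinates) then gives $t^\smallfrown 1<_1 s^\smallfrown 0$ when $n$ is even --- the opposite of clause (iii) and of what you assert. Clause (iii) as stated (and the Example in Section~\ref{sec:maindich}, e.g.\ $t^\smallfrown 00^\smallfrown x<_1 t^\smallfrown 10^\smallfrown x$ for $t$ of even length) presupposes the paper's convention $t=(t_1,\dots,t_n)$, under which the freshly appended bit sits at position $n+1$, of parity opposite to $n$; with that convention your claimed outcomes are exactly right. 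So the proof goes through once you fix the indexing convention explicitly and run the parity computation under it; as literally written, your identification of the fresh coordinate and your stated conclusion for (iii) do not cohere, and this is the only spot where a sign error could enter.
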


Let $M$ be a countable model of sufficiently many axioms of $\ZFC$ with $\mathsf{P}\in M$. Let $\{D_n\}_{n<\omega}$ be as in Proposition~\ref{prop:genericity}. For each $\Sigma^1_1$ set $q\in \mathsf{P}_{2^n}$, write $D_n(q)=\{p\cap q\colon p\in D_n\}$. Clearly these are nonempty sets that are downward closed by the open denseness of $D_n$.

Let $\tau$ be the topology generated by $\mathsf{P}_1$ on $X$. Let $\bar{E}$ be the closure of $E$ in the $\tau\times\tau$ topology (which corresponds to the product forcing $\mathsf{P}_1\times \mathsf{P}_1$). By \cite[Lemma 5.2]{HKL}, $\bar{E}$ is $G_{\delta}$ in $\tau\times \tau$. Also, any partial transversal for $E$ is automatically $E$-monotonic, so $Y$ is included in the set
$$ \{x\in X\colon [x]_{E}\neq [x]_{\bar{E}}\}. $$
By \cite[Lemma 5.3]{HKL}, $E$ is both dense and meager in the relative $\tau\times\tau$ topology on $\bar{E}\cap (Y\times Y)$. Let $\{F_n\}_{n<\omega}$ enumerate the open dense subsets of $\mathsf{P}_1\times\mathsf{P}_1$ restricted to $\bar{E}\cap (Y\times Y)$ in $M$.

Let $<$ and $<'$ be $\Delta^1_1$ class-wise $\mathbb{Z}$-orders on $X$.
Next we define $p_n\in \mathsf{P}_{2^n}$ with the following properties:
\begin{enumerate}
	\item $p_{n}\in D_n(Y^{2^n})$;
	\item $\pi_{n,i}(p_{n+1})\subseteq p_n$ for $i=0,1$;
	\item For any $x\in p_n$ and $t,s\in 2^n$, $x(t)< x(s)\iff t<_0 s$ and $x(t)<' x(s)$ $\iff  t<_1 s$;
	\item For every pair $t,s$ such that $|t|=|s|$ and $t(|t|)\neq s(|s|)$, $\pi_{0,t}(p_n)\times\pi_{0,s}(p_n)\in F_n$.
\end{enumerate}
To simplify our argument, define
$$u_n=\left\{(x_t)_{t\in 2^n}\in Y^{2^n}\colon  \forall t,s\in 2^n\ (x_t<x_s\iff t<_0 s \mbox{ and } x_t<'x_s\iff t<_1 s)\right\}.$$
Then $u_n\in\mathsf{P}_{2^n}$, and properties (1) and (3) can together be written as $p_n\in D_n(u_n)$.

Granting the existence of such $p_n$, we show that (II) holds. In fact, define $\theta\colon 2^{\omega}\to X$ by
$$ \{\theta(b)\}=\bigcap_{n<\omega} \pi_{0,b\upharpoonright n}(p_n) $$
for $b\in 2^\omega$. To see that this makes sense, note that by properties (1) and (2) and Proposition~\ref{prop:genericity}, the sequence $\pi_{0,b\upharpoonright n}(p_n)$ is $\mathsf{P}_1$-generic over $M$, and therefore, by Lemma~\ref{lem:singleton}, the set on the right hand side above is a singleton. Thus $\theta$ is well defined.

To see that $\theta$ is continuous, let $\rho$ be a compatible metric on $X$. Then for any rational $\epsilon>0$, the set $A=\{p\in \mathsf{P}_1\colon \diam(p)<\epsilon\}$ is open dense in $\mathsf{P}_1$. Thus for any rational $\epsilon>0$, and for any $b\in 2^\omega$, there is $n<\omega$ such that $\pi_{0, b\upharpoonright n}(p_n)\in A$; now if $b'\!\upharpoonright\! n=b\!\upharpoonright\! n$, then $\rho(\theta(b'), \theta(b))<\epsilon$. Thus $\theta$ is continuous.

To see that $\theta$ is injective, note that, by property (3), for any $n<\omega$, $x\in p_n$, and distinct $t, s\in 2^n$, $x(t)\neq x(s)$. It follows that the set $$\left\{p\in \mathsf{P}_{2^n}\colon \forall t,s\in 2^n\ (t\neq s\rightarrow \pi_{0,t}(p)\cap \pi_{0,s}(p)=\varnothing)\right \}$$
is open dense below $p_n\in \mathsf{P}_{2^n}$. Thus if $b, b'\in 2^\omega$ and $b\neq b'$, then there is $n<\omega$ such that $b\!\upharpoonright\! n\neq b'\!\upharpoonright\! n$ and $\pi_{0,b\upharpoonright n}(p_n)\cap \pi_{0, b'\upharpoonright n}(p_n)=\varnothing$; this implies $\theta(b)\neq \theta(b')$. 

To see that $\theta$ is order-preserving from $(<_0, <_1)$ to $(<, <')$, consider an arbitrary pair $b, b'\in 2^\omega$ with $bE_0b'$ and $b<_0b'$. Let $k\in\omega$ be the largest so that $b(k)\neq b'(k)$. Let $n=k+1$ and $c\in 2^\omega$ be such that $b=(b\!\upharpoonright\!n)^\smallfrown c$. Then $b\!\!\upharpoonright\!\! n<_0 b'\!\!\upharpoonright\!\! n$. By property (3), we have that for any $x\in p_n$, $x(b\!\!\upharpoonright\!\!n)<x(b'\!\!\upharpoonright\!\!n)$. By property (2), we have that for all $m>n$ and $x\in p_m$, $x(b\!\upharpoonright\! m)<x(b'\!\upharpoonright\!m)$. By property (1) and Proposition~\ref{prop:genericity}, the sequence
$$q_m=\pi_{n,c\upharpoonright (m-n)}(p_m), m>n$$
is $\mathsf{P}_{2^n}$-generic over $M$. By Lemma~\ref{lem:singleton}, $\bigcap_{m>n} q_m$ is a singleton, whose only element we denote as $z$. By the definition of $\theta$, we have that $z(b\!\upharpoonright\! n)=\theta(b)$ and $z(b'\!\upharpoonright\! n)=\theta(b')$. By property (3), we have $\theta(b)<\theta(b')$. The proof for $\theta$ being order-preserving from $<_1$ to $<'$ is similar.

Finally, to see that $\theta$ is a reduction from $E_0$ to $E$, note that property (3) implies that for any $b, b'\in 2^\omega$, if $bE_0b'$ then $b$ and $b'$ are in particular $<_0$-comparable, which implies that $\theta(b)$ and $\theta(b')$ are $<$-comparable, thus $\theta(b)E\theta(b')$. By property (4), $(\theta(b),\theta(b'))$ is $\mathsf{P}_1\times \mathsf{P}_1$-generic if $(b,b')\notin E_0$ (c.f. \cite[Section 5]{Kanovei1997}), thus the pair cannot lie in any meager set of the $\mathsf{P}_1\times\mathsf{P}_1$ forcing, in particular $(\theta(b),\theta(b'))\notin E$. We have thus established (II).

Now, let us turn to the construction of $p_n$. For $n=0$ we simply put $p_0=Y$. Inductively, we assume that we have already defined $p_n$ to satisfy properties (1)--(4). By property (3) we have that for any $x\in p_n$ and $t, s\in 2^n$, $x(t)Ex(s)$. We proceed to defining $p_{n+1}$. We assume that $n+1$ is odd. The even case is similar. Our strategy is to construct $p_{n+1}\subseteq u_{n+1}$ to satisfy properties (3) and (4), and then  extend it further to satisfy property (1). Property (2) will be clear from our construction.

We first work with property (4). We enumerate all pairs mentioned in (4) as $\{(t_i,s_i)\}_{i<k}$. Let $p_{n,0,0}=p_{n,1,0}=p_n$. At each step $i<k$, Let $A=\pi_{0,t_i}(p_{n,0,i})$ and $B=\pi_{0,s_i}(p_{n,1,i})$. Note that $(A\times B)\cap \bar{E}$ is nonempty for $i=0$, and we promise that this will be the case for each step $i<k$. Since $A\times B$ is $\tau\times\tau$ open and $F_{n+1}$ is open dense in the relative $\tau\times\tau$ on $\bar{E}$, we are able to find $A'\subseteq A$ and $B'\subseteq B$ so that $A'\times B'\in F_{n+1}$.

Let $$\begin{array}{l}p_{n,0,{i+1}}=\{x\in p_{n,0,i}\colon x(t_i)\in A'\}, \\
p_{n,1,{i+1}}=\{x\in p_{n,1,i}\colon x(s_i)\in B'\}.
\end{array}
$$

Since $E$ is dense in $\bar{E}$, $(A'\times B')\cap E\neq\varnothing$. Let $p_{n,0}=p_{n,0,k}$ and $p_{n,1}=p_{n,1,k}$.
We write $p_{n,0}\otimes p_{n,1}$ for the set
$$\left\{z\in X^{2^{n+1}}\colon \exists x\in p_{n,0}\ \exists y\in p_{n,1}\ \forall t\in 2^n\ (z(t^\smallfrown 0)=x(t)\mbox{ and } z(t^\smallfrown 1)=y(t))\right\}. $$

Clearly, $p_{n,0}\otimes p_{n,1}$, and any of its extensions in $P_{2^{n+1}}$, satisfies property (2) and (4).

Then we turn to (3). We claim that there exists a pair $x\in p_{n,0}$ and $y\in p_{n,1}$ such that both $x(1\dots 1)<y(0\dots 0)$ and $x(1010\dots 10)<'y(0101\dots 01)$. Granting this claim, we set $z(t^\smallfrown 0)=x(t)$ and $z(t^\smallfrown 1)=y(t)$ for all $t\in 2^n$. By the transitivity of $<$ and $<'$ it must be that  $z\in u_{n+1}$ (recall that $11\dots 1$ and $00\dots 0$ are maximal and minimal elements for $<_0$ in their level, and $0101\dots 01$ and $1010\dots 10$ are maximal and minimal elements of $<'_0$ in their level). In particular $(p_{n,0}\otimes p_{n,1})\cap u_{n+1}$ is nonempty. Taking this intersection and extending it to an element of $D_{n+1}$ will give us the required $p_{n+1}$.


Toward a contradiction, assume the claim fails. This means that 
\begin{quote}(*) for any pair $x\in p_{n,0}$ and $y\in p_{n,1}$, we have that $$x(11\dots1)<y(00\dots 0)\iff x(0101\dots01)<'y(1010\dots10).$$
\end{quote}
Since (*) is $\Pi^1_1$ on $\Sigma^1_1$, using the First Reflection Theorem twice, we can extend $p_{n,0}$ and $p_{n,1}$ to $\Delta^1_1$ sets $p'_{n,0}\supseteq p_{n,0}$ and $p'_{n,1}\supseteq p_{n,1}$, respectively, while keeping (*) for $p'_{n,0}$ and $p'_{n,1}$. In addition, we can assure that for $i=0,1$, for any $x\in p_{n,i}$ and $s,t\in 2^n$, $x(s)Ex(t)$. Next we shrink $p'_{n,0}$ and $p'_{n,1}$.

For distinct $x,y\in X$ with $xEy$, define
$$ d(x,y)=|\{z\in X\colon x<z<y \mbox{ or } y<z<x\}|. $$
$d$ is $\Delta^1_1$ and is a metric on each $E$-class. Now, for each $x=(x_t)_{t\in 2^n}\in p_n$ let
$$ \diam(x)=\max\{d(x_t, x_s)\colon t, s\in 2^n\}. $$
This is well defined since $x_tEx_s$ for all $t,s\in 2^n$. Now let
$$ a=\min\{\diam(x)\colon x\in p'_{n,0}\} $$
and define
$$ q_{n,0}=\{x\in p'_{n,0}\colon \diam(x)=a\}. $$
Define a binary relation $R$ on $q_{n,0}$ by
$$ ((x_t)_{t\in 2^n},(y_t)_{t\in 2^n})\in R\iff \exists t,s,r\in 2^n\ (x_{t}\leq y_{s}\leq x_{r} \mbox{ or } y_{t}\leq x_{s}\leq y_{r}). $$
By the definition of $q_{n,0}$, $R$ is locally finite on $q_{n,0}$. $R$ is clearly $\Delta^1_1$, so by \cite[Lemma 1.17]{JKL2002} we are able to find a $\Delta^1_1$ maximal $R$-anticlique $q'_{n,0}\subseteq q_{n,0}$. 
We also shrink $p'_{n,1}$ in the same manner to obtain $q'_{n,1}$.


Let $q_n$ be the set of all $y\in q'_{n,1}$ such that there is $x\in q'_{n,0}$ with $x(11\dots 1)<y(00\dots0)$ but there is no $z\in q'_{n,1}$ such that $x(11\dots1)<z(00\dots0)$ and
$$x(0101\dots01)<'z(1010\dots10)<'y(1010\dots10). $$
All the quantifiers in the definition of $q_n$ are first-order. Hence $q_n$ is still $\Delta^1_1$. $q_n$ is nonempty since on each $E$-class $<'$ is order-isomorphic to $\mathbb{Z}$.

Now we show that $\pi_{0,1010\dots 10}(q_n)$ is a $E$-monotonic subset of $Y$, which contradicts the definition of $Y$. For this we show that for any $y,z\in q_n$,
$$y(1010\dots10)<z(1010\dots10)\iff y(1010\dots10)<'z(1010\dots10). $$
Assume this fails. Let $y,z\in q_n$ satisfy
$$y(1010\dots 10)<z(1010\dots10) \mbox{ and } z(1010\dots10)<'y(1010\dots10). $$
Since $q_n\subseteq q'_{n,1}$ and $q'_{n,1}$ is an $R$-anticlique, we must have $y(t)<z(s)$ for any $t,s\in 2^n$. Let $x\in q'_{n,0}$ be a witness for $y\in q_n$. Then $$x(11\dots1)<y(00\dots0)<z(00\dots0).$$
By (*) we have $x(0101\dots01)<'z(1010\dots10)$.
Thus $$x(0101\dots01)<'z(1010\dots10)<'y(1010\dots10).$$ This contradicts the definition of $y\in q_n$.

The proof of the technical theorem is thus complete.

\section{Hyperfinite-over-hyperfinite equivalence relations}\label{hfovhfer}

In this final section we consider a special class of hyperfinite-over-hyperfinite Borel equivalence relations and show that they are indeed hyperfinite. To define the class, we consider a hyperfinite-over-hyperfinite equivalence relation and from it define a hyperfinite Borel equivalence relation with two Borel class-wise $\mathbb{Z}$-orders. We then compare the two orders and see if they are compatible. The details are as follows.

Let $E$ be a hyperfinite-over-hyperfinite equivalence relation on a standard Borel space $X$. Suppose $E$ is generated by a Borel class-wise $\mathbb{Z}^2$-ordering $\prec$. Recall from Section \ref{prel} the definition of $\full(X)$. Also recall that the full part is an $E$-invariant Borel subset and $E$ is hyperfinite on the non-full part.

As in the proof of Proposition~\ref{hoh}, we can define a partial Borel injection $\gamma$ on $\full(X)$ so that for any $x\in\dom(\gamma)\subseteq\full(X)$, $[\gamma(x)]_F$ is the immediate predecessor of $[x]_F$ in the $\prec$-order of $F$-classes. Thus $\dom(\gamma)$ is a Borel complete section of $\full(X)$ for $F$. 

For distinct $x,y\in X$ with $xFy$, let $d(x,y)$ be the distance between $x$ and $y$ in the $\prec$-order. If $A\subseteq \full(X)$ is a Borel complete section, then there is a unique Borel function $\eta_A\colon \full(X)\to A$ such that for each $x\in \full(X)$, $d(\eta_A(x),x)=\min\{d(y,x)\colon y\in A\}$, and for any $y\in A$ with $d(y,x)=d(\eta_A(x),x)$, $\eta_A(x)\leq y$. Intuitively, $\eta_A(x)$ is the $d$-closest point to $x$ in $A$, and in the case when $x$ is equidistant to two points of $A$, $\eta_A(x)$ is the $\prec$-smaller one. When $A=\dom(\gamma)$ where $\gamma$ is the partial Borel injection above, $\eta_A$ is finite-to-one.

Now we define two Borel partial orders $<$ and $<'$ on $\full(X)$ so that they both generate $F$. For $x,y\in \full(X)$, let
$$ x<y\iff xFy \mbox{ and } x\prec y $$
and
$$\begin{array}{rcl} x<_{\gamma}'y &\iff& xFy \mbox{ and either } \\
& & (\eta_A(x)=\eta_A(y) \mbox{ and } x\prec y) \mbox{ or } \\
& & [\eta_A(x)\neq\eta_A(y) \mbox{ and } \gamma(\eta_A(x))\prec\gamma(\eta_A(y))],
\end{array} $$
where $A=\dom(\gamma)$. Both $<$ and $<_{\gamma}'$ are Borel class-wise $\mathbb{Z}$-orderings on $\full(X)$ for $F$.


\begin{definition}
	We say that the order $\prec$ is {\em self-compatible} if $<$ and $<_{\gamma}'$ are compatible.
\end{definition}

It may look like the definition depends on the choice of the partial Borel injection $\gamma$. The following proposition shows that this is not the case.

\begin{proposition}\label{prop:selfcompatible} Denote the Borel partial order $<'$ as $<'_{\gamma}$. For any $\gamma_0$ and $\gamma_1$, $<$ and $<'_{\gamma_0}$ are compatible if and only if $<$ and $<'_{\gamma_1}$ are compatible.
\end{proposition}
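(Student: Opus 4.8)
The plan is to show that the compatibility of $<$ with $<'_\gamma$ is insensitive to the choice of $\gamma$ by exhibiting, given two Borel partial injections $\gamma_0, \gamma_1$ with the stated property (each $[\gamma_j(x)]_F$ being the immediate $\prec$-predecessor of $[x]_F$ among $F$-classes, with $\dom(\gamma_j)$ a Borel complete section of $F$ on $\full(X)$), a single Borel $F$-monotonic complete section $Z$ that works simultaneously for the pair $(<,<'_{\gamma_0})$ and, after intersecting with another such set, for $(<,<'_{\gamma_1})$. The key observation is that $<'_{\gamma_j}$ admits a more intrinsic description: two $<$-comparable points $x,y$ with $xFy$ satisfy $x <'_{\gamma_j} y$ if and only if, comparing them by how their $F$-classes sit inside $[x]_E$, the $\prec$-predecessor $F$-class $[\gamma_j(\eta_{A_j}(x))]_F$ comes $\prec$-before (or equals, in which case we fall back to $x\prec y$) $[\gamma_j(\eta_{A_j}(y))]_F$, where $A_j=\dom(\gamma_j)$. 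Since $[\gamma_j(z)]_F$ is the immediate predecessor of $[z]_F$ regardless of $j$, the ordering on $F$-classes induced this way — roughly ``order $F$-classes by the $\prec$-position of their predecessor, breaking ties by $\prec$ within the class'' — is the same partition-into-$\mathbb{Z}$-blocks-of-$F$-classes no matter which $\gamma$ we use; only the choice of representatives $\eta_{A_j}(x)$, and hence which $F$-classes are grouped into a common $<'_{\gamma_j}$-block, differs.

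First I would make this precise: on $\full(X)$ define a Borel equivalence relation $G_j \subseteq F$ by $x\,G_j\,y \iff xFy$ and $\eta_{A_j}(x)=\eta_{A_j}(y)$. Then $<'_{\gamma_j}$ is precisely the order that, on each $G_j$-class, agrees with $<$ ($=\prec$), and between distinct $G_j$-classes inside the same $E$-class is determined by $\prec$ on the associated predecessor classes via $\gamma_j$. The point is that $G_0$ and $G_1$ are both locally finite as equivalence relations refining $F$ (each $G_j$-class is finite, being contained in the finite set of $F$-equivalent points mapped by $\eta_{A_j}$ to a fixed point of $A_j$), so by \cite[Lemma 1.17]{JKL2002} and standard marker arguments we can pass to a Borel complete section on which we control the behavior of both simultaneously.

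Next, using Proposition~\ref{prop:completesection}, I would argue as follows. Suppose $<$ and $<'_{\gamma_0}$ are compatible, and let $Y$ be a Borel $F$-monotonic complete section for the pair $(<, <'_{\gamma_0})$; by (1)$\Rightarrow$(4) of Proposition~\ref{prop:completesection} (applied to $F$) we may take $Y$ to be $F$-class-wise large and monotonic. Shrink $Y$ further to a Borel complete section $Y'$ on which, in addition, each $F$-class meets $Y'$ in a set that is ``sparse'' relative to both $G_0$ and $G_1$ — concretely, using countable uniformization on the Borel set $\{(x,y): xFy,\ x\neq y,\ \eta_{A_0}(x)=\eta_{A_0}(y)$ or $\eta_{A_1}(x)=\eta_{A_1}(y)\}$ and a maximal-anticlique argument, arrange that distinct elements of $Y'$ in a common $E$-class lie in distinct $G_0$-classes and in distinct $G_1$-classes. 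On such $Y'$, both $<'_{\gamma_0}$ and $<'_{\gamma_1}$, restricted to a single $E$-class, reduce to the \emph{same} order, namely the order induced by $\prec$ on predecessor classes (the within-$G_j$-class tie-break never occurs on $Y'$), and this order is order-isomorphic to the restriction of $\prec$ itself up to the identification of $F$-classes; hence on $Y'$ we have, class-wise, $<'_{\gamma_0}$ agrees with $<'_{\gamma_1}$. Since $Y'$ remains $F$-monotonic for $(<,<'_{\gamma_0})$, it follows that on each $E$-class $Y'$ either $<\ =\ <'_{\gamma_0}\ =\ <'_{\gamma_1}$ or $<\ =\ >'_{\gamma_0}\ =\ >'_{\gamma_1}$; in particular $Y'$ witnesses compatibility of $<$ and $<'_{\gamma_1}$. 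By symmetry the converse holds, which completes the proof.

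The main obstacle I expect is the bookkeeping in the second paragraph: verifying carefully that on a suitably sparse Borel complete section the two orders $<'_{\gamma_0}$ and $<'_{\gamma_1}$ genuinely coincide class-wise — i.e. that the ``predecessor-class order'' is well-defined independently of $\gamma$ and that after thinning out $G_0$- and $G_1$-collisions no tie-breaking clause is ever invoked — together with checking that the thinning can be done by a Borel construction preserving the complete-section property (this is where local finiteness of $G_0$ and $G_1$ and the maximal-anticlique lemma enter). Once that is in hand, the deduction via Proposition~\ref{prop:completesection} is routine.
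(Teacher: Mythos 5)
There is a genuine gap, and it originates in a misreading of the definition of $<'_{\gamma}$. You describe $x<'_{\gamma_j}y$ as a comparison of the $F$-classes $[\gamma_j(\eta_{A_j}(x))]_F$ and $[\gamma_j(\eta_{A_j}(y))]_F$, with a tie-break by $\prec$; but for $xFy$ these two $F$-classes are literally the same class (the immediate $\prec$-predecessor of $[x]_F=[y]_F$), so under your reading the order would always collapse to $\prec$ and self-compatibility would be vacuous. What the definition actually does is compare the \emph{points} $\gamma_j(\eta_{A_j}(x))$ and $\gamma_j(\eta_{A_j}(y))$ by $\prec$ inside that common predecessor class; thus $<'_{\gamma_j}$ rearranges the finite blocks ($G_j$-classes, in your notation) of each $F$-class according to where the particular injection $\gamma_j$ places their representatives, and this rearrangement depends essentially on $\gamma_j$. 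Consequently your central claim fails: on a complete section $Y'$ thinned so that distinct points lie in distinct $G_0$- and distinct $G_1$-classes, you have $x<'_{\gamma_0}y\iff\gamma_0(\eta_{A_0}(x))\prec\gamma_0(\eta_{A_0}(y))$ but $x<'_{\gamma_1}y\iff\gamma_1(\eta_{A_1}(x))\prec\gamma_1(\eta_{A_1}(y))$, and there is no reason these agree --- for instance $\gamma_0$ could map $A_0\cap[x]_F$ into the predecessor class in a $\prec$-increasing way while $\gamma_1$ maps $A_1\cap[x]_F$ in a $\prec$-decreasing or scrambled way. Avoiding the tie-breaking clause does nothing to reconcile the two block orders, so $Y'$ need not be monotonic for $(<,<'_{\gamma_1})$.

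The missing content is precisely the work the paper does: starting from an $F$-monotonic, class-wise large section $B$ for $(<,<'_{\gamma_0})$, one splits off the non-$\mathbb{Z}$ part by a selector, and on the remaining parts $X_+$, $X_-$ one uses the fact that $\gamma_0$ is $\prec$-monotone on $[x]_F\cap B$ as a reference scale: one measures the displacement of $\gamma_1(\eta_{A_1}(x))$ against this scale (the finite sets $I^{\pm}_{x,y}$ and the count $n(x)$), passes to the sub-section where $n$ is minimal, shows that the graph $R$ of $<$-versus-$\gamma_1$ inversions is then locally finite, and takes a Borel maximal $R$-anticlique to obtain a genuinely new $F$-monotonic complete section for $(<,<'_{\gamma_1})$. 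Your proposal replaces this construction with the assertion that the two orders already coincide after a sparsification, which is false; the local finiteness of $G_0$ and $G_1$ that you invoke is not the relevant local finiteness. To repair the argument you would need some version of the displacement-minimization plus anticlique step rather than a common refinement of $Y$.
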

\begin{proof} Suppose $<$ and $<'_{\gamma_0}$ are compatible. Let $A_0=\dom(\gamma_0)$, $A_1=\dom(\gamma_1)$, $\eta_0=\eta_{A_0}$ and $\eta_1=\eta_{A_1}$. Let $B\subseteq \full(X)$ be a Borel $F$-monotonic complete section for $<$ and $<'_{\gamma_0}$. By Proposition~\ref{prop:completesection} (4), we may assume that $B\subseteq \dom(\gamma_0)$ and that every $x\in\full(X)$ is $E$-class-wise large in $B$.

Let $X_0$ be the set of all $x\in \full(X)$ such that either $<\upharpoonright ([x]_F\cap B)$ or $<'_{\gamma_0}\upharpoonright ([x]_F\cap B)$ is not order-isomorphic to $\mathbb{Z}$. $X_0$ is a Borel $F$-invariant subset of $\full(X)$. On $X_0$ there is a Borel selector $\sigma$, from which we get a Borel $F$-monotonic complete section for $<$ and $<'_{\gamma_1}$. For each $x\in \full(X)\setminus X_0$, both $<\upharpoonright ([x]_F\cap B)$ and $<'_{\gamma_0}\upharpoonright ([x]_F\cap B)$ are order-isomorphic to $\mathbb{Z}$.

Let
$$\begin{array}{l}X_+=\{x\in \full(X)\setminus X_0\colon \forall a, b\in [x]_F\cap B\ (a<b\iff a<'_{\gamma_0}b)\}, \\
X_-=\{x\in \full(X)\setminus X_0\colon \forall a, b\in [x]_F\cap B\ (a<b\iff b<'_{\gamma_0}a)\}.
\end{array}
$$
Then $X_+$ and $X_-$ are both $F$-invariant Borel sets.

For any $x\in X_+$ and $y\in [\gamma_0(\eta_0(x))]_F$, define
$$\begin{array}{l} I^+_{x,y}=\{z\in [x]_F\cap B\colon x<z \mbox{ and } \gamma_0(\eta_0(z))<y\}, \\
I^-_{x,y}=\{z\in [x]_F\cap B\colon z<x \mbox{ and } y<\gamma_0(\eta_0(z))\}.
\end{array}
$$
Since $x\in X_+$, $\gamma_0$ is increasing on $[x]_F\cap B$, and thus both $I^+_{x,y}$ and $I^-_{x,y}$ are finite, and at least one of them is empty. For $x\in X_+$, let
$$ n(x)=|I^+_{x,\gamma_1(\eta_1(x))}\cup I^-_{x,\gamma_1(\eta_1(x))}|. $$
Let $a=\min\{n(y)\colon yFx\}$ and
  $$X'_+=\{x\in X_+\colon n(x)=a\}. $$
Then $X'_+$ is a Borel complete section of $X_+$ for $F$. Define a binary relation $R$ on $X'_+$ by
$$\begin{array}{rcl} R(x,y)&\iff& (x<y \mbox{ and } \gamma_1(\eta_1(y))<\gamma_1(\eta_1(x))) \mbox{ or } \\
& & (y<x \mbox{ and } \gamma_1(\eta_1(x))<\gamma_1(\eta_1(y))).
\end{array}
$$
$R$ is a Borel graph on $X'_+$. We claim that $R$ is locally finite. To see this, consider $x,y\in X'_+$ with $R(x,y)$. Without loss of generality assume that $x< y$ and $\gamma_1(\eta_1(y))<\gamma_1(\eta_1(x))$. We first show that there are only finitely many $z\in B$ such that $x< z< y$. Consider any such $z\in B$. Note that if $\gamma_0(\eta_0(z))<\gamma_1(\eta_1(y))$, then $\gamma_0(\eta_0(z))<\gamma_1(\eta_1(x))$, and thus $z\in I^+_{x,\gamma_1(\eta_1(x))}$. Similarly if $\gamma_1(\eta_1(x))<\gamma_0(\eta_0(z))$ then $\gamma_1(\eta_1(y))<\gamma_0(\eta_0(z))$ and $z\in I^-_{y,\gamma_1(\eta_1(y))}$.  Therefore there are at most $2a$ such $z$. This in turn implies that there are only finitely many $y$ satisfying our assumption. Thus $R$ is locally finite as claimed.

Let $C_+$ be a Borel maximal $R$-anticlique. Then $C_+$ is Borel $F$-monotonic complete section of $X_+$ for $<$ and $<'_{\gamma_1}$.

A similar construction can be done on $X_-$ to obtain a Borel $F$-monotonic complete section $C_-$ of $X_-$ for $<$ and $<'_{\gamma_1}$.

We have thus shown that $<$ and $<'_{\gamma_1}$ are compatible.
\end{proof}

Now we are ready for the main theorem of this section.

\begin{theorem}\label{thm:selfcomp}
	If $E$ is a hyperfinite-over-hyperfinite equivalence relation on a standard Borel space $X$ and $E$ is generated by a Borel class-wise $\mathbb{Z}^2$-ordering which is self-compatible, then $E$ is hyperfinite.
\end{theorem}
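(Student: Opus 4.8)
The plan is to reduce to the full part, strip the $\mathbb{Z}^2$-structure down to a hyperfinite base $F$ equipped with a monotone ``shift by one $F$-class'' map coming from self-compatibility, and then run a two-dimensional marker/tiling construction of the kind used to show that orbit equivalence relations of $\mathbb{Z}^2$-actions are hyperfinite.

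First I would apply Proposition~\ref{nonfull}: $E$ is hyperfinite on the (Borel, $E$-invariant) non-full part, so it suffices to handle $E\upharpoonright\full(X)$; assume $X=\full(X)$. Now $F$ is a Borel hyperfinite equivalence relation carrying the two Borel class-wise $\mathbb{Z}$-orders $<$ and $<'_\gamma$, and self-compatibility is exactly the statement that these are compatible. Applying Proposition~\ref{prop:completesection} to $(F,<,<'_\gamma)$ with the complete section $\dom(\gamma)$, I get an $F$-class-wise large Borel $F$-monotonic complete section $X'\subseteq\dom(\gamma)$; shrinking $X'$ suitably I may also assume the relevant nearest-point maps are finite-to-one on each $F$-class. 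The payoff is a clean structural statement: since $\eta_A(x)=x$ for $x\in\dom(\gamma)=A$, on $C\cap X'$ (for $C$ an $F$-class) the order $<'_\gamma$ collapses to the comparison of $\gamma(x)$ and $\gamma(y)$ under $\prec$, so $F$-monotonicity of $X'$ says precisely that $\gamma\upharpoonright(C\cap X')$ is either strictly $<$-increasing or strictly $<$-decreasing as a map into the $\prec$-predecessor class $C^-$ of $C$, the orientation being a Borel function of $C$. Setting $g=\eta_{X'}\circ\gamma\colon X'\to X'$ I then have a Borel finite-to-one map sending $C\cap X'$ weakly monotonically into $C^-\cap X'$ (nearest-point projection onto a sub-linear-order is order-preserving), so that $g^k$ moves a point down by $k$ $F$-classes in the $\prec$-order of $F$-classes, $X'$ meets every $F$-class of each $E$-class, and consecutive $F$-classes within an $E$-class are linked by $g$.

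Next I would reduce $E$ to $E\upharpoonright X'$: since $X'$ is an $F$-complete section and $F\subseteq E$, it is a Borel $E$-complete section, and picking a Borel $f\colon X\to X'$ with $f(x)\in[x]_E\cap X'$ gives a Borel reduction of $E$ to $E\upharpoonright X'$; hence by Theorem~\ref{thm:hyp}(i) it is enough to prove $E\upharpoonright X'$ hyperfinite. The core of the argument is to build an increasing sequence $E\upharpoonright X'=\bigcup_n E_n$ of finite Borel equivalence relations by a genuinely two-dimensional construction. Write $F\upharpoonright X'=\bigcup_n F_n$ with each $F_n$ a finite Borel equivalence relation whose classes are $<$-intervals, and let $\delta(x,y)$ be the number of $F$-classes lying strictly $\prec$-between $[x]_F$ and $[y]_F$, a Borel pseudometric that is constant on $F$-classes. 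Using a metric marker lemma for $\delta$, cut the $\mathbb{Z}$-line of $F$-classes of each $E$-class into finite blocks growing with $n$; cut the $<$-direction inside each $F$-class by $F_n$; and let $x\mathrel{E_n}y$ hold when $[x]_F$ and $[y]_F$ lie in the same block and, after projecting the higher of $x,y$ down to the level of the lower via the appropriate power of $g$, the results are $F_n$-related. The essential use of self-compatibility is that $g$ is monotone on each $F$-class: since $g$ carries $<$-intervals to $<$-intervals, the $F_n$-cutting and the $\delta$-cutting can be coordinated by choosing the interval lengths of $F_n$ large relative to the $\delta$-diameter of the $n$-th blocks, which is what keeps each $E_n$-class finite and makes the $E_n$ genuinely increasing and exhaustive.

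The main obstacle is precisely this coordination. Without self-compatibility the map ``$\gamma$ followed by nearest point'' need not be monotone on any complete section, the two cuttings then fight one another, and the cells $E_n$ become infinite --- which one must expect, since unconditional hyperfiniteness here is the open Hyperfinite-over-Hyperfinite Problem. Carrying out the construction therefore reduces to verifying that each $E_n$ is an equivalence relation with finite classes and that $\bigcup_n E_n=E\upharpoonright X'$, and essentially all the work lies in pinning down the interaction of the marker scale in the $\delta$-direction, the interval lengths of the $F_n$, and the finitely many orientations of $g$ across a single block. An alternative route is to first normalize $g$ into an honest bijection between consecutive $F$-classes along a suitable complete section (absorbing its finite fibers), thereby realizing $E$ restricted to that section as the orbit equivalence relation of a Borel $\mathbb{Z}^2$-action, which is hyperfinite by the Borel form of Weiss's theorem; the difficulty there is performing the normalization while staying Borel.
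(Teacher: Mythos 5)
Your setup matches the paper's: pass to the full part, observe that self-compatibility plus Proposition~\ref{prop:completesection} yields a Borel $F$-complete section $X'\subseteq\dom(\gamma)$ on which $\gamma$ is $<$-monotone into the predecessor class, and try to exploit the resulting ``descend one $F$-class'' map $g=\eta_{X'}\circ\gamma$. But the core of your argument --- the two-dimensional marker/tiling construction --- has a genuine gap, and it is exactly the part where all the difficulty of the theorem lives. Your relation $E_n$ (``same block, and after pushing down by the appropriate power of $g$ the results are $F_n$-related'') is not transitive: $g$ is a given finite-to-one map with no uniform bound on its horizontal distortion, so it need not map $F_n$-intervals into $F_n$-intervals, and $g^{j-l}$ need not carry the $F_n$-class of $y$ near that of $g^{j-l}(y)$. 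If you instead take the generated equivalence relation, its classes can be infinite even in a two-level block: for instance, with length-$2$ intervals on two consecutive $\mathbb{Z}$-lines and the monotone injective map $g(2k)=6k-5$, $g(2k+1)=6k$, every pair of consecutive upstairs intervals hits a common downstairs interval, so the transitive closure merges all of them. Choosing the $F_n$-interval lengths ``large relative to the $\delta$-diameter of the blocks'' does not help, because the obstruction is the unbounded expansion of $g$ along each $F$-class, not the number of $F$-classes per block; and in a Borel setting you cannot align the $F_n$-cutting with $g$ uniformly across all classes without essentially solving the problem. Note also that $g$ is not part of a $\mathbb{Z}^2$-action (it is non-surjective and non-injective after composing with $\eta_{X'}$), so the $\mathbb{Z}^2$ marker technology you invoke does not apply as stated; your fallback of ``normalizing $g$ to a bijection'' is likewise not possible in general --- distinct forward $g$-orbits inside one $E$-class need never merge and need not admit a Borel bijective selection.

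The paper's proof goes a different way at precisely this point: it encodes the $g$-dynamics into tail equivalence, setting $\phi(x)=(x,(\gamma\circ\eta)(x),(\gamma\circ\eta)^2(x),\dots)$ and using the Kechris--Louveau/Dougherty--Jackson--Kechris theorem (Theorem~\ref{thm:hyp}(iii)) that a countable relation Borel reducible to $E_t(X)$ is hyperfinite. On the part where $\phi$ is not a reduction it pulls back $E_t$ to a hyperfinite $E'$, passes to the section $S$ of $<$-least points of $E'$-classes, and analyzes the induced map $\alpha$: when $\alpha$ is a bijection on a complete section one genuinely gets a $\mathbb{Z}^2$-action and invokes Weiss's theorem (this is the only place your ``normalization'' picture is valid), and otherwise one uses $\alpha^{-\infty}$ and the map $\psi$ to build a second tail-equivalence reduction; a final two-fold cover handles decreasing versus increasing $\gamma$. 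Your proposal stops short of any of this, and explicitly defers ``verifying that each $E_n$ is an equivalence relation with finite classes,'' which, as the example above shows, is not a verification but the missing idea. Some smaller points (finite-to-oneness of $\eta_{X'}$ on classes where $X'$ is not coinitial/cofinal, i.e. the paper's $X_0$/$Y$ part) are also skipped but are fixable; the marker construction itself is not.
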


\begin{proof} Because $E$ is hyperfinite on the non-full part of $X$, it suffices to show that $E$ is hyperfinite on $\full(X)$.
	We continue to use the notation developed in the above discussions, in particular the equivalence relation $F$, the partial Borel injection $\gamma$, and the Borel class-wise $\mathbb{Z}$-orderings $<$ and $<_{\gamma}'$ on $\full(X)$ which generate $F$. Let $A=\dom(\gamma)$. Using Proposition \ref{prop:completesection}, let $B\subseteq A$ be a Borel $F$-monotonic complete section of $\full(X)$ such that every $x\in\full(X)$ is $E$-class-wise large in $B$. Now if we consider $\gamma'=\gamma\upharpoonright B$, then $<'_{\gamma'}=<_{\gamma}'$ on $B$ and thus $B$ is still a Borel $F$-monotonic complete section of $\full(X)$ with the stated properties with $<'_{\gamma'}$ replacing $<_{\gamma}'$. Thus we may assume $A=B$, $\gamma=\gamma'$ and omit the subscription and simply write that $<'=<'_{\gamma'}$. Let $\eta=\eta_A=\eta_B$.
	
		

	Let $X_0$ be the set of all $x\in \full(X)$ such that either $<\upharpoonright ([x]_F\cap B)$ or $<'\upharpoonright([x]_F\cap B)$ is not order-isomorphic to $\mathbb{Z}$. Let $Y=[X_0]_E$. Then $X_0$ is a Borel $F$-invariant subset of $\full(X)$ and $Y$ is a Borel $E$-invariant subset of $\full(X)$. We claim that $E\upharpoonright Y$ is hyperfinite. To see this, let $\sigma\colon X_0\to X_0$ be a Borel selector for $F$. By the Feldman--Moore Theorem, $\sigma$ can be extended to $Y$, which we still denote as $\sigma$. Then the set $C=\{x\in Y\colon \sigma(x)=x\}$ is a Borel complete section of $E$ on $Y$, and $\prec$ on $C$ is a Borel class-wise $\mathbb{Z}$-ordering. Thus $E\upharpoonright C$ is hyperfinite, and by \cite[Proposition 5.2(4)]{DJK}, $E\upharpoonright Y$ is hyperfinite.

Now let $X_1=\full(X)\setminus Y$. Then for any $x\in X_1$, both $<\upharpoonright([x]_F\cap B)$ and $<'\upharpoonright([x]_F\cap B)$ are order-isomorphic to $\mathbb{Z}$. Define $\phi\colon X_1\to X_1^{\omega}$ by $$\phi(x)=(x,(\gamma\circ \eta)(x),(\gamma\circ \eta)^2(x),\cdots). $$
$\phi$ is clearly Borel. Consider the tail equivalence relation $E_t(X_1)$. We split $X_1$ into two parts:
$$\begin{array}{l}
		X_2=\left\{x\in X_1 \colon \forall y\in[x]_E\ (\phi(x)E_t(X_1)\phi(y))\right\}, \\
		X_3=X_1\setminus X_2.
\end{array}
$$
In other words, $X_2$ is precisely the part of $X_1$ on which $\phi$ reduces $E$ to $E_t(X_1)$. By the theorems of Kechris--Louveau and Dougherty--Jackson--Kechris (Proposition~\ref{thm:hyp} (iii)), $E\upharpoonright X_2$ is hyperfinite.
	
	Now, we focus on $X_3$, which is a Borel $E$-invariant subset of $X_1$. Let $E'=(\phi\times\phi)^{-1}(E_t(X_1))$ be the pullback of $E_t(X_1)$. Then $E'$ is hyperfinite. Note that for any $x,y,z\in X_3$, if $x<y<z$ and $xE'z$ then $xE'yE'z$. Thus the $E'$-classes within an $F$-class consist of intervals in the $<$-order. Let $Z$ be the set of $x\in X_3$ such that there is an infinite $E'$-class within $[x]_F$. Then $Z$ is a Borel $F$-invariant subset of $X_3$, and there is a Borel selector for $F$ on $Z$. A similar argument as the above for $E\upharpoonright Y$ shows that $E\upharpoonright Z$ is hyperfinite. Thus we assume without loss of generality that $Z=\varnothing$, i.e., all $E'$-classes within an $F$-class in $X_3$ are finite.

Let
$$S=\{x\in X_3\colon \forall y<x\ (y,x)\not\in E'\}.$$
Then $S$ is a Borel complete section of $E$ on $X_3$ such that for any $x\in X_3$, $<\upharpoonright ([x]_F\cap S)$ is order-isomorphic to $\mathbb{Z}$. By \cite[Proposition 5.2(4)]{DJK}, we only need to prove that $E\upharpoonright S$ is hyperfinite.

Note that for any $x\in X_3$, $[x]_F\cap [x]_{E'}\cap S$ is a singleton. Therefore, for every $x\in S$ there is a unique $y\in S$ so that $(\gamma\circ\eta)(x)E'y$. Let $\alpha\colon S\to S$ denote this map $x\mapsto y$. Then $\alpha$ is Borel. We note that $\alpha$ is an injection. In fact, if $x,y\in S$ such that $\alpha(x)=\alpha(y)$, then $xFy$ and $xE'y$, hence $x=y$. For every $x\in S$, we define $\alpha^{-\infty}(x)$ to be, if it exists, the unique $z\in S$ so that $z\notin\range(\alpha)$ and there is $n\geq 0$ with $\alpha^n(z)=x$. If such $z$ does not exist we leave $\alpha^{-\infty}(x)$ undefined. $\alpha^{-\infty}$ is a partial Borel function.
	
	Consider the case where $\alpha$ is a bijection on a Borel complete section $S'\subseteq S$ for $E\upharpoonright S$, i.e., $\alpha\colon S'\to S'$ is a bijection from $S'$ onto $S'$. Again, we may assume that for any $x\in S'$, $<\upharpoonright ([x]_F\cap S')$ is order-isomorphic to $\mathbb{Z}$, since otherwise there is a Borel selector for its $F$-class and we deal with such points by a similar argument as that for $E\upharpoonright Y$.  Now we can define a Borel action of the additive group $\mathbb{Z}^2$ on $S'$ by letting $(1,0)\cdot x=\alpha(x)$ and $(0,1)\cdot x=\min_<\{y\in S'\colon y>x\}$. One readily checks that this is indeed a $\mathbb{Z}^2$-action that generates $E\upharpoonright S'$. By a theorem of Weiss (\cite{DJK}), $E\upharpoonright S'$ is hyperfinite, and it follows from \cite[Proposition 5.2(4)]{DJK} that $E\upharpoonright S$ as well as $E\upharpoonright X_3$ is hyperfinite.
	
	So we further focus on $E$-classes in which $\alpha$ is not a bijection on any Borel complete section for $E\upharpoonright S$. Note that for any $x\in S$, if there is $y\in[x]_E$ such that $\alpha^{-\infty}(y)$ is undefined, then we can in a Borel way produce a subset of $[x]_E$ on which $\alpha$ is a bijection. In other words, we consider the part $$S'=\{x\in S\colon \forall y\in[x]_E\ (y\in S\rightarrow y\in \dom(\alpha^{-\infty}))\}.$$
For every $x\in S'$ define
$$\psi(x)=\left\{\begin{array}{ll}\alpha^{-1}(x), & \mbox{ if $x\in \range(\alpha)$,} \\
\alpha^{-1}(\textstyle{\max_{\prec}}\{y\in \range(\alpha)\colon y<x\}), &\mbox{ otherwise}.
\end{array}\right.$$
Then $\psi(x)$ is well defined for every $x\in S'$ and is Borel.
	
To finish the proof in the case that $\gamma$ is increasing on every $F$-class, we claim that for every pair $x,y\in S'$ with $xEy$, we have that $$(x,\psi(x),...,\psi^n(x),...)E_t(S')(y,\psi(y),...,\psi^m(y),...). $$
To see this, fix such a pair $x,y$. Either there is $n\geq 0$ so that $\alpha^n(x)Fy$ or there is $n\geq 0$ so that $\alpha^n(y)Fx$. Without loss of generality we assume that $\alpha^n(x)Fy$ for some $n\in\mathbb{N}$. Also without loss of generality assume that $\alpha^n(x)<y$. Then for each $k\geq 0$, $\psi^k(\alpha^n(x))$ and $\psi^k(y)$ are in the same $F$-class and $\psi^k(\alpha^n(x))\leq \psi^k(y)$. For each $k\geq 0$, let $N_k$ denote the number of elements of $z\in S'$ so that $\psi^k(\alpha^n(x))<z\leq \psi^k(y)$. When $N_k=0$ we have that $\psi^k(\alpha^n(x))=\psi^k(y)$. Now observe that
\begin{enumerate}
\item[(a)] if $\psi^k(y)\in \range(\alpha)$, then $N_{k+1}\leq N_k$;
\item[(b)] if $\psi^k(y)\notin \range(\alpha)$, then $N_{k+1}< N_k$ if $N_k>0$.
\end{enumerate}
Since $x, y\in S'$ we conclude that case (b) must happen as $k$ increases, and therefore for some large enough $k$, $N_k=0$.
	
Now we extend this result to the general case where $\gamma$ is monotonic on every $F$-class. Consider an equivalence relation $F'$ on $X_3\times\{0,1\}$ defined as $$(x,i)F'(y,j)\iff xFy \mbox{ and } i=j. $$
Define a Borel partial order $\lhd$ on $X_3\times \{0,1\}$ by
$$\begin{array}{rcl} (x,i)\lhd (y,j)&\iff& (x\prec y \mbox{ and } (x,y)\not\in F) \mbox{ or } \\
& & (xFy\mbox{ and } i<j) \mbox{ or }\\
& & (xFy\mbox{ and } x<y\mbox{ and }i=0) \mbox{ or } \\
& & (xFy\mbox{ and }y< x\mbox{ and }i=1).
\end{array}$$
Then $\lhd$ is a Borel class-wise $\mathbb{Z}^2$-ordering on $X_3\times\{0,1\}$. Let $E_\lhd$ be the equivalence relation generated by $\lhd$.

Define $\gamma'$ on $X_3\times\{0,1\}$ by
$$
		\gamma'(x,i)=\left\{\begin{array}{ll}
			(\gamma(x),i), & \mbox{if $\gamma$ is increasing on $[x]_E$,}\\
			(\gamma(x),1-i), & \mbox{otherwise.}
\end{array}\right.
$$	
Let $E_{\gamma'}$ be the equivalence relation generated by $F'$ together with the map $\gamma'$, i.e., $E_{\gamma'}$ is the symmetric and transitive closure of the union of $F'$ and the graph of $\gamma'$. Then $\lhd\cap E_{\gamma'}$ is still a Borel class-wise $\mathbb{Z}^2$-ordering on $X_3\times\{0,1\}$ for $E_{\gamma'}$. Let $\ll$ and $\ll'$ be the induced class-wise $\mathbb{Z}$-ordering on $X_3\times\{0,1\}$ for $F'$. Then $\gamma'$ is increasing with respect to $\ll$ and $\ll'$ on every $F'$-class. By the above argument for the increasing case, $E_{\gamma'}$ is hyperfinite.

Now note that $E_{\gamma'}\subseteq E_\lhd$ and each $E_\lhd$ contains exactly two $E_{\gamma'}$-classes. Hence by \cite[Proposition 1.3(vii)]{DJK}, $E_\lhd$ is hyperfinite. Now $x\mapsto (x,0)$ is a natural Borel embedding of $X_3$ into $X_3\times\{0,1\}$ which is a reduction of $E$ to $E_\lhd$. Thus $E$ is hyperfinite.
\end{proof}

\begin{remark}
	Note that if $E$ is hyperfinite-over-hyperfinite witnessed by a $\mathbb{Z}^2$-ordering $<$, and is indeed hyperfinite, then the $\mathbb{Z}$-ordering $<'$ generating it is trivially self-compatible. Moreover, if we restrict this $<'$ to each $F$-class ($F$ as defined from $<$ in the proof) and order $F$-classes still as in $<$, the resulting ordering is now a $\mathbb{Z}^2$-ordering under which the order of $F$-classes is unchanged and is now self-compatible. So Theorem \ref{thm:selfcomp} actually provides a satisfactory and necessary condition in a strong sense.
\end{remark}

\begin{remark}
	It is still possible that a non-self-compatible $\mathbb{Z}^2$ order generates a hyperfinite Borel equivalence relation. For example, recall $<_0$ and $<_1$ we defined in Section \ref{sec:maindich} are incompatible. Then, let $\gamma(x)(2n)=x(2n+1)$ and $\gamma(x)(2n+1)=1-x(2n+2)$. Define $x<y$ if there is $n>0$ so that $\gamma^n(x)E_0y$ or $x<_0 y$. $<$ is non-self-compatible, since its restriction to each $F$-class is $<_0$ and its pull-back by $\gamma$ is $<_1$. However, we can see that the pull-back of $<$ by $\gamma^2$ is compatible with $<$, so by a similar argument in the last step of the proof of Theorem \ref{thm:selfcomp} the equivalence relation generated by $<$ is still hyperfinite.  
\end{remark}

\bibliographystyle{amsplain}

\thebibliography{999}

\bibitem{BJ07}
C. M. Boykin and S. Jackson. \textit{Borel boundedness and the lattice rounding property}. In
Advances in logic, volume 425 of Contemp. Math., 113–-126. Amer. Math. Soc., Providence, RI,
2007.

\bibitem{CK}
R. Chen and A.S. Kechris, \textit{Structurable equivalence relations}, Fund. Math. 242 (2018), no. 2, 109--185.

\bibitem{CFW}
A. Connes, J. Feldman, and B. Weiss, \textit{An amenable equivalence relation is generated by a single transformation}, Ergodic Theory Dynam. Systems 1 (1981), 431--450.

\bibitem{DJK}
R. Dougherty, S. Jackson, and A.S. Kechris, \textit{The structure of hyperfinite Borel equivalence relations}, Trans. Amer. Math. Soc. 341(1994), no. 1, 193--225.

\bibitem{E1}
E. Effros, \textit{Transformation groups and $C^*$-algebras}, Ann. Math (2) 81 (1965), 38--55.

\bibitem{FM}
J. Feldman and C.C. Moore, \textit{Ergodic equivalence relations, cohomology and von Neumann algebras. I}, Trans. Amer. Math. Soc. 234 (1977), 289--324.

\bibitem{FriShinVid}
J. Frisch, F. Shinko and Z. Vidnyanszky, \textit{Hyper-hyperfiniteness and complexity}, preprint, 2024, arXiv: 2409.16445.

\bibitem{GaoBook}
S. Gao, Invariant Descriptive Set Theory, CRC Press, Boca Raton, FL, 2009.

\bibitem{HKL}
L. Harrington, A. S. Kechris and A. Louveau, \textit{A Glimm-Effros dichotomy for Borel equivalence relations}, J. Amer. Math. Soc. 3 (1990), no. 4, 903--928.

\bibitem{HMS}
L. Harrington, D. Marker and S. Shelah,
\textit{Borel orderings}, Trans. Amer. Math. Soc. 310 (1988), no. 1, 293--302.

\bibitem{HK97}
G. Hjorth and A. S. Kechris, \textit{New dichotomies for Borel equivalence relations}, Bull. Symb. Log. 3 (1997), no. 3, 329--346.

\bibitem{JKL2002}
S. Jackson, A. S. Kechris and A. Louveau, \textit{Countable Borel equivalence relations}, J. Math. Log. 2 (2002), no. 1, 1--80.

\bibitem{Kanovei1997}
V. Kanovei, \textit{When a partial Borel order is linearizable}, Fund. Math. 155 (1998), no. 3, 301--309.

\bibitem{Kechris1991}
A.S. Kechris, \textit{Amenable equivalence relations and Turing degrees}, J. Symb. Log. 56 (1991), no. 1, 182--194.	

\bibitem{Kechris1995}
A.S. Kechris, Classical Descriptive Set Theory, Springer--Verlag, New York, 1995.

\bibitem{KL}
A.S. Kechris and A. Louveau,
\textit{The classification of hypersmooth equivalence relations}, J. Amer. Math. Soc. 10 (1997), no. 1, 215--242.

\bibitem{KST1999}
A.S. Kechris, S. Solecki and S. Todorcevic, \textit{Borel Chromatic Numbers} , Adv. Math. 141 (1999), no. 1, 1--44.

\bibitem{Moschovakis}
Y. Moschovakis, Descriptive Set Theory: Second Edition, Amer. Math. Soc., Providence, RI, 2009.

\bibitem{Thornton}
R. Thornton, \textit{$\Delta^1_1$ effectivization in Borel combinatorics}, preprint, 2021, arXiv: 2105.04063.

\bibitem{SlSt}
T. Slaman and J. Steel, \textit{Definable functions on degrees}, in Cabal Seminar 81-85, 37--55. Lecture Notes in Mathematics 1333, Springer-Verlag, 1988.

\bibitem{Weiss}
B. Weiss, \textit{Measure dynamics}, in Conference in Modern Analysis and Probability (New Haven, Conn., 1982), 395--421. Contemp. Math. 26 (1984).
\end{document}